\newtheorem{theorem}{Theorem}
\newtheorem{corollary}[theorem]{Corollary}
\newtheorem{proposition}[theorem]{Proposition}
\theoremstyle{remark}
\theoremstyle{definition}
\numberwithin{theorem}{section} 
\numberwithin{equation}{section}
\numberwithin{example}{section}
\title{On ranks and cranks of partitions modulo $4$ and $8$}
\author{Eric T. Mortenson}
\begin{document}

\date{31 July 2017}

\subjclass[2000]{11B65, 11F11, 11F27, 11P83}

\keywords{partitions, rank, crank, mock theta functions}

\begin{abstract}  
Denote by $p(n)$ the number of partitions of $n$ and by $N(a,M;n)$ the number of partitions of $n$ with rank congruent to $a$ modulo $M$.  By considering the deviation
\begin{equation*}
D(a,M) := \sum_{n= 0}^{\infty}\left(N(a,M;n) - \frac{p(n)}{M}\right) q^n,
\end{equation*}
we give new proofs of recent results of  Andrews, Berndt, Chan, Kim and Malik on mock theta functions and ranks of partitions.  
By considering deviations of cranks, we give new proofs of Lewis and Santa-Gadea's rank-crank identities.  We revisit ranks and cranks modulus $M=5$ and $7$, with our results on cranks appearing to be new.   We also demonstrate how considering deviations of ranks and cranks gives first proofs of Lewis's conjectured identities and inequalities for rank-crank differences of modulus $M=8$.   
\end{abstract}

\email{etmortenson@gmail.com}
\maketitle
\setcounter{section}{-1}

\section{Notation}
Let $q$ be a complex number with $0<|q|<1$ and define $\mathbb{C}^*:=\mathbb{C}-\{0\}$.  We recall:
 \begin{gather*}
(x)_n=(x;q)_n:=\prod_{i=0}^{n-1}(1-q^ix), \ \ (x)_{\infty}=(x;q)_{\infty}:=\prod_{i\ge 0}(1-q^ix),\\
 {\text{and }}\ \ j(x;q):=(x)_{\infty}(q/x)_{\infty}(q)_{\infty}=\sum_{n}(-1)^nq^{\binom{n}{2}}x^n,
\end{gather*}
where in the last line the equivalence of product and sum follows from Jacobi's triple product identity.  Let $a$ and $m$ be integers with $m$ positive.  Define
\begin{gather*}
J_{a,m}:=j(q^a;q^m), \ \ \overline{J}_{a,m}:=j(-q^a;q^m), \ {\text{and }}J_m:=J_{m,3m}=\prod_{i\ge 1}(1-q^{mi}).
\end{gather*}

\section{Introduction}
We recall a universal mock theta function
\begin{equation}
g(x;q):=x^{-1}\Big ( -1 +\sum_{n=0}^{\infty}\frac{q^{n^2}}{(x)_{n+1}(q/x)_{n}} \Big ).\label{equation:g-def}
\end{equation}

One of the earliest celebrated results in the history of mock theta functions was Hickerson's proof of the mock theta conjectures, that express fifth order mock theta functions $f_0(q)$ and $f_1(q)$ in terms of the universal mock theta function $g(x;q)$:
\begin{theorem} \cite{H1} The following identities are true:
\begin{align*}
f_0(q)&:=\sum_{n=0}^{\infty}\frac{q^{n^2}}{(-q;q)_n}=-2q^2g(q^2;q^{10})+\frac{J_{5,10}J_{2,5}}{J_1},\\
f_1(q)&:=\sum_{n=0}^{\infty}\frac{q^{n^2+n}}{(-q;q)_n}=-2q^3g(q^4;q^{10})+\frac{J_{5,10}J_{1,5}}{J_{1}}.
\end{align*}
\end{theorem}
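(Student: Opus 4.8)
The plan is to recast both mock theta conjectures as identities among Appell--Lerch sums and ordinary theta functions, where by an Appell--Lerch sum I mean
\[
m(x,q,z) := \frac{1}{j(z;q)}\sum_{r}\frac{(-1)^r q^{\binom{r}{2}}z^r}{1-q^{r-1}xz}.
\]
Two inputs feed the argument. On the right-hand sides, $g(x;q)$ is, up to an explicit theta quotient, an Appell--Lerch sum with base $q^3$; hence $g(q^2;q^{10})$ and $g(q^4;q^{10})$ are such $m$'s with base $q^{30}$. On the left-hand sides, $f_0(q)$ and $f_1(q)$ are, up to elementary theta factors, generating functions for rank differences $N(a,5;n)-N(b,5;n)$ --- the Atkin--Swinnerton-Dyer picture that originally motivated the conjectures --- and those differences are in turn governed by the deviations $D(a,5)$, which decompose into Appell--Lerch sums (of base $q^{30}$) together with theta quotients. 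Equivalently, one may bypass ranks altogether and start from the classical Hecke-type, i.e.\ Bailey-pair, double-sum expansions of $f_0$ and $f_1$, then apply the structure theorem of Hickerson and Mortenson that collapses any indefinite double sum of this type into a single $m$ plus finitely many theta quotients.

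With both sides of each identity written as one $m(x,q,z)$ plus a finite sum of theta quotients, I would use the freedom in the auxiliary variable $z$. The decisive fact is that
\[
m(x,q,z_1)-m(x,q,z_0) = \frac{z_0\,J_1^3\,j(z_1/z_0;q)\,j(xz_0z_1;q)}{j(z_0;q)\,j(z_1;q)\,j(xz_0;q)\,j(xz_1;q)}
\]
is itself a pure theta quotient, so by choosing $z$ on the two sides appropriately the two Appell--Lerch pieces can be made literally equal. Each mock theta conjecture then collapses to a single identity among the $J_{a,m}$ with moduli dividing $30$, with no mock objects left.

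These residual theta identities can be finished in either of two ways: recognize both sides, after clearing denominators, as holomorphic modular forms of small weight on some $\Gamma_0(N)$ and compare $q$-expansions up to the Sturm bound; or prove them by hand from the addition theorem for $j(x;q)$, the quintuple product identity, and standard relations among the $J_{a,5}$ and $J_{a,10}$. I expect the real obstacle to lie not in these last steps but in the bookkeeping that precedes them: getting the rank-difference (or Hecke-type) expansions of $f_0$ and $f_1$ into exactly the shape the structure theorem accepts, and then tuning the $z$-parameters so that the Appell--Lerch pieces cancel outright rather than leaving a stray theta quotient that must afterwards be reconciled with the target. The displayed difference formula for $m(x,q,z_1)-m(x,q,z_0)$ is precisely the accounting tool that makes that tuning manageable.
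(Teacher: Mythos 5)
The paper does not prove this theorem at all: it is quoted verbatim from Hickerson \cite{H1} as motivation in the introduction, so there is no in-paper proof to compare against. Judged on its own terms, your outline is a coherent sketch of a genuine (and by now standard) proof strategy --- essentially the Appell--Lerch-sum machinery of Hickerson and Mortenson \cite{HM1}, which is the same toolkit the present paper runs on. The ingredients you cite are all real and correctly stated: $g(x;q)$ does decompose into $m(\cdot,q^3,\cdot)$'s, so $g(q^2;q^{10})$ and $g(q^4;q^{10})$ live at base $q^{30}$; the Andrews Hecke-type (Bailey-pair) double sums for $f_0,f_1$ do fall under the structure theorem of \cite{HM1}; and the displayed formula for $m(x,q,z_1)-m(x,q,z_0)$ is exactly the tool that lets one cancel the mock pieces and reduce to a theta identity, finishable by the quintuple product or by a Sturm-bound check. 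This is a genuinely different route from Hickerson's original 1988 argument, which manipulated the indefinite theta series directly with bespoke product identities rather than isolating a universal Appell--Lerch ``mock part''; what your route buys is modularity of the argument (the structure theorem does the heavy lifting once and for all), at the cost of substantial bookkeeping. Two cautions. First, tuning $z$ only works after the $x$- and $q$-arguments of the $m$'s on the two sides already agree, which requires the symmetries $m(x,q,z)=x^{-1}m(x^{-1},q,z^{-1})$ and $m(q^kx,q,z)=(-x)^{-k}q^{-\binom{k}{2}}m(x,q,q^kz)+(\text{theta})$ in addition to the $z$-difference formula; this is exactly where the work hides, as you acknowledge. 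Second, the rank-difference detour you mention is the one place circularity threatens: the $5$-dissection of the rank generating function in terms of $g$ is itself a theorem of comparable depth (it is \cite{HM2}, proved with the same Appell--Lerch machinery), so the Hecke-type route should be regarded as primary and the rank picture as a corollary rather than an input.
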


Mock theta functions and the study of partitions are inextricably linked.  A partition of a positive integer $n$ is a weakly-decreasing sequence of positive integers whose sum is $n$.  For example the partitions of the number $4$ are $(4)$, $(3,1)$, $(2,2)$, $(2,1,1)$, $(1,1,1,1)$.  We denote the number of partitions of $n$ by $p(n)$.   Among the most famous results in the theory of partitions are Ramanujan's congruences:
\begin{subequations}
\begin{equation}
p(5n+4)\equiv 0 \pmod 5,\notag
\end{equation}
\begin{equation}
p(7n+5)\equiv 0 \pmod 7,\notag
\end{equation}
\begin{equation}
p(11n+6)\equiv 0 \pmod {11}.\notag
\end{equation}
\end{subequations}

To study Ramanujan's partition congruences, Dyson constructed a function which assigns an integer value to a partition.  Dyson defined the {\em rank of a partition} to be the largest part minus the number of parts.  As an example, the ranks of the five partitions of $4$ are $3, 1, 0, -1, -3$, respectively, giving an equinumerous distribution of the partitions of $4$ into the five residue classes mod $5$.    We further define
\begin{equation*}
N(a,M;n) := \mathrm{number\ of\ partitions\ of\ } n \mathrm{\ with\ rank} \equiv a \ (\mathrm{mod\ } M),
\end{equation*} 
which has the symmetric property $N(a,M,n)=N(M-a,M;n)$.   To explain Ramanujan's first two congruences, Dyson conjectured and Atkin and Swinnerton-Dyer proved \cite{ASD, D}
\begin{align*}
N(a,5;5n+4)&=p(5n+4)/5,   \ \textup{for}\  0\le a\le4, \\
N(a,7;7n+5)&=p(7n+5)/7,  \ \textup{for}\  0\le a\le6.
\end{align*}
For more identities on ranks modulo $M=5$ or $7$ see \cite{D}, \cite[$(2.2)$--$(2.11)$]{ASD}.   For analogous results for other low moduli $M$, see \cite{LSG, NSG2}.

Although the rank does not explain Ramanujan's third congruence, Dyson conjectured another function, which he called the {\em crank}, that would divide the partitions of $11n+6$ into eleven equal classes.  Andrews and Garvan later discovered the crank \cite{AG}.   For a partition $\pi$, let $\lambda(\pi)$ denote the largest part, $\nu(\pi)$ the number of ones, and $\mu(\pi)$ the number of parts larger than $\nu(\pi)$.  The crank of $\pi$, denoted $c(\pi)$, is defined as follows
\begin{align*}
c(\pi):=\begin{cases}
\lambda(\pi), & \textup{when }  \nu(\pi)=0,\\
\mu(\pi)-\nu(\pi), & \textup{otherwise}.
\end{cases}
\end{align*}
The cranks of the five partitions of $4$ are $4,0,2,-2,-4$, respectively, giving an equinumerous distribution of the partitions of $4$ into the five residue classes mod $5$.  Defining
\begin{equation*}
C(a,M;n) := \mathrm{number\ of\ partitions\ of\ } n \mathrm{\ with\ crank} \equiv a \ (\mathrm{mod\ } M),
\end{equation*} 
Andrews and Garvan showed
\begin{align*}
C(a,5;5n+4)&=p(5n+4)/5,  \ \textup{for}\  0\le a\le4, \\
C(a,7;7n+5)&=p(7n+5)/7, \ \textup{for}\ 0\le a\le6,\\
C(a,11;11n+6)&=p(11n+6)/11, \ \textup{for}\ 0\le a\le10.
\end{align*}

Ranks and cranks are related. We point out that we do not consider ranks or cranks of the partition of zero.  Lewis and Santa-Gadea proved identities such as \cite[$(8)$--$(17)$]{LSG}:
\begin{subequations}
\begin{equation}
N(2,4;2n)=C(1,4;2n),\label{equation:NC-8}
\end{equation}
\begin{equation}
N(0,4;2n+1)=C(1,4;2n+1),\label{equation:NC-9}
\end{equation}
\begin{equation}
C(1,8;4n)=C(3,8;4n)=N(2,8;4n)=N(4,8;4n),\label{equation:NC-10}
\end{equation}
\begin{equation}
\begin{split}
C&(0,8;4n+1)+C(1,8;4n+1)
=C(3,8;4n+1)+C(4,8;4n+1)\\
&=N(1,8;4n+1)+N(2,8;4n+1)
=N(3,8;4n+1)+N(4,8;4n+1),\label{equation:NC-11}
\end{split}
\end{equation}
\begin{equation}
C(1,8;4n+2)
=C(3,8;4n+2)
=N(0,8;4n+2)
=N(2,8;4n+2),\label{equation:NC-12}
\end{equation}
\begin{equation}
\begin{split}
C&(0,8;4n+3)+C(1,8;4n+3)
=C(3,8;4n+3)+C(4,8;4n+3)\\
&=N(0,8;4n+3)+N(1,8;4n+3)
=N(2,8;4n+3)+N(3,8;4n+3),\label{equation:NC-13}
\end{split}
\end{equation}
\begin{equation}
N(3,8;4n)=C(2,8;4n),\label{equation:NC-14}
\end{equation}
\begin{equation}
N(3,8;4n+1)=C(2,8;4n+1),\label{equation:NC-15}
\end{equation}
\begin{equation}
N(1,8;4n+2)=C(2,8;4n+2),\label{equation:NC-16}
\end{equation}
\begin{equation}
N(1,8;4n+3)=C(2,8;4n+3).\label{equation:NC-17}
\end{equation}
\end{subequations}

Andrews, Berndt, Chan, Kim and Malik \cite{ABCKM} recently proved results on mock theta functions and partitions and found results analogous to works of Dyson and Atkin and Swinnerton-Dyer but for modulus $M=4$ and $8$.  They showed  \cite[$(7.5)$, $(7.6)$]{ABCKM}
\begin{align}
N(0,4;2n)-N(2,4;2n)&=(-1)^n\big [ N(0,8;2n)-N(4,8;2n)\big ],\label{equation:ABCKM-id7.5}\\
 N(0,4;2n+1)-N(2,4;2n+1)&=(-1)^n\big [ N(0,8;2n+1)+N(1,8;2n+1)\label{equation:ABCKM-id7.6}\\
 &\ \ \ \ \ \ \ \ \ \ -2N(3,8;2n+1)-N(4,8;2n+1)\big ].\notag 
\end{align}

\noindent Identities (\ref{equation:ABCKM-id7.5}) and (\ref{equation:ABCKM-id7.6}) follow from their two (slightly rewritten) main theorems:
\begin{theorem}\cite[Theorem $1.6$]{ABCKM}  \label{theorem:ABCKM-Thm1.6} We have
\begin{align}
\sum_{n=0}^{\infty}\Big ( N(0,4;n)&-N(2,4;n)\Big )q^{n}\label{equation:ABCKM-Thm1.6} \\
&= 2-2q^2g(-q^2;q^{16}) +2q^5g(-q^6;q^{16})
-\frac{J_{2,4}\overline{J}_{6,16}}{J_4}+q\frac{J_{2,4}\overline{J}_{2,16}}{J_4}.\notag
\end{align}
\end{theorem}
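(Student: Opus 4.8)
The plan is to compute the rank generating function
\begin{equation*}
R(z;q):=\sum_{n=0}^{\infty}\sum_{m}N(m,n)z^mq^n=\sum_{n=0}^{\infty}\frac{q^{n^2}}{(zq;q)_n(q/z;q)_n},
\end{equation*}
where $N(m,n)$ is the number of partitions of $n$ with rank $m$, and to extract the combination $N(0,4;n)-N(2,4;n)$ by averaging over fourth roots of unity. Since $N(0,4;n)-N(2,4;n)=\frac12\sum_{m}(N(m,n)(i^m+(-i)^m)-\text{(the }i^2\text{ part)})$, more precisely $N(0,4;n)-N(2,4;n)=\frac{1}{4}\sum_{j=0}^{3}(1-(-1)^j)\zeta^{0}\cdots$ — I would use the standard fact that $N(0,4;n)-N(2,4;n)$ is the coefficient extraction at $z=i$ and $z=-i$, namely
\begin{equation*}
\sum_{n\ge0}\bigl(N(0,4;n)-N(2,4;n)\bigr)q^n=\tfrac12\bigl(R(i;q)+R(-i;q)\bigr).
\end{equation*}
By the symmetry $R(z;q)=R(z^{-1};q)$ and $R(-i;q)=R(i;q)$ (since $-i=i^{-1}$), this is just $R(i;q)$. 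So the whole theorem reduces to a closed-form evaluation of $R(i;q)$.

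The next step is to invoke a known Hecke-type or Appell–Lerch expansion for $R(z;q)$ at roots of unity. The standard tool here is the identity expressing the rank generating function in terms of the universal mock theta function $g(x;q)$ together with theta quotients — specifically a formula of the shape $R(z;q)=\frac{(q)_\infty}{j(z;q)}\cdot(\text{something})$ or, more usefully, the Appell–Lerch decomposition due to work going back to Watson and formalized in the Hickerson–Mortenson framework:
\begin{equation*}
R(z;q)=\frac{1-z}{?}+\text{(theta quotient)}\cdot\bigl(\text{sum of two }g\text{'s}\bigr).
\end{equation*}
Concretely, I expect to use the known specialization at $z=\zeta$ a primitive fourth root of unity, which expresses $R(i;q)$ in terms of $g$ at arguments $\pm q^2$, $\pm q^6$ with base $q^{16}$. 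The presence of modulus $16$ in the statement (arguments $q^{16}$, subscripts like $\overline J_{6,16}$, $\overline J_{2,16}$, and $J_{2,4}$, $J_4$) strongly signals that one takes the general Hecke-type double sum for $R(z;q)$, sets $z=i$, and splits the resulting double sum over residue classes modulo $4$ (or $8$), each piece collapsing to an Appell–Lerch sum $m(\cdot,\cdot,\cdot)$ which in turn equals $g$ at the relevant argument plus a theta correction term via the fundamental relation $g(x;q)=-x^{-1}m(\cdots)$ and the change-of-variable lemmas for $m$.

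The key steps, in order: (1) write $R(i;q)$ as a Hecke-type double sum using the classical transformation of $\sum q^{n^2}/((zq)_n(q/z)_n)$ — this is where one brings in the Bailey-pair or Durfee-square dissection; (2) dissect the double sum according to the exponent's residue mod $8$ so that the quadratic form becomes a sum of indefinite theta pieces with modulus $16$; (3) identify each indefinite theta piece with an Appell–Lerch function $m(x,q,z)$ and use the standard identity $m(x,q,z)=-x^{-1}\cdot(\text{part of }g)+\text{theta quotient}$, i.e. relate $m$ back to $g(\pm q^2;q^{16})$ and $g(\pm q^6;q^{16})$; (4) collect the theta-quotient remainders and simplify them to $-\frac{J_{2,4}\overline J_{6,16}}{J_4}+q\frac{J_{2,4}\overline J_{2,16}}{J_4}$ using Jacobi triple product manipulations and standard theta identities (splitting $J_{2,4}$, combining $\overline J$'s modulo $16$), plus track the constant term $2$ coming from the $n=0$ normalization and the $z^{-1}(-1+\cdots)$ shape of $g$. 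The main obstacle I anticipate is step (4): the theta-function bookkeeping — correctly matching signs, powers of $q$, and reindexing the many $J_{a,m}$ and $\overline J_{a,m}$ symbols after the residue dissection, so that everything telescopes to exactly the two-term theta quotient claimed, rather than a superficially different but equal expression. A secondary subtlety is justifying the rearrangement/convergence of the Hecke-type double sums and the precise form of the $m$-to-$g$ dictionary at a root of unity where some denominators $j(z;q)$ could a priori vanish; one sidesteps this by working with the regularized Appell–Lerch sums throughout and only specializing at the end.
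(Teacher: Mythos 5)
Your opening reduction is correct and is in fact equivalent to the paper's starting point: since $\sum_n q^{n^2}/((xq)_n(q/x)_n)=(1-x)(1+xg(x;q))$, your identity $\sum_n(N(0,4;n)-N(2,4;n))q^n=\tfrac12(R(i;q)+R(-i;q))$ is exactly the specialization of \eqref{equation:rankdeviant-def} to $D(0,4)-D(2,4)$, where only the $j=1,3$ terms survive and one is left with $1+\tfrac{1+i}{2}g(i;q)+\tfrac{1-i}{2}g(-i;q)$. From there, however, your proposal and the paper diverge, and your version has a genuine gap: the entire analytic content of the theorem is the passage from $g(\pm i;q)$ to $g(-q^2;q^{16})$ and $g(-q^6;q^{16})$, and you never produce the identity that accomplishes it. Your key formulas are placeholders (``$R(z;q)=\frac{1-z}{?}+\cdots$'', ``a formula of the shape''), and steps (1)--(4) --- the Hecke-type double-sum representation, the residue dissection, the $m$-to-$g$ dictionary, and the theta bookkeeping --- are all deferred rather than executed. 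You also leave the signs undetermined (``$g$ at arguments $\pm q^2$, $\pm q^6$''), whereas the theorem requires specifically $g(-q^2;q^{16})$ and $g(-q^6;q^{16})$; pinning down those signs is not cosmetic, it is where the proof lives.

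For contrast, the paper's mechanism is not the Appell--Lerch/Hecke-double-sum machinery you invoke but the lost notebook $4$-dissection of the universal mock theta function, Proposition \ref{proposition:gsplit} and its Corollary \ref{corollary:rootsof1}: these express $g(x;q)\pm g(-x;q)$ in terms of $g$ at base $q^4$ plus explicit theta quotients, and applying them twice (base $q\to q^4\to q^{16}$, at the roots of unity $\pm1,\pm i$ and then at $\pm q,\pm iq$) produces exactly the $g(-q^2;q^{16})$, $g(-q^6;q^{16})$ terms and the theta quotients, which are then collapsed to $-J_{2,4}\overline{J}_{6,16}/J_4+qJ_{2,4}\overline{J}_{2,16}/J_4$ via \eqref{equation:jsplit} and elementary product rearrangements. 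Your proposed route via Hecke-type double sums is viable in principle (it is essentially the \cite{HM2} framework the author mentions as the general method), but as written it is an outline of a strategy, not a proof: to complete it you would need to supply the explicit double-sum representation of $R(i;q)$, the explicit Appell--Lerch evaluations at the relevant arguments, and the verification that the theta remainders reduce to the stated two-term quotient.
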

\begin{theorem} \cite[Theorem $1.7$]{ABCKM} \label{theorem:ABCKM-Thm1.7} We have
\begin{align}
\sum_{n= 0}^{\infty}\Big ( N(0,8;n)-N(4,8;n)\Big )q^{n} 
&=2+2q^2g(q^2;q^{16})-\frac{\overline{J}_{2,4}  J_{6,16}}{J_4}
+q\frac{\overline{J}_{2,4}J_{2,16}}{J_4}, \label{equation:ABCKM-Thm1.7A} \\
\sum_{n= 0}^{\infty}\Big ( N(1,8;n)-N(3,8;n)\Big )q^{n}
&=-1-q^2g(q^2;q^{16})+q^5g(q^6;q^{16})
+\frac{\overline{J}_{2,4}J_{6,16}}{J_4}. \label{equation:ABCKM-Thm1.7B}
\end{align}
\end{theorem}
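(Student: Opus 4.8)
The plan is to express the rank-generating differences as specializations of a two-variable rank function and then extract the relevant residues via roots of unity. Recall that the two-variable generating function for ranks is
\[
R(z;q):=\sum_{n\ge 0}\sum_{m\in\Z}N(m,n)z^mq^n=\sum_{n\ge 0}\frac{q^{n^2}}{(zq;q)_n(z^{-1}q;q)_n},
\]
where $N(m,n)$ counts partitions of $n$ with rank $m$. Setting $z=\zeta$ a primitive $M$-th root of unity picks out the residue classes: $\sum_n\big(N(0,M;n)-N(1,M;n)+\cdots\big)q^n$ and similar alternating combinations are linear combinations of the $R(\zeta;q)$. For $M=8$ one takes $\zeta=\zeta_8$ (a primitive $8$-th root of unity) and, to isolate $N(0,8;n)-N(4,8;n)$ and $N(1,8;n)-N(3,8;n)$ separately, one combines $R(\zeta_8^j;q)$ for the various $j$, using the symmetry $N(a,M;n)=N(M-a,M;n)$; the purely real/imaginary parts separate the two quantities in \eqref{equation:ABCKM-Thm1.7A}--\eqref{equation:ABCKM-Thm1.7B}.

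The key step is then to evaluate $R(\zeta_8;q)$ in closed form. Here one invokes the now-standard Hickerson--Mortenson-type formula expressing $R(\zeta;q)$, for $\zeta$ a root of unity, in terms of the universal mock theta function $g(x;q)$ together with theta quotients: schematically
\[
R(\zeta;q)=\text{(theta quotient)}+\text{(sum of terms }c\cdot q^{a}\,g(\zeta^{b}q^{c};q^{d})\text{)}.
\]
Specializing to $\zeta=\zeta_8$ and $M=8$ produces arguments of $g$ of the form $\pm q^2,\pm q^6$ at base $q^{16}$, matching exactly the right-hand sides of \eqref{equation:ABCKM-Thm1.7A} and \eqref{equation:ABCKM-Thm1.7B}. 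Throughout, the products that appear must be rewritten in the $J_{a,m}$, $\overline J_{a,m}$ notation, which is routine bookkeeping with Jacobi's triple product identity and the elementary addition/duplication rules for theta functions $j(x;q)$.

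I expect the main obstacle to be the second step: organizing the root-of-unity specialization so that the mock-theta pieces collapse to precisely two $g$-terms per identity, with the remaining (potentially numerous) partial-theta or false-theta contributions assembling into the single clean theta quotient shown. This requires careful use of the modular transformation/functional equations for $g(x;q)$ (the relations $g(x;q)=x^{-1}g(x^{-1};q)+\cdots$ and its $q\to q^n$ dilations) to fuse half of the terms into the other half, and the bookkeeping is delicate because sign choices in $\zeta_8$ interact with the $\pm$ in $\overline J$ versus $J$. Once $R(\zeta_8;q)$ and $R(\zeta_8^3;q)$ are in hand, deducing \eqref{equation:ABCKM-Thm1.7A}--\eqref{equation:ABCKM-Thm1.7B} is a short linear-algebra step over $\Q(\zeta_8)$, and then \eqref{equation:ABCKM-id7.5}--\eqref{equation:ABCKM-id7.6} follow by comparing \eqref{equation:ABCKM-Thm1.6} with \eqref{equation:ABCKM-Thm1.7A}--\eqref{equation:ABCKM-Thm1.7B} and reading off coefficients of $q^n$ according to the parity of $n$.
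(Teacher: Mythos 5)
Your overall strategy coincides with the paper's: formula \eqref{equation:rankdeviant-def} is exactly the statement that $R(\zeta_8^j;q)=(1-\zeta_8^j)\big(1+\zeta_8^j g(\zeta_8^j;q)\big)$, the two series in the theorem are the differences $D(0,8)-D(4,8)$ and $D(1,8)-D(3,8)$ of rank deviations, and the paper likewise reduces everything to evaluating $g$ at eighth roots of unity by means of the lost-notebook $2$-dissection (Proposition \ref{proposition:gsplit} and Corollary \ref{corollary:rootsof1}), applied twice to pass from base $q$ to base $q^4$ and then to base $q^{16}$; this is why only $g(\pm q^2;q^{16})$ and $g(\pm q^6;q^{16})$ survive. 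So the skeleton is right, and your ``short linear-algebra step over $\Q(\zeta_8)$'' is indeed how the paper isolates the two combinations.

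The genuine gap sits precisely at the point you flag as ``the main obstacle'' and then defer. After the root-of-unity bookkeeping one does \emph{not} land on the single quotients $\overline{J}_{2,4}J_{6,16}/J_4$ and $q\,\overline{J}_{2,4}J_{2,16}/J_4$; one lands on two-term combinations such as $\overline{J}_{4,8}J_{6,16}+q^2\overline{J}_{0,8}J_{14,16}$ and $\overline{J}_{0,8}J_{6,16}-\overline{J}_{4,8}J_{14,16}$ (see Section \ref{section:ABCKM-Thm1.7}). Fusing each of these into a single theta product is not ``routine bookkeeping with Jacobi's triple product and addition/duplication rules'': the paper needs the three-term relations \eqref{equation:hecke-phi-id} and \eqref{equation:hecke-phi-id-alt}, specialized at $x\mapsto -1$ resp.\ $x\mapsto 1$ with $q\mapsto -q^2$, followed by \eqref{equation:1.11}. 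The second of these relations is not quoted from the literature but proved in the paper by the zero-counting principle of Proposition \ref{proposition:H1Thm1.7}: the left-hand side $f(x)$ of \eqref{equation:hecke-phi-id-alt} satisfies $f(q^8x)=q^{-9}x^{-3}f(x)$ and vanishes at the four points $x=-1,-q,-q^2,-q^3$, hence vanishes identically. Without supplying an identity of this type (or an equivalent device) your argument stalls exactly where you predict it will; the steps before and after that point are sound.
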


In this note we will demonstrate how methods and results from our work on mock theta functions and Dyson's ranks \cite{HM2,M1} can be used to prove results on ranks and cranks of partitions such as those found in \cite{ABCKM, LSG, L1}. We define the deviation of the ranks from the expected value as
\begin{equation}
D(a,M) = D(a, M;q) := \sum_{n= 0}^{\infty}\left(N(a,M;n) - \frac{p(n)}{M}\right) q^n,\label{equation:rd-def}
\end{equation}
and the deviation of the cranks from the expected value as
\begin{equation}
D_C(a,M)=D_C(a,M;q) := \sum_{n=0}^{\infty}\left(C(a,M;n) - \frac{p(n)}{M}\right) q^n. \label{equation:cd-def}
\end{equation}

By determining the dissections of the relevant deviations it becomes straightforward to prove identities such as
 $(\ref{equation:ABCKM-Thm1.6})$--$(\ref{equation:ABCKM-Thm1.7B})$ and $(\ref{equation:NC-8})$--$(\ref{equation:NC-17})$.   For a preview, using notation
\begin{equation}
\begin{split}
\vartheta_4&(a_0,a_1,a_2,a_3)\\
&:=\frac{1}{4J_4}\Big [ 
a_0\cdot \overline{J}_{4,8}\overline{J}_{6,16}
+a_1\cdot q^2 \overline{J}_{0,8}\overline{J}_{14,16}
+a_2\cdot q  \overline{J}_{4,8}\overline{J}_{14,16}
+a_3\cdot q  \overline{J}_{0,8}\overline{J}_{6,16}\Big ], 
\end{split}
\end{equation}
and
\begin{equation}
G_4(b_0,b_1):=b_0\cdot (-1+q^2g(-q^2;q^{16}))+b_1\cdot q^5g(-q^6;q^{16}),
\end{equation}
we will show the following two theorems which will prove $(\ref{equation:ABCKM-Thm1.6})$, $(\ref{equation:NC-8})$, and $(\ref{equation:NC-9})$.
\begin{theorem} \label{theorem:D4-deviants} We have the following $2$-dissections:
\begin{align}
D(0,4)&=\vartheta_4(-5,3,1,1)+ 2\cdot G_4(-1,0),\label{equation:D04-deviant-alt}\\ 
D(1,4)=D(3,4)&= \vartheta_4(3,-1,-3,1)+G_4(1,1), \label{equation:D14-deviant-alt}\\
D(2,4)&=\vartheta_4(-1,-1,5,-3)+2\cdot G_4(0,-1).\label{equation:D24-deviant-alt}
\end{align}
\end{theorem}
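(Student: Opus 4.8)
The plan is to realize each deviation $D(a,4)$ as a linear combination of ``building block'' sums to which the universal mock theta function $g(x;q)$ and the theta quotients $\vartheta_4$ are attached in a known way. Recall that the two-variable rank generating function is
\[
R(z;q):=\sum_{n\ge 0}\sum_{m}N(m,n)z^mq^n=\sum_{n\ge 0}\frac{q^{n^2}}{(zq;q)_n(z^{-1}q;q)_n},
\]
and that extracting residue classes mod $4$ via roots of unity gives, with $\zeta=i$ a primitive fourth root of unity,
\[
D(a,4)=\frac{1}{4}\sum_{j=1}^{3}\zeta^{-aj}R(\zeta^j;q),
\]
since the $j=0$ term contributes exactly $\sum p(n)q^n/4$, which is subtracted off in the definition \eqref{equation:rd-def}. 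Thus everything reduces to understanding $R(-1;q)$, $R(i;q)$, and $R(-i;q)$. The first is the classical object $\sum (-1)^m N(m,n)q^n$, whose Hecke-type / Appell--Lerch expansion is well documented; the values $R(\pm i;q)$ are exactly the ``$M=4$'' specializations that the author's earlier papers \cite{HM2,M1} treat, expressing $R(\zeta;q)$ for $\zeta$ a root of unity as an explicit combination of $g$-functions and theta quotients after a suitable modular substitution $q\mapsto q$ (here the appearance of $q^{16}$ inside $g$ signals that the relevant change of variables replaces $q$ by $q$ but forces a $16$-dissection of the theta pieces, collected into $\overline{J}_{*,8}$ and $\overline{J}_{*,16}$).

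Concretely, the steps I would carry out are: (i) write down the Appell--Lerch / partial-theta decomposition of $R(-1;q)$ and of $R(i;q)$ coming from \cite{HM2,M1}, the key identity being one of the form
\[
R(\zeta;q)=(\text{rational theta part})+(1-\zeta)\,m\text{-function}(\ldots),
\]
where the $m$-function is a normalized Appell--Lerch sum $m(x,q,z)=x^{-1}\sum_r (-1)^r q^{\binom r2}z^r/(1-q^{r-1}xz)$ and is related to $g$ by $g(x;q)=-x^{-1}+ \text{const}\cdot m(\,\cdot\,)$ up to elementary factors; (ii) substitute into $D(a,4)=\tfrac14\sum_j \zeta^{-aj}R(\zeta^j;q)$ and combine the three terms, using $R(-i;q)=\overline{R(i;\bar q)}$-type symmetry (equivalently $N(m,n)=N(-m,n)$, so $R(\zeta;q)=R(\zeta^{-1};q)$) to halve the work; (iii) apply the theta-function dissection identities — splitting each $J$-product according to the residue of the summation index mod the relevant modulus, then reassembling via the addition formulas for $j(x;q)$ (the standard ones expressing $j(x;q)$ in terms of $j(\pm x^{k}q^{\ell};q^{k^2m})$) — to bring the rational part into exactly the shape $\vartheta_4(a_0,a_1,a_2,a_3)$; (iv) collect the mock pieces into $g(-q^2;q^{16})$ and $g(-q^6;q^{16})$, i.e.\ into $G_4(b_0,b_1)$, and read off the coefficient vectors $(-5,3,1,1)$, $(3,-1,-3,1)$, $(-1,-1,5,-3)$ and $(-1,0)$, $(1,1)$, $(0,-1)$ for $a=0,1,2$ respectively; the symmetry $N(m,n)=N(-m,n)$ gives $D(1,4)=D(3,4)$ for free, and as a consistency check $D(0,4)+2D(1,4)+D(2,4)$ must equal $0$, which one verifies holds coefficientwise (e.g. $-5+2\cdot3-1=0$, $3-2\cdot1-1=0$, etc., and $-1+2\cdot1-1=0$, $0+2\cdot1-2=0$).

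The main obstacle I expect is step (iii): the bookkeeping of the theta dissections. Getting $R(-1;q)$ and $R(i;q)$ into $g$-plus-theta form is essentially a citation to \cite{HM2,M1}, and the root-of-unity averaging is formal; but the rational parts that emerge from those two sources are written over different moduli (typically over $J_2,J_4$ with arguments involving $q^8$) and must be forced into the common denominator $4J_4$ with numerators built only from $\overline J_{4,8},\overline J_{0,8},\overline J_{6,16},\overline J_{14,16}$. This requires repeatedly applying the quintuple-product-type and simple two-term theta splitting identities and carefully tracking powers of $q$ and signs; it is routine but error-prone, and it is where the specific integer vectors in \eqref{equation:D04-deviant-alt}--\eqref{equation:D24-deviant-alt} actually get pinned down. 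A secondary point to handle with care is the additive constant: the ``$-1$'' inside $G_4$ and the ``$2$'' at the front of \eqref{equation:D04-deviant-alt} come from the $n=0$ normalization (the empty partition contributes to $p(0)$ but has no rank), so I would isolate the constant term of each side at the very end and confirm the match separately rather than trusting the generating-function manipulations to track it.
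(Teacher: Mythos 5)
Your plan is sound and, at the level of architecture, coincides with the paper's proof: both start from the root-of-unity average \eqref{equation:rankdeviant-def}, both convert the values of $g$ at fourth roots of unity into $g(-q^2;q^{16})$ and $g(-q^6;q^{16})$ plus theta quotients, and both finish with two-term theta splittings via \eqref{equation:jsplit} to reach the $\vartheta_4$ normal form (the paper does this in two stages: Proposition \ref{proposition:D4-deviants} followed by the rewritings \eqref{equation:D4-rewrite-1}--\eqref{equation:D4-rewrite-3}). The one real divergence is your step (i): you propose to import Appell--Lerch ($m$-function) expansions of $R(\zeta;q)$ from \cite{HM2,M1} and sum them over roots of unity, which is indeed the general method used for odd moduli, but the paper deliberately avoids it here. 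Because $M=4$ is a power of two, it instead pairs the terms into $g(x;q)\pm g(-x;q)$ and applies Corollary \ref{corollary:rootsof1} (a consequence of the lost-notebook identity, Proposition \ref{proposition:gsplit}) \emph{twice}, passing from base $q$ to base $q^4$ and then from $q^4$ to $q^{16}$; this iteration, which your outline does not mention, is what actually produces the $q^{16}$ inside $G_4$ while keeping every intermediate object elementary. Two smaller points: the paired combinations involve the singular value $g(1;q)$, which the paper handles as limits $x\to 1$ of the theta quotients in Corollary \ref{corollary:rootsof1} (your formulation sums over $j=1,2,3$ only, which avoids $g(1;q)$ at first, but the pairing device reintroduces it); and, as you concede, your proposal stops short of the computation that pins down the integer vectors $(-5,3,1,1)$, $(3,-1,-3,1)$, $(-1,-1,5,-3)$ --- the sum-to-zero check you run is necessary but far from sufficient, so as written this is a correct strategy rather than a complete proof.
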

\begin{theorem}\label{theorem:DC4-deviants} We have the following $2$-dissections:
{\allowdisplaybreaks \begin{align}
D_C(0,4)&=\vartheta_4(3,-1,1,-3),\label{equation:DC04-deviant-final}\\
D_C(1,4)=D_C(3,4)&=\vartheta_4(-1,-1,1,1),\label{equation:DC14-deviant-final}\\
D_C(2,4)&=\vartheta_4(-1,3,-3,1).\label{equation:DC24-deviant-final}
\end{align}}%
\end{theorem}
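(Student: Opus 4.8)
The plan is to reduce the three crank deviations to the Andrews--Garvan crank generating function evaluated at fourth roots of unity, identify those special values as theta quotients, and then insert the classical $2$-dissections of $\varphi(-q)$ and $\psi(\pm q)$. First I would recall that if $M(m,n)$ denotes the number of partitions of $n$ with crank $m$ (with the standard conventions at $n=0,1$), then $\mathcal{C}(z)=\mathcal{C}(z;q):=\sum_{n\ge 0}\sum_m M(m,n)z^mq^n=(q)_\infty\big/\big((zq)_\infty(q/z)_\infty\big)$. Orthogonality of the characters of $\Z/4\Z$ gives $C(a,4;n)-p(n)/4=\tfrac14\sum_{j=1}^{3}i^{-aj}[q^n]\mathcal{C}(i^j)$, hence $D_C(a,4)=\tfrac14\big(i^{-a}\mathcal{C}(i)+(-1)^{a}\mathcal{C}(-1)+i^{a}\mathcal{C}(-i)\big)$. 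Since $\mathcal{C}(z)=\mathcal{C}(1/z)$ and $-i=1/i$, one has $\mathcal{C}(-i)=\mathcal{C}(i)$, so this collapses to
$$D_C(0,4)=\tfrac14\big(2\mathcal{C}(i)+\mathcal{C}(-1)\big),\qquad D_C(1,4)=D_C(3,4)=-\tfrac14\mathcal{C}(-1),\qquad D_C(2,4)=\tfrac14\big({-}2\mathcal{C}(i)+\mathcal{C}(-1)\big),$$
which in particular already accounts for the equality $D_C(1,4)=D_C(3,4)$.

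Next I would evaluate the two relevant specializations as theta quotients by elementary infinite-product manipulations: from $(iq)_\infty(-iq)_\infty=(-q^2;q^2)_\infty=1/(q^2;q^4)_\infty$ we get $\mathcal{C}(i)=(q)_\infty(q^2;q^4)_\infty=J_1J_2/J_4$, and from $(-q;q)_\infty=J_2/J_1$ we get $\mathcal{C}(-1)=(q)_\infty/(-q;q)_\infty^2=J_1^3/J_2^2$. In contrast to the rank generating function, the crank generating function is a genuine theta quotient, which is precisely why the crank dissections, unlike the rank dissections of Theorem~\ref{theorem:D4-deviants}, contain no $g$-terms.

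Then comes the computational core: the classical $2$-dissections obtained by splitting Jacobi's triple product according to the parity of the summation index,
$$\overline{J}_{4,8}-q\overline{J}_{0,8}=\varphi(-q)=\frac{J_1^2}{J_2},\qquad \overline{J}_{6,16}+q\overline{J}_{14,16}=\psi(q)=\frac{J_2^2}{J_1},\qquad \overline{J}_{6,16}-q\overline{J}_{14,16}=\psi(-q)=\frac{J_1J_4}{J_2},$$
where $\varphi(q)=\sum_{n}q^{n^2}$ and $\psi(q)=\sum_{n\ge 0}q^{n(n+1)/2}$. Multiplying the first identity by the second, and by the third, and dividing by $J_4$, gives exactly
$$\mathcal{C}(i)=\frac{J_1J_2}{J_4}=\frac{(\overline{J}_{4,8}-q\overline{J}_{0,8})(\overline{J}_{6,16}+q\overline{J}_{14,16})}{J_4},\qquad \mathcal{C}(-1)=\frac{J_1^3}{J_2^2}=\frac{(\overline{J}_{4,8}-q\overline{J}_{0,8})(\overline{J}_{6,16}-q\overline{J}_{14,16})}{J_4}.$$
Expanding each product into its four monomials and matching with the definition of $\vartheta_4$ yields $\mathcal{C}(i)=4\vartheta_4(1,-1,1,-1)$ and $\mathcal{C}(-1)=4\vartheta_4(1,1,-1,-1)$. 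Substituting these into the three formulas above and using that $\vartheta_4$ is linear in its arguments, one gets $D_C(0,4)=2\vartheta_4(1,-1,1,-1)+\vartheta_4(1,1,-1,-1)=\vartheta_4(3,-1,1,-3)$, $D_C(1,4)=D_C(3,4)=-\vartheta_4(1,1,-1,-1)=\vartheta_4(-1,-1,1,1)$, and $D_C(2,4)=-2\vartheta_4(1,-1,1,-1)+\vartheta_4(1,1,-1,-1)=\vartheta_4(-1,3,-3,1)$, which is the assertion.

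The work is not in any single deep step but in the bookkeeping behind the $2$-dissections above: one must present $\varphi(-q)$ as a combination of $q^8$-theta functions and $\psi(\pm q)$ as combinations of $q^{16}$-theta functions so that the products fall exactly into the span of the four monomials $\overline{J}_{4,8}\overline{J}_{6,16}$, $q^2\overline{J}_{0,8}\overline{J}_{14,16}$, $q\overline{J}_{4,8}\overline{J}_{14,16}$, $q\overline{J}_{0,8}\overline{J}_{6,16}$ that define $\vartheta_4$. The recognition that each $\vartheta_4$ occurring here factors as (a $q^8$-theta)$\,\times\,$(a $q^{16}$-theta) is what makes the identification with $\mathcal{C}(i)$ and $\mathcal{C}(-1)$ transparent; beyond that, the only remaining care is to fix the $n=0$ and $n=1$ crank conventions so that the constant and linear Taylor coefficients of $D_C(a,4)$ agree with those of the right-hand sides.
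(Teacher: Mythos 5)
Your proposal is correct and follows essentially the same route as the paper: both start from the orthogonality formula $(\ref{equation:crankdeviant-def})$, evaluate the crank generating function at $i$ and $-1$ as the eta-quotients $J_1J_2/J_4=J_{1,2}\overline{J}_{1,4}/J_4$ and $J_1^3/J_2^2=J_{1,2}J_{1,4}/J_4$, and then apply the splitting $(\ref{equation:jsplit})$ to land in the span of the four $\vartheta_4$ monomials. Your $\varphi(-q)$, $\psi(q)$, $\psi(-q)$ are exactly the paper's $J_{1,2}$, $\overline{J}_{1,4}$, $J_{1,4}$, and your coefficient bookkeeping matches $(\ref{equation:DC04-deviant-final})$--$(\ref{equation:DC24-deviant-final})$.
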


Identity $(\ref{equation:ABCKM-Thm1.6})$ follows from evaluating the difference $D(0,4)-D(2,4)$.   Given $(\ref{equation:g-def})$, we see that the Fourier expansions of $g(- q^{2};q^{16})$ and $g( -q^{6};q^{16})$ are supported on even powers of $q$.  Hence $b_0, a_0, a_1$ are coefficients of even $q$-powers and $b_1, a_2, a_3$ are coefficients of odd $q$-powers.  Comparing even powers of $q$ in $(\ref{equation:DC14-deviant-final})$ and $(\ref{equation:D24-deviant-alt})$ gives the first rank-crank relation $(\ref{equation:NC-8})$.  Relation $(\ref{equation:NC-9})$ follows from comparing odd powers of $q$ in $(\ref{equation:DC14-deviant-final})$ and $(\ref{equation:D04-deviant-alt})$.   In each theorem, the  deviations sum to zero.

In Section \ref{section:prelim}, we cover preliminaries.  In Section \ref{section:DM4-table}, we prove Theorem \ref{theorem:D4-deviants}.   In Section \ref{section:ABCKM-Thm1.6}, we prove Theorem \ref{theorem:ABCKM-Thm1.6}.  In Sections \ref{section:D8-prelim} and \ref{section:ABCKM-Thm1.7}, we prove Theorem \ref{theorem:ABCKM-Thm1.7}.   In Section \ref{section:DC4-deviants}, we prove Theorem \ref{theorem:DC4-deviants}.   In Section \ref{section:crank-prelims} and Section \ref{section:rank-crank}, we use our methods to prove identities $(\ref{equation:NC-10})$--$(\ref{equation:NC-17})$.  In Section \ref{section:rankscranks57}, we rewrite the dissections of rank deviations modulus $M=5$ and $7$ and obtain corresponding dissections for cranks, which appear to be new.  In the final section, we demonstrate how our results prove Lewis's conjectured identities and inequalities for rank-crank differences of modulus $8$ \cite{L1}.

\section{Preliminaries}\label{section:prelim}

For later use, we list useful product rearrangements:
{\allowdisplaybreaks \begin{gather*}
\overline{J}_{0,1}=2\overline{J}_{1,4}=\frac{2J_2^2}{J_1},\   \overline{J}_{1,2}=\frac{J_2^5}{J_1^2J_4^2}, \   J_{1,2}=\frac{J_1^2}{J_2},  \  \overline{J}_{1,3}=\frac{J_2J_3^2}{J_1J_6}, \\
 J_{1,4}=\frac{J_1J_4}{J_2}, \   J_{1,6}=\frac{J_1J_6^2}{J_2J_3},  \  \overline{J}_{1,6}=\frac{J_2^2J_3J_{12}}{J_1J_4J_6}.
\end{gather*}}%

\noindent We recall more theta function identities, here $\zeta_n$ is a primitive $n$-root of unity:
{\allowdisplaybreaks \begin{subequations}
\begin{equation}
j(qx^3;q^3)+xj(q^2x^3;q^3)={J_1j(x^2;q)}/{j(x;q)},\label{equation:H1Thm1.0}
\end{equation}
\begin{equation}
j(q x;q)=-x^{-1}j(x;q),\label{equation:1.8}
\end{equation}
\begin{equation}
j(x;q)=j(q/x;q),\label{equation:1.7}
\end{equation}
\begin{equation}
j(x;q)={J_1}j(x;q^2)j(qx;q^2)/{J_2^2},\label{equation:1.10}
\end{equation}
\begin{equation}
j(x;-q)={j(x;q^2)j(-qx;q^2)}/{J_{1,4}},\label{equation:1.11}
\end{equation}
\begin{equation}
j(z;q)=\sum_{k=0}^{m-1}(-1)^k q^{\binom{k}{2}}z^k
j\big ((-1)^{m+1}q^{\binom{m}{2}+mk}z^m;q^{m^2}\big ),\label{equation:jsplitgen}\\
\end{equation}
\begin{equation}
j(x^n;q^n)={J_n}j(x,\zeta_nx,\dots,\zeta_n^{n-1}x;q^n)/{J_1^n} \ \ {\text{if $n\ge 1$.}}\label{equation:1.12}
\end{equation}
\end{subequations}}%
\noindent Identity (\ref{equation:H1Thm1.0}) is the quintuple product identity.   A frequently used form of $(\ref{equation:jsplitgen})$ reads
\begin{subequations}
\begin{gather}
j(z;q)=j(-qz^2;q^4)-zj(-q^3z^2;q^4),\label{equation:jsplit}
\end{gather}
\end{subequations}

\begin{proposition} \cite[Theorems $1.1$-$1.2$]{H1}  For generic $x,y\in \mathbb{C}^*$ 
\begin{subequations}
\begin{equation}
j(x;q)j(y;q)=j(-xy;q^2)j(-qx^{-1}y;q^2)-xj(-qxy;q^2)j(-x^{-1}y;q^2),\label{equation:H1Thm1.1}
\end{equation}
\begin{equation}
j(-x;q)j(y;q)+j(x;q)j(-y;q)=2j(xy;q^2)j(qx^{-1}y;q^2).\label{equation:H1Thm1.2B}
\end{equation}
\end{subequations}
\end{proposition}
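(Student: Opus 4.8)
The plan is to prove \eqref{equation:H1Thm1.1} directly from the series form of Jacobi's triple product, and then to deduce \eqref{equation:H1Thm1.2B} from it by a short symmetrization. Since $0<|q|<1$, for generic $x,y\in\mathbb{C}^*$ the double series
\[
j(x;q)\,j(y;q)=\sum_{m,n\in\Z}(-1)^{m+n}q^{\binom m2+\binom n2}x^my^n
\]
converges absolutely, so one is free to regroup its terms.

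For \eqref{equation:H1Thm1.1} I would split this sum according to the parity of $m+n$ (recalling that $m+n$ and $m-n$ always have the same parity). On the even part, substitute $m=a+b$, $n=a-b$ with $a,b\in\Z$; then $(-1)^{m+n}=1$ and $\binom m2+\binom n2=2\binom a2+b^2$, so the even part factors as a product of two single theta sums,
\[
\Big(\sum_a(-1)^aq^{2\binom a2}(-xy)^a\Big)\Big(\sum_b(-1)^bq^{2\binom b2}(-qx/y)^b\Big)=j(-xy;q^2)\,j(-qx/y;q^2),
\]
and $j(-qx/y;q^2)=j(-qx^{-1}y;q^2)$ by \eqref{equation:1.7}. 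On the odd part, substitute $m=a+b+1$, $n=a-b$; then $(-1)^{m+n}=-1$, a factor $x$ comes out of $x^my^n$, the remaining exponent is $a^2+b^2+b$, and the sum factors into $-x\,j(-qxy;q^2)\,j(-q^2x/y;q^2)$, with $j(-q^2x/y;q^2)=j(-x^{-1}y;q^2)$ again by \eqref{equation:1.7}. Adding the even and odd parts gives \eqref{equation:H1Thm1.1}.

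To get \eqref{equation:H1Thm1.2B}, apply \eqref{equation:H1Thm1.1} once with $x$ replaced by $-x$ and once with $y$ replaced by $-y$ (simplifying the arguments with $(-x)^{-1}=-x^{-1}$), which yields
\[
j(-x;q)j(y;q)=j(xy;q^2)j(qx^{-1}y;q^2)+x\,j(qxy;q^2)j(x^{-1}y;q^2),
\]
\[
j(x;q)j(-y;q)=j(xy;q^2)j(qx^{-1}y;q^2)-x\,j(qxy;q^2)j(x^{-1}y;q^2);
\]
adding these, the second terms cancel and \eqref{equation:H1Thm1.2B} falls out. (Alternatively, \eqref{equation:H1Thm1.2B} can be proved just like \eqref{equation:H1Thm1.1}: expanding $j(-x;q)j(y;q)+j(x;q)j(-y;q)$, only the terms with $m\equiv n\pmod 2$ survive, and the same $m=a+b$, $n=a-b$ substitution collapses them to $2j(xy;q^2)j(qx^{-1}y;q^2)$.)

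The conceptual content is slight; the work is all bookkeeping. The one place to be careful is the odd-parity reindexing in \eqref{equation:H1Thm1.1}: the shift must be chosen so that $(a,b)$ still runs over all of $\Z^2$, and afterwards one must recognize $j(-qx/y;q^2)$ and $j(-q^2x/y;q^2)$ as the forms $j(-qx^{-1}y;q^2)$ and $j(-x^{-1}y;q^2)$ appearing in the statement, via the symmetry $j(z;q)=j(q/z;q)$. I expect that matching of equivalent theta-quotient forms, rather than any genuine difficulty, to be the only real obstacle.
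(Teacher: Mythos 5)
Your proposal is correct: the parity split of the double sum, the reindexings $m=a+b$, $n=a-b$ (resp.\ $m=a+b+1$, $n=a-b$), and the exponent bookkeeping all check out, and the deduction of \eqref{equation:H1Thm1.2B} from \eqref{equation:H1Thm1.1} via $x\mapsto -x$ and $y\mapsto -y$ is clean. The paper itself offers no proof of this proposition — it is quoted directly from Hickerson \cite{H1} — and your double-sum dissection is essentially the standard argument given in that source, so there is nothing to flag.
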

We recall the three-term Weierstrass relation for theta functions \cite[(1.)]{We}:
\begin{proposition}\label{proposition:Weierstrass-id} For generic $a,b,c,d\in \mathbb{C}^*$
\begin{equation}
j(ac,a/c,bd,b/d;q)=j(ad,a/d,bc,b/c;q)+b/c \cdot j(ab,a/b,cd,c/d;q).\label{equation:Weierstrass}
\end{equation}
\end{proposition}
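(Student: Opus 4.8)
The plan is to prove \eqref{equation:Weierstrass} by the classical Liouville-type argument of theta-function theory: fix $b,c,d$ and $q$ generic, and regard both sides of \eqref{equation:Weierstrass} as functions of $a$ on $\C^*$, calling their difference $F(a)$.

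First I would record the structural properties of the three products occurring in \eqref{equation:Weierstrass}. Each of them has the shape $j(a\alpha;q)\,j(a/\alpha;q)\cdot(\text{factor independent of }a)$, where $\alpha$ is $c$, $d$, or $b$; applying $j(qx;q)=-x^{-1}j(x;q)$ from \eqref{equation:1.8} twice shows that under $a\mapsto qa$ the $a$-dependent factor $j(a\alpha;q)\,j(a/\alpha;q)$ is multiplied by $a^{-2}$, with no further constant and independently of $\alpha$ (and the scalar $b/c$ in the third term is irrelevant). Since each $j(x;q)=\sum_n(-1)^nq^{\binom n2}x^n$ is holomorphic on $\C^*$, it follows that $F(a)$ is holomorphic on $\C^*$ and satisfies $F(qa)=a^{-2}F(a)$.

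Next I would use the standard consequence of this quasi-periodicity: a function holomorphic on $\C^*$ with $F(qa)=a^{-2}F(a)$ that is not identically zero has, by the argument principle over a fundamental annulus $\{1\le|a|\le|q|^{-1}\}$, exactly two zeros modulo $q^{\Z}$. The contradiction then comes from exhibiting three zeros. Putting $a=b$: the third term of \eqref{equation:Weierstrass} contains $j(1;q)=0$, while the first two terms become literally equal, so $F(b)=0$. Putting $a=c$: the first term contains $j(1;q)=0$, and the two surviving terms cancel once $j(c/b;q)$ is rewritten via \eqref{equation:1.7} and \eqref{equation:1.8} as $-(c/b)\,j(b/c;q)$. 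Putting $a=d$: the second term contains $j(1;q)=0$, and the two surviving terms cancel after rewriting $j(d/c;q)$ and $j(d/b;q)$ by the same reflection-and-shift. For generic $b,c,d$ these three points are pairwise distinct modulo $q^{\Z}$, so $F$ would have at least three zeros modulo $q^{\Z}$ unless $F\equiv 0$. Hence $F\equiv 0$ for generic parameters, and then \eqref{equation:Weierstrass} holds for all $a,b,c,d\in\C^*$ by analytic continuation.

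The main obstacle is the bookkeeping in the vanishing checks at $a=c$ and $a=d$: one must apply the reflection $j(x;q)=j(q/x;q)$ and the shift $j(qx;q)=-x^{-1}j(x;q)$ in precisely the right order so that the two surviving theta products coalesce (with the factor $b/c$ accounting for the mismatch), and one must keep the parameters generic enough that the three exhibited zeros are genuinely distinct modulo $q^{\Z}$. An alternative, more computational route would expand the pairs $j(ac;q)j(a/c;q)$, $j(ad;q)j(a/d;q)$, $j(ab;q)j(a/b;q)$ using \eqref{equation:H1Thm1.1} and reduce \eqref{equation:Weierstrass} to an identity among theta products to base $q^2$, but the Liouville argument above is shorter and self-contained.
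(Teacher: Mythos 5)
Your proof is correct; note that the paper itself gives no proof of Proposition \ref{proposition:Weierstrass-id} --- it is quoted as a classical identity with a citation to Weierstrass \cite{We} --- so there is no in-paper argument to compare against, and your Liouville-type argument is the standard one. The details check out: two applications of \eqref{equation:1.8} show each $a$-dependent block $j(a\alpha;q)j(a/\alpha;q)$ picks up exactly the factor $a^{-2}$ under $a\mapsto qa$, independently of $\alpha\in\{b,c,d\}$, so $F(qa)=a^{-2}F(a)$; the vanishing at $a=b$ is immediate from $j(1;q)=0$ plus the literal equality of the other two terms; and the cancellations at $a=c$ and $a=d$ both reduce to the single relation $j(1/y;q)=-y^{-1}j(y;q)$, which follows from \eqref{equation:1.7} and \eqref{equation:1.8} exactly as you indicate (at $a=c$ with $y=b/c$, at $a=d$ with $y=c/d$ and $y=d/b$, the prefactors combining to the $b/c$ in \eqref{equation:Weierstrass}). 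One streamlining: you need not rerun the argument principle yourself, since the paper's Proposition \ref{proposition:H1Thm1.7} (with $C=1$, $n=2$) already states that a not-identically-zero $F$ with this quasi-periodicity has exactly two zeros (with multiplicity) in $|q|<|a|\le 1$; exhibiting the three zeros $b,c,d$, pairwise distinct modulo $q^{\mathbb{Z}}$ for generic parameters, then forces $F\equiv 0$, and the identity extends to all of $(\mathbb{C}^*)^4$ by continuity since both sides are holomorphic in each variable there.
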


We recall a fact which follows from \cite[Lemma $2$]{ASD} and is also \cite[Theorem $1.7$]{H1}.
\begin{proposition}\label{proposition:H1Thm1.7} Let $C$ be a nonzero complex number, and let $n$ be a nonnegative integer.  Suppose that $F(z)$ is analytic for $z\ne 0$ and satisfies $F(qz)=Cz^{-n}F(z)$.  Then either $F(z)$ has exactly $n$ zeros in the annulus $|q|<|z|\le 1$ or $F(z)=0$ for all $z$.
\end{proposition}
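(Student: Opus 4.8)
The plan is to treat this as a standard application of the argument principle to a \emph{fundamental annulus} for the multiplicative action $z\mapsto qz$, with the functional equation supplying the relation between the integrals over the inner and outer boundary circles. Throughout I assume $F\not\equiv 0$ (the stated alternative), so that, $F$ being holomorphic on $\mathbb{C}^{*}$, its zero set $Z$ is discrete and hence finite in every closed sub-annulus.

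First I record two consequences of $F(qz)=Cz^{-n}F(z)$ with $C\neq 0$: the set $Z$ is invariant under $z\mapsto qz$ and $z\mapsto q^{-1}z$, and a zero $z_0$ and its translate $qz_0$ have equal multiplicity, since $Cz^{-n}$ is holomorphic and nonvanishing near $z_0\neq 0$. It follows that every $q$-orbit of zeros contributes the same total multiplicity to each half-open ``fundamental'' annulus $\{\,r_0<|z|\le r_0/|q|\,\}$, so the number of zeros in such an annulus, counted with multiplicity, is a quantity $N$ independent of $r_0$. In particular $N$ is the number of zeros in $\{|q|<|z|\le 1\}$, and it suffices to compute $N$ for a convenient radius.

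Next I pick $r$ with $F$ nonvanishing on $|z|=r$ (possible as $Z$ is discrete); by the invariance above $F$ is then also nonvanishing on $|z|=|q|r$, and the zeros counted by $N$ all lie in the open annulus $|q|r<|z|<r$. The argument principle gives
\[
N=\frac{1}{2\pi i}\Bigl(\oint_{|z|=r}\frac{F'(z)}{F(z)}\,dz-\oint_{|z|=|q|r}\frac{F'(z)}{F(z)}\,dz\Bigr),
\]
both circles positively oriented. In the second integral I substitute $z=qw$ (orientation preserving, sending $|z|=|q|r$ to $|w|=r$) and logarithmically differentiate $F(qw)=Cw^{-n}F(w)$ to get $q\,F'(qw)/F(qw)=F'(w)/F(w)-n/w$; this rewrites the second integral as $\oint_{|w|=r}\bigl(F'(w)/F(w)-n/w\bigr)\,dw$. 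The two copies of $\oint F'/F$ cancel and $\oint_{|w|=r}(n/w)\,dw=2\pi i n$, so $N=n$, as desired.

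The only genuinely delicate point is the bookkeeping in the second paragraph that lets one replace the half-open annulus $\{|q|<|z|\le 1\}$ by an open annulus with no zeros on its boundary: as the fundamental annulus is slid radially, one must verify that a zero exiting through the outer boundary is always matched by its $q$-translate entering through the inner boundary, so that the multiplicity count stays constant. Everything else is the routine computation above; this is just Lemma~$2$ of \cite{ASD} (equivalently Theorem~$1.7$ of \cite{H1}) restated.
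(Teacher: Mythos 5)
Your proof is correct: the reduction to a fundamental annulus with no zeros on its boundary is handled properly (equal multiplicities along a $q$-orbit, exactly one orbit representative per half-open annulus), and the argument-principle computation with the substitution $z=qw$ and the logarithmic derivative of the functional equation is exactly right. The paper itself gives no proof of this proposition --- it is quoted as \cite[Lemma 2]{ASD} and \cite[Theorem 1.7]{H1} --- and your argument is precisely the standard one found in those references, so there is nothing to compare beyond noting that you have supplied the proof the paper omits.
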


\begin{proposition}\cite[Proposition $3.4$]{M1}\label{proposition:hecke-phi-id}  Let $x\ne0.$  Then
\begin{equation}j(q^2x;q^4) j(q^5x;q^8)+\frac{q}{x}\cdot j(x;q^4) j(qx;q^8)-\frac{J_1}{J_4}\cdot j(-q^3x;q^4)j(q^3x;q^8)=0.
\label{equation:hecke-phi-id}
\end{equation}
\end{proposition}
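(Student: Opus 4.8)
The plan is to treat the left-hand side as a function of $x$ and apply Proposition~\ref{proposition:H1Thm1.7}. First I would set
\[
F(x):=j(q^2x;q^4)\,j(q^5x;q^8)+\frac{q}{x}\,j(x;q^4)\,j(qx;q^8)-\frac{J_1}{J_4}\,j(-q^3x;q^4)\,j(q^3x;q^8),
\]
clear the factor $1/x$ so as to work with $xF(x)$, which is analytic for $x\ne0$. The first main step is to verify a functional equation of the form $(xF)(q^{?}x)=C x^{-n}(xF)(x)$ for an appropriate power of $q$; the natural modulus here is $q^8$, since the two "base" moduli are $q^4$ and $q^8$ and $\mathrm{lcm}=8$. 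Using the shift rules $(\ref{equation:1.8})$, i.e. $j(qx;q)=-x^{-1}j(x;q)$ (applied in the forms $j(q^4x;q^4)=-(q^2x)^{-1}j(q^2\cdot q^2x? \dots)$, more precisely $j(q^{4}y;q^{4})=-y^{-1}j(y;q^4)$ and $j(q^{8}y;q^{8})=-y^{-1}j(y;q^8)$), each of the three products on the right transforms under $x\mapsto q^8x$ by an explicit monomial in $q$ and $x$; the content of this step is to check that all three pick up the \emph{same} monomial factor, so that $F$ (hence $xF$) satisfies a clean functional equation $F(q^8x)=C\,x^{-n}F(x)$ with a common $C$ and common $n$. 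I expect $n$ to be small, likely $n=2$ or $n=3$ after accounting for the $1/x$.

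The second step is the zero count. By Proposition~\ref{proposition:H1Thm1.7}, once the functional equation is established, $F$ is either identically zero or has exactly $n$ zeros in a fundamental annulus $|q^8|<|x|\le1$. So I would exhibit $n$ zeros of $F$ by hand: each theta factor $j(\alpha x;q^4)$ vanishes when $\alpha x\in q^{4\Z}$ and $j(\beta x;q^8)$ vanishes when $\beta x\in q^{8\Z}$, so candidate common zeros are values of $x$ where, say, the first and second terms each vanish simultaneously while the third also does — the cleanest choices are $x=q^{-2}$, $x=q^{-5}$ (making $j(q^2x;q^4)$ or $j(q^5x;q^8)$ vanish) and checking that the surviving terms cancel, or $x$ making two of the three products vanish outright so that the remaining single product must be shown to vanish too (which it does not automatically, so these are not the right points). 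The honest approach: pick $x=q^{-2}$. Then $j(q^2x;q^4)=j(1;q^4)=0$, and $j(-q^3x;q^4)=j(-q;q^4)\ne0$, so we need the surviving terms $\frac{q}{x}j(x;q^4)j(qx;q^8)-\frac{J_1}{J_4}j(-q^3x;q^4)j(q^3x;q^8)$ to vanish at $x=q^{-2}$, which is a genuine theta-identity check (reducible via $(\ref{equation:1.11})$ relating $j(\cdot;q^4)$ to products of $j(\cdot;q^8)$'s). Similarly $x=q^{-5}$ kills the second factor of the first term. Verifying cancellation at these two (and, if $n=3$, one more such point, e.g. a zero of a $q^4$-theta like $x=q^{-6}$) is the crux.

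The main obstacle will be this zero-verification step: at each candidate zero one must reduce a ratio of theta constants — things like $j(-q;q^4)$, $j(q;q^8)$, $j(-q^6;q^8)$, $J_1/J_4$ — to a single normalization and confirm equality. This is where identities $(\ref{equation:1.10})$, $(\ref{equation:1.11})$ (splitting a modulus-$q^4$ theta into modulus-$q^8$ thetas) and the product rearrangements for the $J$-symbols do the real work; it is routine in principle but delicate with the powers of $q$. Once $n$ distinct zeros are confirmed in the fundamental annulus, Proposition~\ref{proposition:H1Thm1.7} forces $F\equiv0$, which is exactly $(\ref{equation:hecke-phi-id})$. (An alternative route, avoiding the zero count, is to expand both $j(q^2x;q^4)j(q^5x;q^8)$ and $j(x;q^4)j(qx;q^8)$ via the Hecke-type product-to-sum formula $(\ref{equation:H1Thm1.1})$ and match the resulting indefinite theta series term by term against $\frac{J_1}{J_4}j(-q^3x;q^4)j(q^3x;q^8)$ expanded the same way; but the functional-equation method is shorter and is the one I would present.)
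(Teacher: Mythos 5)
Your overall strategy is the right one, and it is worth noting that the paper does not actually reprove this proposition (it is quoted from \cite{M1}); what the paper does prove by exactly the method you describe is the companion Proposition \ref{proposition:hecke-phi-id-alt}: establish a functional equation under $x\mapsto q^8x$ and then invoke Proposition \ref{proposition:H1Thm1.7}. For the present left-hand side $F$ the shift rule $(\ref{equation:1.8})$ gives $F(q^8x)=-q^{-13}x^{-3}F(x)$, so the relevant exponent is $n=3$, and all three terms do pick up the same monomial factor as you anticipated.

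There is, however, a genuine logical gap in your zero-counting step. Proposition \ref{proposition:H1Thm1.7} is a dichotomy: either $F$ has \emph{exactly} $n$ zeros in the annulus, or $F\equiv 0$. Exhibiting $n$ distinct zeros is therefore entirely consistent with $F$ not vanishing identically, so your concluding sentence ``once $n$ distinct zeros are confirmed \dots\ Proposition \ref{proposition:H1Thm1.7} forces $F\equiv 0$'' is false as stated: you must exhibit strictly more than $n$ zeros, i.e.\ \emph{four} zeros here, whereas your plan budgets for only two or three ($x=q^{-2}$, $x=q^{-5}$, possibly $x=q^{-6}$). Compare the paper's proof of Proposition \ref{proposition:hecke-phi-id-alt}, where $f(q^8x)=q^{-9}x^{-3}f(x)$ gives $n=3$ and the proof checks the four points $x=-1,-q,-q^2,-q^3$. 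A secondary practical remark: your candidate points each kill only one factor of one term, leaving a two-term theta-constant identity to verify at every point; points at which the computation collapses further are available — for instance at $x=1$ the factor $j(1;q^4)=0$ removes the middle term, and the remaining equality reduces via $j(q^5;q^8)=j(q^3;q^8)$ and $j(-q^3;q^4)=\overline{J}_{1,4}=J_2^2/J_1$ to the elementary rearrangement $j(q^2;q^4)=J_2^2/J_4$. With the count corrected to four zeros (all placed, or translated by the functional equation, into the fundamental annulus $|q^8|<|x|\le 1$) and the evaluations carried out, the argument goes through.
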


Using Proposition \ref{proposition:H1Thm1.7}, we can prove a result similar to (\ref{equation:hecke-phi-id}).
\begin{proposition}\label{proposition:hecke-phi-id-alt}  Let $x\ne0.$  Then
\begin{equation}j(-x;q^4) j(-q^5x;q^8)- j(-q^2x;q^4) j(-qx;q^8)-x\frac{J_1}{J_4}\cdot j(q^3x;q^4)j(-q^7x;q^8)=0.
\label{equation:hecke-phi-id-alt}
\end{equation}
\end{proposition}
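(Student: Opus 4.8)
The plan is to mimic the proof of Proposition \ref{proposition:hecke-phi-id} (which is \cite[Proposition $3.4$]{M1}) by applying Proposition \ref{proposition:H1Thm1.7}. First I would define
\[
F(x):= j(-x;q^4)\, j(-q^5x;q^8)- j(-q^2x;q^4)\, j(-qx;q^8)-x\frac{J_1}{J_4}\cdot j(q^3x;q^4)\, j(-q^7x;q^8),
\]
which is analytic for $x\neq 0$, and establish a functional equation of the form $F(q^{?}x)=Cx^{-n}F(x)$ governing its behaviour under $x\mapsto qx$ (or some power of $q$). The tool here is $(\ref{equation:1.8})$, $j(qx;q)=-x^{-1}j(x;q)$, applied in each modulus $q^4$ and $q^8$: for instance $j(-q^4x;q^4)=-x^{-1}q^{-4}\cdot q^4 j(-x;q^4)$-type manipulations, tracking carefully the shift needed so that every one of the three terms transforms the same way. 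The natural candidate is to replace $x$ by $q^8 x$ (the common period for the $j(\cdot;q^8)$ factors), under which each $j(\cdot;q^4)$ factor picks up two applications of $(\ref{equation:1.8})$ and each $j(\cdot;q^8)$ factor one; the resulting common prefactor gives $F(q^8x)=Cx^{-n}F(x)$ with some explicit monomial $Cx^{-n}$. Then Proposition \ref{proposition:H1Thm1.7} (applied with $q$ replaced by $q^8$) says $F\equiv 0$ provided we can exhibit strictly more than $n$ zeros of $F$ in a fundamental annulus, which forces $F\equiv 0$.

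The second, and genuinely computational, step is the zero count. I would hunt for values of $x$ — typically $x$ a power of $q$ times $\pm 1$, e.g. $x=q^{-2}, x=-q, x=q^3$, and so on — at which two of the three theta products vanish simultaneously because their arguments hit zeros of $j(\,\cdot\,;q^m)$ (recall $j(q^a;q^m)=0$ exactly when $a\equiv 0 \pmod m$, up to the reflection/shift symmetries $(\ref{equation:1.8})$, $(\ref{equation:1.7})$). At such an $x$ the third term must then vanish too, confirming $x$ is a zero of $F$; if $n$ turns out small (it should be a single-digit integer) one needs only $n+1$ such coincidences, and by the quasi-periodicity each gives an orbit of zeros, so only a bounded check is required. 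An alternative, and perhaps cleaner, route is to avoid the zero count altogether: derive $(\ref{equation:hecke-phi-id-alt})$ directly from the already-proved $(\ref{equation:hecke-phi-id})$ by the substitution $q\mapsto -q$ together with the conversion identities $(\ref{equation:1.10})$, $(\ref{equation:1.11})$ relating $j(\,\cdot\,;-q)$ to $j(\,\cdot\,;q)$; since $(\ref{equation:hecke-phi-id})$ involves $j(\,\cdot\,;q^4)$ and $j(\,\cdot\,;q^8)$, replacing $q$ by $-q$ sends these to $j(\,\cdot\,;q^4)$ and $j(\,\cdot\,;q^8)$ again (even powers), changing only signs of the arguments and the sign in the $J_1/J_4$ coefficient via $J_1\mapsto \overline{J}_{1,?}$-type rewrites — this should reproduce $(\ref{equation:hecke-phi-id-alt})$ after relabelling $x$.

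The step I expect to be the main obstacle is bookkeeping the exact monomial prefactor $Cx^{-n}$ in the functional equation and hence pinning down $n$ precisely: a sign error or an off-by-one in the exponent propagates into the zero count and can make the argument collapse. In the $q\mapsto -q$ approach the analogous obstacle is ensuring the product-rearrangement identities are applied with the correct normalizing $J$-quotients so that the coefficient $x J_1/J_4$ comes out exactly, with the right sign, rather than up to a spurious power of $q$. I would carry out whichever of the two routes produces cleaner bookkeeping; I anticipate the direct $q\mapsto -q$ substitution is the shorter path, with Proposition \ref{proposition:H1Thm1.7} serving as the fallback verification.
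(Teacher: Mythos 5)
Your first route is exactly the paper's proof: with $f(x)$ the left-hand side one computes $f(q^8x)=q^{-9}x^{-3}f(x)$ via repeated use of $(\ref{equation:1.8})$, so $n=3$ in Proposition \ref{proposition:H1Thm1.7} (with $q\mapsto q^8$), and exhibiting four zeros --- the paper takes $x=-1,-q,-q^2,-q^3$ --- forces $f\equiv 0$. Two cautions on the details you left open. First, at these points it is generally only \emph{one} of the three theta products that vanishes outright; verifying $f=0$ there requires checking that the remaining two terms cancel via product rearrangements, not a ``two vanish, hence the third must'' argument. Second, your preferred shortcut of deducing $(\ref{equation:hecke-phi-id-alt})$ from $(\ref{equation:hecke-phi-id})$ by $q\mapsto -q$ does not land on the stated identity: applying $q\mapsto -q$ (and then $x\mapsto -x$) to $(\ref{equation:hecke-phi-id})$ produces the pairings $j(-x;q^4)j(qx;q^8)$ and $j(-q^2x;q^4)j(q^5x;q^8)$, whereas $(\ref{equation:hecke-phi-id-alt})$ pairs $j(-x;q^4)$ with $j(-q^5x;q^8)$ and $j(-q^2x;q^4)$ with $j(-qx;q^8)$, and these $j(\cdot;q^8)$ arguments are not related by the quasi-periodicity $(\ref{equation:1.8})$--$(\ref{equation:1.7})$; so you should carry out the functional-equation argument, which is what the paper does.
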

\begin{proof}[Proof of Proposition \ref{proposition:hecke-phi-id-alt}]  Let $f(x)$ be the left-hand side of (\ref{equation:hecke-phi-id-alt}).  We have $f(q^8x)= q^{-9}x^{-3}f(x).$  By Proposition \ref{proposition:H1Thm1.7}, if $f$ has more than three zeros in $|q^8|<|x|\le 1$, then $f(x)=0$ for all $x\ne 0$.  But it is easy to check that $f(x)=0$ for $x=-1, -q, -q^2, -q^3$.
\end{proof}

The following identity will be our workhorse and can be found in the lost notebook:
\begin{proposition}{\cite[p. $32$]{RLN}, \cite[$(12.5.3)$]{ABI}} \label{proposition:gsplit} For generic $x\in \mathbb{C}$
\begin{equation}
g(x;q)=-x^{-1}+qx^{-3}g(-qx^{-2};q^4)-qg(-qx^2;q^4)+\frac{J_2J_{2,4}^2}{xj(x;q)j(-qx^2;q^2)}.\label{equation:gsplit}
\end{equation}
\end{proposition}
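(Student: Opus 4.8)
The plan is to prove \eqref{equation:gsplit} by passing through the Appell--Lerch-sum representation of the universal mock theta function and dissecting it modulo~$2$; the base $q^{4}$ and the quadratic arguments $-qx^{\pm 2}$ on the right are precisely the fingerprints of such a dissection. I would \emph{not} try to clear denominators in \eqref{equation:gsplit} and apply Proposition~\ref{proposition:H1Thm1.7} to the difference of the two sides: because $g$ is genuinely mock, that difference is not a theta function, and a functional-equation argument would first have to exhibit the cancellation of nonholomorphic shadows, which the elementary theta calculus of Section~\ref{section:prelim} does not detect. So I would argue with the defining $q$-series \eqref{equation:g-def} directly.

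The steps, in order. (i)~Rewrite $g(x;q)+x^{-1}=x^{-1}\sum_{n\ge 0}q^{n^{2}}/\big((x)_{n+1}(q/x)_{n}\big)$, by a classical $q$-series transformation (a limiting Rogers--Fine / ${}_{2}\phi_{1}$ manipulation), as a single Appell--Lerch-type sum $\sum_{r}(-1)^{r}q^{a\binom r2+br}x^{cr}/(1-q^{r}x^{d})$ divided by one theta function. (ii)~Dissect this normal form modulo~$2$ in~$r$: splitting $r=2s$ from $r=2s+1$ produces two sums to the base $q^{4}$ together with one residual genuine theta quotient, the residual being the standard artefact of dissecting an Appell--Lerch sum. (iii)~Re-fold the two $q^{4}$-base sums into \eqref{equation:g-def}; after the substitutions forced by the dissection these become $qx^{-3}g(-qx^{-2};q^{4})$ and $-q\,g(-qx^{2};q^{4})$. (iv)~Simplify the residual theta quotient to $J_{2}J_{2,4}^{2}/\big(x\,j(x;q)\,j(-qx^{2};q^{2})\big)$ using Jacobi's triple product, the product rearrangements at the start of Section~\ref{section:prelim} (for instance $J_{1,2}=J_{1}^{2}/J_{2}$ and $\overline{J}_{1,2}=J_{2}^{5}/(J_{1}^{2}J_{4}^{2})$), and, where convenient, \eqref{equation:1.10}--\eqref{equation:1.11} and \eqref{equation:H1Thm1.1}; this last identification is a pure theta identity that can be checked against Proposition~\ref{proposition:H1Thm1.7} by its functional equation and its zeros, so the genuinely mock part of the argument is confined to steps (i)--(iii).

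The main obstacle is precisely the bookkeeping in (i)--(iii): choosing the right Appell--Lerch representation of $g$, propagating all the shifts through the dissection, and producing the residual in a recognisable closed form. A useful internal check on this step --- not itself a proof --- is the pole structure of \eqref{equation:gsplit} as a function of $x$: on the left $g(x;q)$ has simple poles at $x=q^{k}$ and is regular at the zeros $x^{2}=-q^{2m-1}$ of $j(-qx^{2};q^{2})$, whereas on the right the poles at $x=q^{k}$ must come entirely from $1/j(x;q)$ in the correction term, and at $x^{2}=-q^{2m-1}$ the pole of $qx^{-3}g(-qx^{-2};q^{4})$ (odd $m$) or of $-q\,g(-qx^{2};q^{4})$ (even $m$) must be cancelled against the pole of $1/j(-qx^{2};q^{2})$; matching these residues is a good way to catch sign and shift errors. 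Since \eqref{equation:gsplit} is Entry~$(12.5.3)$ of \cite{ABI}, in practice one may just cite it, but the route above is how I would make the note self-contained.
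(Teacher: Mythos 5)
The paper does not actually prove Proposition~\ref{proposition:gsplit}: it is quoted as a known entry from the lost notebook, with the proof delegated to the citations \cite[p.~$32$]{RLN} and \cite[$(12.5.3)$]{ABI}. So there is no in-paper argument to compare yours against, and the cleanest way to match the paper would simply be to cite the result, as you note at the end. Your instinct that one cannot just clear denominators and throw Proposition~\ref{proposition:H1Thm1.7} at the difference of the two sides is sound, and the pole-and-residue consistency check you describe is a genuinely useful sanity test.

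As a self-contained proof, however, the proposal has a real gap: steps (i)--(iii), which you yourself identify as carrying all the content, are asserted rather than executed, and step (i) never specifies \emph{which} Appell--Lerch normal form of $g$ is to be used. This choice is not cosmetic. The most familiar partial-fraction representation, $\sum_{n\ge0}q^{n^2}/\big((zq)_n(z^{-1}q)_n\big)=\frac{1-z}{(q)_\infty}\sum_n(-1)^nq^{n(3n+1)/2}/(1-zq^n)$, has cubic exponents $3\binom n2+2n$, so its mod-$2$ dissection lands in base $q^{12}$, not $q^4$; to reach the right-hand side of \eqref{equation:gsplit} one must then refold \emph{pairs} of base-$q^{12}$ Appell--Lerch sums back into single $g(\cdot;q^4)$'s (each $g$ itself being a sum of two such pieces), which is an extra recombination step your outline glosses over when it says the two dissected sums ``become'' $qx^{-3}g(-qx^{-2};q^4)$ and $-q\,g(-qx^2;q^4)$. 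The machinery to do this honestly exists --- the $n$-dissection of $m(x,q,z)$ and the expression of $g$ through $m$ in \cite{HM1} --- but until the representation is fixed and the dissection, refolding, and residual theta quotient are actually computed, what you have is a plausible plan rather than a proof. Either carry out that computation explicitly or, as the paper does, cite \cite[$(12.5.3)$]{ABI} and move on.
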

\noindent Proposition \ref{proposition:gsplit} has a useful and easily shown corollary:
\begin{corollary} {\cite[p. $39$]{RLN}, \cite[$(12.4.4)$]{ABI}} \label{corollary:rootsof1} For generic $x\in \mathbb{C}$
\begin{subequations}
\begin{equation}
g(x;q)+g(-x;q)=-2qg(-qx^2;q^4)+\frac{2J_2\overline{J}_{1,4}^2}{j(-qx^2;q^4)j(x^2;q^2)},\label{equation:rootsof1n2k0}
\end{equation}
\begin{equation}
g(x;q)-g(-x;q)=-2x^{-1}+2qx^{-3}g(-qx^{-2};q^4)+\frac{2J_2\overline{J}_{1,4}^2}{xj(-q^3x^2;q^4)j(x^2;q^2)}.\label{equation:rootsof1n2k1}
\end{equation}
\end{subequations}
\end{corollary}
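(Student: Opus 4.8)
The plan is to derive both identities directly from the splitting formula $(\ref{equation:gsplit})$ in Proposition~\ref{proposition:gsplit} by specializing it at $x$ and at $-x$ and then adding, respectively subtracting. Replacing $x$ by $-x$ in $(\ref{equation:gsplit})$ and using $j(-q(-x)^2;q^2)=j(-qx^2;q^2)$ gives
\[
g(-x;q)=x^{-1}-qx^{-3}g(-qx^{-2};q^4)-qg(-qx^2;q^4)-\frac{J_2J_{2,4}^2}{x\,j(-x;q)\,j(-qx^2;q^2)}.
\]
Adding this to $(\ref{equation:gsplit})$, the terms $\pm qx^{-3}g(-qx^{-2};q^4)$ and $\mp x^{-1}$ cancel while the two copies of $-qg(-qx^2;q^4)$ add, producing the $-2qg(-qx^2;q^4)$ of $(\ref{equation:rootsof1n2k0})$; subtracting instead cancels the $-qg(-qx^2;q^4)$ terms and doubles the others, producing the $-2x^{-1}+2qx^{-3}g(-qx^{-2};q^4)$ of $(\ref{equation:rootsof1n2k1})$. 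In both cases what remains is to show that
\[
\frac{J_2J_{2,4}^2}{x\,j(-qx^2;q^2)}\left(\frac{1}{j(x;q)}\mp\frac{1}{j(-x;q)}\right)
\]
equals the stated theta quotient, with the minus sign for $(\ref{equation:rootsof1n2k0})$ and the plus sign for $(\ref{equation:rootsof1n2k1})$; thus the corollary reduces to an identity purely among theta quotients.

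To evaluate that quotient I would first record two auxiliary facts. From the elementary Pochhammer identity $(y;q)_\infty(-y;q)_\infty=(y^2;q^2)_\infty$ one gets $j(x;q)j(-x;q)=J_{1,2}\,j(x^2;q^2)=(J_1^2/J_2)\,j(x^2;q^2)$, and from $(\ref{equation:jsplit})$ applied both to $j(x;q)$ and to $j(-x;q)$ one gets $j(-x;q)-j(x;q)=2x\,j(-q^3x^2;q^4)$ and $j(-x;q)+j(x;q)=2\,j(-qx^2;q^4)$. Substituting these, the displayed quotient becomes $\dfrac{2J_2^2J_{2,4}^2\,j(-q^3x^2;q^4)}{J_1^2\,j(-qx^2;q^2)\,j(x^2;q^2)}$ for $(\ref{equation:rootsof1n2k0})$ and $\dfrac{2J_2^2J_{2,4}^2\,j(-qx^2;q^4)}{x\,J_1^2\,j(-qx^2;q^2)\,j(x^2;q^2)}$ for $(\ref{equation:rootsof1n2k1})$. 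Next I would rewrite $j(-qx^2;q^2)$ via $(\ref{equation:1.10})$ with $q\mapsto q^2$ and $x\mapsto -qx^2$, namely $j(-qx^2;q^2)=(J_2/J_4^2)\,j(-qx^2;q^4)\,j(-q^3x^2;q^4)$; after this substitution one factor $j(-q^3x^2;q^4)$ (resp.\ $j(-qx^2;q^4)$) cancels and, using $\overline{J}_{1,4}=J_2^2/J_1$, each expression collapses to $\dfrac{2J_2\overline{J}_{1,4}^2}{j(-qx^2;q^4)\,j(x^2;q^2)}$, resp.\ $\dfrac{2J_2\overline{J}_{1,4}^2}{x\,j(-q^3x^2;q^4)\,j(x^2;q^2)}$, which are exactly the right-hand sides of $(\ref{equation:rootsof1n2k0})$ and $(\ref{equation:rootsof1n2k1})$.

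I do not expect a genuine obstacle. The only nontrivial product identity used is $J_2^2=J_4J_{2,4}$ (equivalently $J_1\overline{J}_{1,4}=J_4J_{2,4}$), which follows immediately by rewriting the $q$-Pochhammer symbols through the Jacobi triple product, together with the reduction of $j(-qx^2;q^2)$ by $(\ref{equation:1.10})$ noted above. The one point that needs care is the bookkeeping in the first step: keeping track of the signs so that the Appell--Lerch terms $g(-qx^{-2};q^4)$ and the monomial $x^{-1}$ drop out of $(\ref{equation:rootsof1n2k0})$ while persisting in $(\ref{equation:rootsof1n2k1})$, and correctly identifying which of $j(-x;q)\pm j(x;q)$ enters in each case.
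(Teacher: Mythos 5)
Your proposal is correct, and it is exactly the derivation the paper intends: the paper states the corollary is ``easily shown'' from Proposition \ref{proposition:gsplit}, and your argument---substituting $x\mapsto -x$, adding/subtracting, and then collapsing the theta quotient via $j(x;q)j(-x;q)=J_{1,2}\,j(x^2;q^2)$, the split $(\ref{equation:jsplit})$, and $(\ref{equation:1.10})$ together with $\overline{J}_{1,4}=J_2^2/J_1$---is precisely that routine verification, with all signs and product rearrangements handled correctly.
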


Let us denote by $N(m,n)$ the number of partitions of $n$ with rank equal to $m$.  The generating function for $N(m,n)$ is given by
\begin{equation}
\sum_{n=0}^{\infty}\sum_{m=-\infty}^{\infty}N(m,n)z^mq^n=\sum_{n=0}^{\infty}\frac{q^{n^2}}{(zq)_n(z^{-1}q)_n}.\label{equation:N-generating-fn}
\end{equation}
Rank deviations $(\ref{equation:rd-def})$ can be computed using $(\ref{equation:N-generating-fn})$ and $(\ref{equation:g-def})$:
\begin{equation}
D(a,M) =\frac{1}{M}\sum_{j=0}^{M-1}\zeta_M^{-aj}\Big (1-\zeta_M^j\Big )\Big (1+\zeta_M^jg(\zeta_M^j;q)\Big ), \label{equation:rankdeviant-def}
\end{equation}
where $\zeta_M$ is a primitive $M$-th root of unity.  In general, one expresses $g(x;q)$ in terms of Appell--Lerch functions and then sums them over roots of unity using \cite[Theorem $3.9$]{HM1}, see \cite{HM2}.  In our setting the modulus $M$ is a power of two, so we use instead Corollary \ref{corollary:rootsof1}.

Let us denote by $C(m,n)$ the number of partitions of $n$ with crank equal to $m$.  The generating function for $C(m,n)$ is given by
\begin{equation}
\sum_{n=0}^{\infty}\sum_{m=-\infty}^{\infty}C(m,n)z^mq^n=\prod_{n=1}^{\infty}\frac{(1-q^n)}{(1-zq^n)(1-z^{-1}q^n)},
\end{equation}
and the analogous deviation from the expected value is
\begin{equation}
D_C(a,M) =\frac{1}{M}\sum_{j=1}^{M-1}\zeta_M^{-aj}\frac{(q)_{\infty}}{(\zeta_M^jq)_{\infty}(\zeta_M^{-j}q)_{\infty}}, 
\label{equation:crankdeviant-def}
\end{equation}
where $\zeta_M$ is a primitive $M$-th root of unity.

\section{Proof of Theorem \ref{theorem:D4-deviants}} \label{section:DM4-table}

Theorem \ref{theorem:D4-deviants} is a straightforward consequence of the following proposition: 
\begin{proposition} \label{proposition:D4-deviants} We have the following $2$-dissections:
{\allowdisplaybreaks \begin{align}
D(0,4)&= 2-2q^2g(-q^2;q^{16})-2\cdot \frac{\overline{J}_{4,8}\overline{J}_{6,16}}{J_4} 
 + \frac{1}{2}\cdot\frac{\overline{J}_{1,2}\overline{J}_{1,4}}{J_4} 
 +\frac{1}{4}\cdot \frac{J_{1,2}J_{1,4}}{J_4},\label{equation:D04-deviant} \\
D(1,4)=D(3,4)&= -1+q^2g(-q^2;q^{16})+q^5g(-q^6;q^{16})+\frac{\overline{J}_{4,8}J_{1,4}}{J_4}
-\frac{1}{4}\cdot\frac{J_{1,2}J_{1,4}}{J_4},\label{equation:D14-deviant}\\
D(2,4)&=-2q^5g(-q^6;q^{16})+2\cdot q  \frac{\overline{J}_{4,8}\overline{J}_{2,16}}{J_4} 
  -\frac{1}{2}\cdot\frac{\overline{J}_{1,2}\overline{J}_{1,4}}{J_4}
  +\frac{1}{4}\cdot \frac{J_{1,2}J_{1,4}}{J_4}.\label{equation:D24-deviant}
\end{align}}%
\end{proposition}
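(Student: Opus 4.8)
The plan is to evaluate the defining formula $(\ref{equation:rankdeviant-def})$ for $D(a,4)$ with $\zeta_4 = i$ and reduce the resulting sum of $g$-values at fourth roots of unity to the pieces displayed in the proposition. Writing out $(\ref{equation:rankdeviant-def})$ for $M=4$, the $j=0$ term vanishes (because of the factor $1-\zeta_M^j$), and one is left with a combination of $1 + g(1;q)\cdot 1$ — no, more precisely with terms involving $g(-1;q)$, $g(i;q)$, and $g(-i;q)$ weighted by roots of unity. So the first step is purely bookkeeping: expand $D(0,4)$, $D(1,4)=D(3,4)$, and $D(2,4)$ explicitly in terms of $g(-1;q)$, $g(i;q)+g(-i;q)$, and $g(i;q)-g(-i;q)$, collecting the root-of-unity coefficients.

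Next I would invoke Corollary \ref{corollary:rootsof1}. The key observation is that $g(i;q)\pm g(-i;q)$ is exactly what $(\ref{equation:rootsof1n2k0})$ and $(\ref{equation:rootsof1n2k1})$ compute with $x=i$: there $x^2=-1$, so $g(-qx^2;q^4)=g(q;q^4)$, $g(-qx^{-2};q^4)=g(q;q^4)$ again, and the theta quotients become $j(q;q^4)$, $j(-q;q^2)$, $j(-q^3;q^4)$, etc. Similarly $g(-1;q)$ needs to be handled; one natural route is to apply the split formula $(\ref{equation:gsplit})$ from Proposition \ref{proposition:gsplit} with $x=-1$, or to use $(\ref{equation:rootsof1n2k0})$/$(\ref{equation:rootsof1n2k1})$ with $x=1$ to get $g(1;q)\pm g(-1;q)$. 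After this substitution everything is expressed in terms of $g(\cdot;q^4)$ and theta functions in $q$, $q^2$, $q^4$. The resulting $g(q;q^4)$-type terms still need one more halving to reach the modulus-$16$ functions $g(-q^2;q^{16})$ and $g(-q^6;q^{16})$ that appear in the statement, so I would apply Corollary \ref{corollary:rootsof1} (or Proposition \ref{proposition:gsplit}) a second time to $g(q;q^4)$, turning $q\mapsto q^4$ in those formulas, which produces $g$ of argument roughly $q^2$ at base $q^{16}$.

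The remaining work is to show that the accumulated theta-function coefficients collapse to precisely $-2\overline{J}_{4,8}\overline{J}_{6,16}/J_4 + \tfrac12 \overline{J}_{1,2}\overline{J}_{1,4}/J_4 + \tfrac14 J_{1,2}J_{1,4}/J_4$ and its analogues. This is where Hickerson's product identities come in: the theta quotients emerging from two applications of Corollary \ref{corollary:rootsof1} are products like $J_2\overline{J}_{1,4}^2 / (j(-q;q^4)j(-q^2;q^2)\cdots)$, and these must be simplified using the product splittings listed at the start of Section \ref{section:prelim}, the identities $(\ref{equation:1.10})$, $(\ref{equation:1.11})$, $(\ref{equation:jsplit})$, and the addition formulas $(\ref{equation:H1Thm1.1})$, $(\ref{equation:H1Thm1.2B})$. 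Rational-function-in-$q$ bookkeeping aside, one will likely need the Weierstrass three-term relation $(\ref{equation:Weierstrass})$ and/or Propositions \ref{proposition:hecke-phi-id} and \ref{proposition:hecke-phi-id-alt} to combine the Hecke-type pieces. I expect this last consolidation — matching the mess of theta quotients against the clean right-hand side, especially separating the even and odd parts of $q$ correctly so that the $g(-q^2;q^{16})$ and $g(-q^6;q^{16})$ supports line up — to be the main obstacle; the earlier steps are essentially mechanical once one commits to applying Corollary \ref{corollary:rootsof1} twice. As a consistency check, one can verify that $D(0,4) + 2D(1,4) + D(2,4) = 0$, which must hold since the four residue classes partition all partitions, and confirm a few low-order $q$-coefficients against direct enumeration of partitions of small $n$ by rank mod $4$.
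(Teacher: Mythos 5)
Your plan coincides with the paper's own proof: expand $D(a,4)$ via $(\ref{equation:rankdeviant-def})$, apply Corollary \ref{corollary:rootsof1} to the combinations $g(\pm 1;q)$ and $g(\pm i;q)$ (the $x=1$ case requiring the limit in which the two singular theta quotients cancel via $(\ref{equation:jsplit})$ and $(\ref{equation:1.12})$), and then apply Corollary \ref{corollary:rootsof1} and Proposition \ref{proposition:gsplit} a second time to lift $g(\pm q;q^4)$ and $g(\pm iq;q^4)$ to the base-$q^{16}$ functions, finishing with product rearrangements. The only difference is that you over-provision the toolkit: for this modulus-$4$ proposition the paper needs only $(\ref{equation:jsplit})$ and $(\ref{equation:1.12})$ for the consolidation, not the Weierstrass relation or Propositions \ref{proposition:hecke-phi-id}--\ref{proposition:hecke-phi-id-alt}, which enter only in the modulus-$8$ arguments.
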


\begin{proof}[Proof of Theorem \ref{theorem:D4-deviants}]
Using identity (\ref{equation:jsplit}) and collecting terms gives
\begin{align}
\frac{J_{1,2}J_{1,4}}{J_4}
&=\frac{1}{J_4}\cdot \Big [ \overline{J}_{4,8}-q \overline{J}_{0,8}\Big ] \cdot \Big [  \overline{J}_{6,16}-q \overline{J}_{14,16}\Big ] \notag\\
&=\frac{1}{J_4}\cdot \Big [ \overline{J}_{4,8}\overline{J}_{6,16}+q^2\overline{J}_{0,8}\overline{J}_{14,16}\Big ]
-\frac{q}{J_4}\cdot \Big [ \overline{J}_{4,8}\overline{J}_{14,16}+\overline{J}_{0,8}\overline{J}_{6,16}\Big ],\label{equation:D4-rewrite-1}
\end{align}
as well as
\begin{equation}
\frac{\overline{J}_{1,2}\overline{J}_{1,4}}{J_4}
=\frac{1}{J_4}\cdot \Big [ \overline{J}_{4,8}\overline{J}_{6,16}+q^2\overline{J}_{0,8}\overline{J}_{14,16}\Big ]
+\frac{q}{J_4}\cdot \Big [ \overline{J}_{4,8}\overline{J}_{14,16}+\overline{J}_{0,8}\overline{J}_{6,16}\Big ],\label{equation:D4-rewrite-2}
\end{equation}
and
\begin{equation}
\frac{\overline{J}_{4,8}J_{1,4}}{J_4}
=\frac{1}{J_4}\cdot \Big [ \overline{J}_{4,8}\overline{J}_{6,16}-q\cdot \overline{J}_{4,8}\overline{J}_{14,16}\Big ].\label{equation:D4-rewrite-3}
\end{equation}
Rewrite identities  $(\ref{equation:D04-deviant})$--$(\ref{equation:D24-deviant})$ using $(\ref{equation:D4-rewrite-1})$--$(\ref{equation:D4-rewrite-3})$ and collect terms.
\end{proof}

\begin{proof}[Proof of Proposition \ref{proposition:D4-deviants}]
Using (\ref{equation:rankdeviant-def}), we have
{\allowdisplaybreaks \begin{align}
D(0,4)&=\frac{1}{4}\sum_{j=0}^3\Big [(1-i^j)(1+i^jg(i^j;q))\Big ]\notag\\
&=\frac{1}{4}\sum_{j=0}^3\Big [1-i^{j}+i^{j}g(i^{j};q)-i^{2j}g(i^{j};q)\Big ]\notag\\
&=1+\frac{1}{4}\Big [ \big [ g(1;q)-g(-1;q)\big ]-\big [ g(1;q)+g(-1;q)\big ]\label{equation:D04-g-form}\\
&\ \ \ \ \  +i\big [ g(i;q)-g(-i;q)\big ]+\big [ g(i;q)+g(-i;q)\big ] \Big ].\notag
\end{align}}%
Corollary \ref{corollary:rootsof1} gives
{\allowdisplaybreaks \begin{align}
D(0,4)&=1+\frac{1}{4}\Big [ -4+4qg(-q;q^4)-4qg(q;q^4)\notag\\
&\ \ \ \ \ +\lim_{x\rightarrow 1}2J_2\overline{J}_{1,4}^2\cdot \Big ( \frac{1}{xj(-q^3x^2;q^4)j(x^2;q^2)}-\frac{1}{j(-qx^2;q^4)j(x^2;q^2)}\Big )\notag\\
&\ \ \ \ \ +2J_2\overline{J}_{1,4}^2\cdot \Big (\frac{i}{ij(q^3;q^4)j(-1;q^2)}+ \frac{1}{j(q;q^4)j(-1,q^2)}\Big )\Big ]\notag\\
&=qg(-q;q^4)-qg(q;q^4)
+ \frac{J_2\overline{J}_{1,4}^2}{J_{1,4}\overline{J}_{0,2}}\notag\\
&\ \ \ \ \ +\lim_{x\rightarrow 1}\frac{J_2\overline{J}_{1,4}^2}{2}\cdot  \frac{j(-qx^2;q^4)-xj(-q^3x^2;q^4)}{xj(-qx^2;q^4)j(-q^3x^2;q^4)j(x^2;q^2)} \notag\\ 
&=qg(-q;q^4)-qg(q;q^4)+\frac{1}{2}\cdot \frac{J_2^5}{J_1^2J_4^2}\cdot  \frac{J_2^2}{J_{1}}\cdot \frac{1}{J_4}\notag\\
&\ \ \ \ \ +\lim_{x\rightarrow 1}\frac{J_2\overline{J}_{1,4}^2}{2}\cdot  \frac{j(x;q)}{xj(-qx^2;q^4)j(-q^3x^2;q^4)j(x^2;q^2)} \notag\\ 
&=qg(-q;q^4)-qg(q;q^4)+ \frac{1}{2}\cdot \frac{\overline{J}_{1,2}\overline{J}_{1,4}}{J_4} +\frac{1}{4}\cdot \frac{J_{1,2}J_{1,4}}{J_4}, \label{equation:D04-prefinal}
\end{align}}%
where we have used (\ref{equation:jsplit}) and (\ref{equation:1.12}).  Employing Corollary \ref{corollary:rootsof1} again, we obtain
\begin{equation}
D(0,4)=2-2q^2g(-q^2;q^{16})-2\cdot \frac{\overline{J}_{4,8}\overline{J}_{6,16}}{J_4} 
 + \frac{1}{2}\cdot \frac{\overline{J}_{1,2}\overline{J}_{1,4}}{J_4} +\frac{1}{4}\cdot \frac{J_{1,2}J_{1,4}}{J_4}. \label{equation:D04-final}
\end{equation}

Using (\ref{equation:rankdeviant-def}), we have
{\allowdisplaybreaks \begin{align}
D(1,4)&=-1+\frac{1}{4}\Big [\big [g(1;q)+g(-1;q)\big ]-\big[ g(1;q)-g(-1;q)\big ]\label{equation:D14-g-form}\\
&\ \ \ \ \ \ \ \ \ \ \ \ \ \ \ +\big [g(i;q)+g(-i;q)\big ]-i\big [g(i;q)-g(-i;q)\big]\Big ].\notag
\end{align}}%
Using Corollary \ref{corollary:rootsof1}, we obtain
{\allowdisplaybreaks \begin{align}
D(1,4)&=-1+\frac{1}{4}\Big [4-4qg(-q;q^4)\notag\\
&\ \ \ \ \ +\lim_{x\rightarrow 1}2J_2\overline{J}_{1,4}^2\cdot \Big ( \frac{1}{j(-qx^2;q^4)j(x^2;q^2)}-\frac{1}{xj(-q^3x^2;q^4)j(x^2;q^2)}\Big )\notag\\
&\ \ \ \ \ +2J_2\overline{J}_{1,4}^2\cdot \Big ( \frac{1}{j(q;q^4)j(-1,q^2)}-\frac{i}{ij(q^3;q^4)j(-1;q^2)}\Big )\Big ]\notag\\
&=-qg(-q;q^4)
+\lim_{x\rightarrow 1}\frac{J_2\overline{J}_{1,4}^2}{2}\cdot  \frac{xj(-q^3x^2;q^4)-j(-qx^2;q^4)}{xj(-qx^2;q^4)j(-q^3x^2;q^4)j(x^2;q^2)}\notag\\
&=-qg(-q;q^4)
-\lim_{x\rightarrow 1}\frac{J_2\overline{J}_{1,4}^2}{2}\cdot  \frac{j(x;q)}{xj(-qx^2;q^4)j(-q^3x^2;q^4)j(x^2;q^2)}\notag\\
&=-qg(-q;q^4)-\frac{1}{4}\cdot \frac{J_{1,2}J_{1,4}}{J_4},\label{equation:D14-prefinal}
\end{align}}%
where we have again used (\ref{equation:jsplit}) and (\ref{equation:1.12}).  Proposition \ref{proposition:gsplit} then yields
\begin{equation}
D(1,4)=-1+q^2g(-q^2;q^{16})+q^5g(-q^6;q^{16})+\frac{\overline{J}_{4,8}J_{1,4}}{J_4}
-\frac{1}{4}\cdot \frac{J_{1,2}J_{1,4}}{J_4}.\label{equation:D14-final}
\end{equation}

Using (\ref{equation:rankdeviant-def}) gives
{\allowdisplaybreaks \begin{align}
D(2,4)&=\frac{1}{4}\Big [ \big [g(1;q)-g(-1;q)\big ]-\big [ g(1;q)+g(-1;q)\big ]\label{equation:D24-g-form}\\
&\ \ \ \ \  -i\big [ g(i;q)-g(-i;q)\big ]-\big [ g(i;q)+g(-i;q) \big ] \Big ].\notag
\end{align}}%
Using Corollary \ref{corollary:rootsof1}, we obtain
{\allowdisplaybreaks \begin{align}
D(2,4)
&=\frac{1}{4}\Big [ 4qg(-q;q^4)+4qg(q;q^4) -4\frac{J_2\overline{J}_{1,4}^2}{J_{1,4}\overline{J}_{0,2}}\notag \\
&\ \ \ \ \  +\lim_{x\rightarrow 1}2J_2\overline{J}_{1,4}^2
\cdot \Big ( \frac{1}{xj(-q^3x^2;q^4)j(x^2;q^2)}-\frac{1}{j(-qx^2;q^4)j(x^2;q^2)}\Big ) \Big ]\notag\\
&=qg(-q;q^4) +qg(q;q^4)\notag \\
&\ \ \ \ \  -\frac{1}{2}\cdot  \frac{\overline{J}_{1,2}\overline{J}_{1,4}}{J_4}
 +\lim_{x\rightarrow 1}\frac{J_2\overline{J}_{1,4}^2}{2}\cdot  \frac{j(x;q)}{xj(-qx^2;q^4)j(-q^3x^2;q^4)j(x^2;q^2)}\notag\\
 &=qg(-q;q^4) +qg(q;q^4) -\frac{1}{2}\cdot  \frac{\overline{J}_{1,2}\overline{J}_{1,4}}{J_4}
 +\frac{1}{4}\cdot \frac{J_{1,2}J_{1,4}}{J_4},\label{equation:D24-prefinal}
 \end{align}}%
where we have used (\ref{equation:jsplit}) and (\ref{equation:1.12}).  Employing Corollary \ref{corollary:rootsof1} again yields
 \begin{equation}
 D(2,4)= -2q^5g(-q^6;q^{16})+2q\cdot \frac{\overline{J}_{4,8}\overline{J}_{2,16}}{J_4} 
   -\frac{1}{2}\cdot  \frac{\overline{J}_{1,2}J_{2,4}}{J_1}+\frac{1}{4}\cdot \frac{J_{1,2}J_{1,4}}{J_4}, \label{equation:D24-final}
\end{equation}
which completes the proof.
\end{proof}

 \section{Proof of Theorem \ref{theorem:ABCKM-Thm1.6}}\label{section:ABCKM-Thm1.6}

Recalling $(\ref{equation:D04-deviant-alt})$ and $(\ref{equation:D24-deviant-alt})$ and regrouping terms, we have
{\allowdisplaybreaks \begin{align}
D&(0,4)-D(2,4)\notag\\
&= 2-2q^2g(-q^2;q^{16}) +2q^5g(-q^6;q^{16})\notag \\
&\ \ \ \ \  - \frac{\overline{J}_{4,8}\overline{J}_{6,16}}{J_4}
+q^2\cdot \frac{\overline{J}_{0,8}\overline{J}_{14,16}}{J_4}
-q \cdot \frac{\overline{J}_{4,8}\overline{J}_{14,16}}{J_4}
+q \cdot \frac{\overline{J}_{0,8}\overline{J}_{6,16}}{J_4}\notag\\
&= 2-2q^2g(-q^2;q^{16}) +2q^5g(-q^6;q^{16})
 -\frac{1}{J_4}\cdot \Big [  \overline{J}_{4,8}-q\overline{J}_{0,8}\Big ] \cdot\Big [\overline{J}_{6,16}+q\overline{J}_{14,16}\Big ]\notag \\
&= 2-2q^2g(-q^2;q^{16}) +2q^5g(-q^6;q^{16})
-\frac{J_{1,2}\overline{J}_{1,4}}{J_4},\label{equation:ABCKM-Thm1.6-prefinal}
\end{align}}%
where we have used (\ref{equation:jsplit}).   Elementary product rearrangements give
\begin{equation}
\frac{\overline{J}_{1,4}J_{1,2}}{J_4}=\frac{J_{2,4}}{J_4}\cdot J_{1,4}
=\frac{J_{2,4}}{J_4} \cdot \Big [ \overline{J}_{6,16}-q\overline{J}_{14,16}\Big ],\label{equation:ABCKM-productid}
\end{equation}
where we have again used (\ref{equation:jsplit}).  Rewriting (\ref{equation:ABCKM-Thm1.6-prefinal}) with (\ref{equation:ABCKM-productid}) gives Theorem \ref{theorem:ABCKM-Thm1.6}.

\section{On rank deviations modulo $8$}\label{section:D8-prelim}

\begin{theorem} \label{theorem:D8-deviants} We have the following $2$-dissections:
\begin{align}
D(0,8)&=\vartheta_8(-9,7,-3,5,-1,-1,5,-3)+G_8(1,-1,0,0), \label{equation:D08-deviant-final}\\
D(1,8)=D(7,8)&=\vartheta_8(7,-5,-3,1,3,-1,-3,1)+\tfrac{1}{2}\cdot G_8(-1,1,1,1), \label{equation:D18-deviant-final}\\
D(2,8)=D(6,8)&=\vartheta_8(-1,-1,5,-3,-1,-1,5,-3)+ G_8(0,0,0,-1), \label{equation:D28-deviant-final}\\
D(3,8)=D(5,8)&=\vartheta_8(-1,3,-3,1,-5,7,-3,1)+ \tfrac{1}{2}\cdot G_8(1,1,-1,1), \label{equation:D38-deviant-final}\\
D(4,8)&=\vartheta_8(-1,-1,5,-3,7,-9,-3,5)+ G_8(-1,-1,0,0), \label{equation:D48-deviant-final}
\end{align}
where
\begin{align}
\vartheta_8&(a_0,a_1,a_2,a_3,a_4,a_5,a_6,a_7)\\
&:=\frac{1}{8J_4}\Big [ a_0\cdot \overline{J}_{4,8}\overline{J}_{28,64}
+a_1\cdot q^4  \overline{J}_{0,8}\overline{J}_{52,64}
+a_2\cdot q \overline{J}_{4,8}\overline{J}_{20,64}
+a_3\cdot q \overline{J}_{0,8}\overline{J}_{28,64}\notag \\
&\ \ \ \ \ +a_4\cdot q^2  \overline{J}_{0,8}\overline{J}_{20,64}
+a_5\cdot q^6 \overline{J}_{4,8}\overline{J}_{60,64}
+a_6\cdot q^3 \overline{J}_{4,8}\overline{J}_{52,64}
+a_7\cdot q^7 \overline{J}_{0,8}\overline{J}_{60,64}\Big ]  ,\notag
\end{align}
and
\begin{align}
G_8(b_0,b_1,b_2,b_3)&:=b_0\cdot (1+q^{2}g(q^{2};q^{16 }))+b_1\cdot (-1+q^{2}g(-q^{2};q^{16}))\\
&\ \ \ \ \ +b_2\cdot q^{5}g(q^{6};q^{16})+b_3 \cdot q^{5}g(-q^{6};q^{16}).\notag
\end{align}
\end{theorem}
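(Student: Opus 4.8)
The plan is to follow the blueprint of Theorem \ref{theorem:D4-deviants}, exploiting the decomposition $D(a,8)=\tfrac12 D(a,4)+O(a)$. This comes from splitting the root-of-unity sum (\ref{equation:rankdeviant-def}) into even and odd $j$: since the even eighth roots are exactly the fourth roots, $\frac18\sum_{j\text{ even}}\zeta_8^{-aj}(1-\zeta_8^j)(1+\zeta_8^jg(\zeta_8^j;q))=\tfrac12 D(a,4)$, while the odd part is $O(a):=\tfrac18\zeta_8^{-a}\sum_{k=0}^{3}i^{-ak}(1-\zeta_8 i^k)\bigl(1+\zeta_8 i^k g(\zeta_8 i^k;q)\bigr)$. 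So two things are needed: re-express the already-known $\tfrac12 D(a,4)$ in the base-$q^{64}$ language of $\vartheta_8$, and compute $O(a)$ from scratch.

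The first task is purely formal. Starting from the $\vartheta_4$-form of $D(a,4)$ in Theorem \ref{theorem:D4-deviants}, one applies the splitting identity (\ref{equation:jsplit}) to each of the base-$q^{16}$ theta functions $\overline{J}_{6,16}$ and $\overline{J}_{14,16}$ (folding $\overline{J}_{14,16}$ into $\overline{J}_{2,16}$ with (\ref{equation:1.7})--(\ref{equation:1.8}) first); every $\vartheta_4$ summand then splits into two of the eight building blocks $\overline{J}_{4,8}\overline{J}_{28,64}$, $q^4\overline{J}_{0,8}\overline{J}_{52,64}$, $q\overline{J}_{4,8}\overline{J}_{20,64}$, $\dots$ of $\vartheta_8$, exactly as (\ref{equation:D4-rewrite-1})--(\ref{equation:D4-rewrite-3}) did one level down. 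The $G_4$-part of $D(a,4)$ already lives at base $q^{16}$ and simply supplies the $b_1$- and $b_3$-slots (the negative-argument $g$'s) of $G_8$.

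For $O(a)$ I would run the same two-step reduction used for Proposition \ref{proposition:D4-deviants}. Expand $O(a)$ and pair the four $g$-values into the symmetric pairs $g(\pm\zeta_8;q)$ and $g(\pm\zeta_8^3;q)$; here $x^2\in\{i,-i\}$, so $j(x^2;q^2)\neq0$ and, in contrast to the $x=\pm1$ term that fed $\tfrac12 D(a,4)$, no singular limits arise. Applying Corollary \ref{corollary:rootsof1} once sends everything to $g(\pm qi;q^4)$; applying Corollary \ref{corollary:rootsof1} (or Proposition \ref{proposition:gsplit}) a second time, now at base $q^4$, sends these to $g(\pm q^2;q^{16})$ and $g(\pm q^6;q^{16})$, which are exactly the positive-argument $b_0$- and $b_2$-slots of $G_8$. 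The theta quotients picked up along the way must then be simplified down to the $\vartheta_8$ basis using (\ref{equation:jsplit}), (\ref{equation:1.12}), the Weierstrass relation (\ref{equation:Weierstrass}), and --- I expect --- the Hecke-type identities (\ref{equation:hecke-phi-id})--(\ref{equation:hecke-phi-id-alt}), which are precisely tailored to combine products of thetas at bases $q^4$ and $q^8$. (One could equally forgo the even/odd split and compute $D(a,8)$ in one pass directly from (\ref{equation:rankdeviant-def}); then the lone $g(-1;q)$ must be paired with a formal $g(1;q)$ of net coefficient zero, so the divergences from $j(1;q^2)=0$ cancel and $j(x;q)/j(x^2;q^2)$ has the finite $x\to1$ limit evaluated via (\ref{equation:jsplit}), (\ref{equation:1.10}), (\ref{equation:1.12}) --- the same limit that produced the $\overline{J}_{1,2}\overline{J}_{1,4}/J_4$ and $J_{1,2}J_{1,4}/J_4$ terms in (\ref{equation:D04-prefinal}).)

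I would close by checking the five dissections against $D(0,8)+2D(1,8)+2D(2,8)+2D(3,8)+D(4,8)=0$ (the ranks partition $p(n)$) and against the reductions $D(0,8)+D(4,8)=D(0,4)$, $D(1,8)+D(3,8)=D(1,4)$, $2D(2,8)=D(2,4)$ (this last forcing $O(2)=0$, a clean sanity check). The main obstacle is not conceptual: it is the sheer bulk of the theta-function bookkeeping --- two layers of the $g$-transformation plus the reduction of mixed-base theta quotients into the eight-term $\vartheta_8$ basis --- and I expect the final theta simplification, namely massaging the raw output into a shape where (\ref{equation:jsplit}) alone finishes it, to be where the Weierstrass and Hecke-type identities of Section \ref{section:prelim} do the real work.
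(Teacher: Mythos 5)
Your proposal is correct and follows essentially the same route as the paper: the paper likewise evaluates $D(a,8)$ from the root-of-unity sum $(\ref{equation:rankdeviant-def})$, applies Corollary \ref{corollary:rootsof1} twice (handling the singular $x\to1$ limit via $(\ref{equation:jsplit})$ and $(\ref{equation:1.12})$ and combining the $\zeta_8$-quotients with $(\ref{equation:1.8})$, $(\ref{equation:1.7})$, $(\ref{equation:1.10})$, $(\ref{equation:H1Thm1.2B})$), and then converts the resulting base-$q^{16}$ dissection (its Proposition \ref{proposition:D8-deviants}) into the $\vartheta_8$ basis using $(\ref{equation:base-splits})$. The only organizational difference is that the paper re-derives the even-root contribution from scratch rather than importing it as $\tfrac12 D(a,4)$; your shortcut is sound (your check $O(2)=0$ matches $2D(2,8)=D(2,4)$ in the stated coefficients), and the Weierstrass and Hecke-type identities you anticipate are in fact not needed for this theorem --- they enter only in Section \ref{section:ABCKM-Thm1.7} and the final section.
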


Theorem \ref{theorem:D8-deviants} is an immediate consequence of the following proposition:
\begin{proposition} \label{proposition:D8-deviants} We have the following $2$-dissections:
{\allowdisplaybreaks \begin{align}
D(0,8)&=2+q^2g(q^2;q^{16}) -q^2g(-q^2;q^{16})\label{equation:D08-deviant-pre} \\
&\ \ \ \ \ -\frac{\overline{J}_{4,8}\overline{J}_{6,16}}{J_4}
-\frac{\overline{J}_{4,8}J_{6,16}}{J_4}
+\frac{1}{4}\frac{\overline{J}_{1,2}\overline{J}_{1,4}}{J_4} 
+\frac{1}{8}\frac{J_{1,2}J_{1,4}}{J_4}
+\frac{1}{2}\frac{J_8\overline{J}_{1,2}}{J_{4}^2J_{16}}\cdot J_{1,8}\overline{J}_{3,8}, \notag \\
D(1,8)&=-1-\frac{1}{2}q^2g(q^2;q^{16})+\frac{1}{2}q^2g(-q^2;q^{16})
+\frac{1}{2}q^5g(q^6;q^{16}) +\frac{1}{2}q^5g(-q^{6};q^{16})\label{equation:D18-deviant-pre} \\
&\ \ \ \ \   +\frac{1}{2}\frac{\overline{J}_{4,8}J_{1,4}}{J_4}
-\frac{1}{2}q\frac{\overline{J}_{4,8}J_{2,16}}{J_4}
+\frac{1}{2}\frac{\overline{J}_{4,8}J_{6,16}}{J_4}-\frac{1}{8}\frac{J_{1,2}J_{1,4}}{J_4} 
+\frac{1}{2}q\frac{\overline{J}_{1,2}J_{14,16}}{J_4},\notag \\
D(2,8)&=-q^5g(-q^6;q^{16})+q\frac{\overline{J}_{4,8}\overline{J}_{14,16}}{J_4}
-\frac{1}{4}\frac{\overline{J}_{1,2}\overline{J}_{1,4}}{J_4} 
+\frac{1}{8}\frac{J_{1,2}J_{1,4}}{J_4},\label{equation:D28-deviant-pre} \\
D(3,8)&= \frac{1}{2}q^2g(q^2;q^{16})+\frac{1}{2}q^2g(-q^2;q^{16})
-\frac{1}{2}q^5g(q^6;q^{16}) +\frac{1}{2}q^5g(-q^{6};q^{16}) \label{equation:D38-deviant-pre}\\
&\ \ \ \ \  +\frac{1}{2}\frac{\overline{J}_{4,8}J_{1,4}}{J_4}
+\frac{1}{2}q\frac{\overline{J}_{4,8}J_{2,16}}{J_4}
-\frac{1}{2}\frac{\overline{J}_{4,8}J_{6,16}}{J_{4}}
-\frac{1}{8}\frac{J_{1,2}J_{1,4}}{J_4} 
-\frac{1}{2}q \frac{\overline{J}_{1,2}J_{2,16}}{J_4},  \notag \\
D(4,8)&=-q^2g(q^2;q^{16}) -q^2g(-q^2;q^{16})\label{equation:D48-deviant-pre} \\
&\ \ \ \ \ -\frac{\overline{J}_{4,8}\overline{J}_{6,16}}{J_4}
+\frac{\overline{J}_{4,8}J_{6,16}}{J_4}
+\frac{1}{4}\frac{\overline{J}_{1,2}\overline{J}_{1,4}}{J_4} 
+\frac{1}{8}\frac{J_{1,2}J_{1,4}}{J_4}
-\frac{1}{2}\frac{J_8\overline{J}_{1,2}}{J_{4}^2J_{16}}\cdot J_{1,8}\overline{J}_{3,8}. \notag
\end{align}}%
\end{proposition}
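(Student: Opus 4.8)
The plan is to follow the template of the proof of Proposition~\ref{proposition:D4-deviants}, now with $M=8$. Starting from the rank-deviation formula~(\ref{equation:rankdeviant-def}) with $\zeta_8$ a primitive eighth root of unity, one writes
\[
D(a,8)=\frac{1}{8}\sum_{j=0}^{7}\zeta_8^{-aj}\big(1-\zeta_8^{j}\big)\big(1+\zeta_8^{j}g(\zeta_8^{j};q)\big)
\]
and regroups. The eighth roots of unity occur in the four pairs $\{\pm1\}$, $\{\pm i\}$, $\{\pm\zeta_8\}$, $\{\pm\zeta_8^{3}\}$, so for each $a$ the right-hand side is a $\C$-linear combination (with coefficients built from powers of $i$ and the factors $1-\zeta_8^{j}$) of the symmetric and antisymmetric sums $g(x;q)\pm g(-x;q)$ for $x\in\{1,i,\zeta_8,\zeta_8^{3}\}$, together with an elementary rational part.

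\emph{First reduction.} Apply Corollary~\ref{corollary:rootsof1} to each of the four pairs. For $x=i$ one has $x^{2}=-1$, so $j(x^{2};q^{2})=\overline{J}_{0,2}\ne0$ and no limit is needed; likewise for $x=\zeta_8,\zeta_8^{3}$ one has $x^{2}=\pm i$ and every theta value occurring is finite. For the pair $\{\pm1\}$ the factor $j(x^{2};q^{2})$ has a simple zero at $x=1$, exactly as in the mod-$4$ computation: $g(1;q)$ itself diverges, so one keeps $x$ generic, pairs the two singular terms $j(-qx^{2};q^{4})^{-1}$ and $j(-q^{3}x^{2};q^{4})^{-1}$, uses~(\ref{equation:jsplit}) to recognize the combined numerator as $j(-qx^{2};q^{4})-xj(-q^{3}x^{2};q^{4})=j(x;q)$, and then lets $x\to1$ via $j(x;q)\sim(1-x)J_{1}^{3}$ and $j(x^{2};q^{2})\sim2(1-x)J_{2}^{3}$. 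After this step each $D(a,8)$ is expressed through $g(q;q^{4})$, $g(-q;q^{4})$, $g(qi;q^{4})$, $g(-qi;q^{4})$, an explicit rational constant, and theta quotients of moduli $q^{2},q^{4},q^{8}$.

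\emph{Second reduction.} Now apply Corollary~\ref{corollary:rootsof1} (or, for the lone $g$'s, Proposition~\ref{proposition:gsplit}) once more at modulus $q^{4}$ to the pairs $\{g(q;q^{4}),g(-q;q^{4})\}$ and $\{g(qi;q^{4}),g(-qi;q^{4})\}$. Since $(qi)^{2}=-q^{2}$, the antisymmetric sums yield the pieces $g(-q^{4}x^{-2};q^{16})$ with argument $-q^{2}$ (the case $x=q$) and $+q^{2}$ (the case $x=qi$), while the symmetric sums yield $g(-q^{4}x^{2};q^{16})$ with argument $-q^{6}$ resp.\ $+q^{6}$; the leftover $-x^{-1}$ and $2q^{-1}x^{-3}$ terms feed the rational and theta parts. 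These four mock pieces are exactly what is packaged in $G_8(b_0,b_1,b_2,b_3)$.

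The only nonroutine part is the theta-function bookkeeping needed to collapse the resulting mass of quotients—those carrying the complex arguments $\pm i$ and $\pm qi$, together with the limit contributions from the $\{\pm1\}$ pair—into the stated closed forms. The quotients containing $j(\pm i;q^{2})$ and $j(\pm qi;q^{4})$ are reduced by~(\ref{equation:1.12}) with $n=2$ and $n=4$ and by~(\ref{equation:1.11}), which turn products over $\{\pm1\}$ and over $\{\pm1,\pm i\}$ into single theta functions; the quintuple-product identity~(\ref{equation:H1Thm1.0}) and the Weierstrass relation~(\ref{equation:Weierstrass}) handle the remaining regroupings, and at each stage the singular parts produced by the $\{\pm1\}$ pair must be checked to cancel. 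The one genuinely new feature relative to the mod-$4$ case is the term $\tfrac{1}{2}\,\dfrac{J_{8}\overline{J}_{1,2}}{J_{4}^{2}J_{16}}\,J_{1,8}\overline{J}_{3,8}$ occurring in $D(0,8)$ (with opposite sign in $D(4,8)$): I expect it to emerge by converting a sum of two products of the form $j(\cdot;q^{4})j(\cdot;q^{8})$ into a single such product via Proposition~\ref{proposition:hecke-phi-id} or Proposition~\ref{proposition:hecke-phi-id-alt}—which is the reason those identities were recorded in Section~\ref{section:prelim}—and this is the step I anticipate being the main obstacle. Carrying out these simplifications and collecting the coefficient of each $g$-term and each theta quotient gives~(\ref{equation:D08-deviant-pre})--(\ref{equation:D48-deviant-pre}); as a consistency check, the five resulting expressions satisfy $D(0,8)+2D(1,8)+2D(2,8)+2D(3,8)+D(4,8)=0$, reflecting $\sum_{a}N(a,8;n)=p(n)$.
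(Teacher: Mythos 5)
Your outline matches the paper's proof essentially step for step: two passes of Corollary \ref{corollary:rootsof1} (with the limit $x\to1$ for the $\{\pm1\}$ pair handled exactly as in the mod-$4$ case via (\ref{equation:jsplit}) and (\ref{equation:1.12})), followed by theta bookkeeping, and your identification of which arguments $\pm q^{2},\pm q^{6}$ arise from which symmetric/antisymmetric pairs is correct, as is the closing consistency check. The one place your plan points at the wrong tool is precisely the step you flag as the main obstacle: the term $\tfrac12\,J_{8}\overline{J}_{1,2}J_{1,8}\overline{J}_{3,8}/(J_{4}^{2}J_{16})$ does not come from Proposition \ref{proposition:hecke-phi-id} or Proposition \ref{proposition:hecke-phi-id-alt} (the paper reserves those for Section \ref{section:ABCKM-Thm1.7}); instead one reduces $j(\pm i;q^{2})$ to base $q^{4}$ by (\ref{equation:1.10}), notes $j(\pm iq^{2};q^{4})=J_{8}^{2}/J_{16}$, and then collapses $j(-iq;q^{4})j(-i;q^{4})+j(iq;q^{4})j(i;q^{4})=2J_{1,8}\overline{J}_{3,8}$ by (\ref{equation:H1Thm1.2B}). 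Neither the quintuple product identity nor the Weierstrass relation is needed in this proof.
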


\begin{proof}[Proof of Theorem \ref{theorem:D8-deviants}]  Using $(\ref{equation:jsplit})$ and $(\ref{equation:H1Thm1.1})$ gives
{\allowdisplaybreaks \begin{align}
 \frac{J_8}{J_{4}^2J_{16}}\cdot \overline{J}_{1,2}\cdot J_{1,8}\overline{J}_{3,8}
&= \frac{J_8}{J_4^2J_{16}}
\cdot \Big [ \overline{J}_{4,8}+q\overline{J}_{0,8}\Big ] \cdot  \Big [ J_{6,16}J_{12,16}-qJ_{14,16}J_{4,16}\Big ] \label{equation:D8-rewrite-1} \\
&=\frac{1}{J_4}\cdot \Big [ \overline{J}_{4,8}J_{6,16}-q^2\overline{J}_{0,8}J_{14,16}\Big ] 
 + q\cdot \frac{1}{J_4}\cdot \Big [ \overline{J}_{0,8}J_{6,16}-\overline{J}_{4,8}J_{14,16}\Big ]. \notag 
\end{align}}%
Four more consequences of $(\ref{equation:jsplit})$ read
\begin{equation}
\begin{split}
\overline{J}_{6,16}=\overline{J}_{28,64}+q^6\overline{J}_{60,64}, \ 
\overline{J}_{2,16}=\overline{J}_{20,64}+q^2\overline{J}_{52,64}, \\
J_{6,16}=\overline{J}_{28,64}-q^6\overline{J}_{60,64}, \ 
J_{2,16}=\overline{J}_{20,64}-q^2\overline{J}_{52,64}. \label{equation:base-splits}
\end{split}
\end{equation}
Rewrite Proposition \ref{proposition:D8-deviants} using $(\ref{equation:jsplit})$ with $(\ref{equation:D4-rewrite-1})$--$(\ref{equation:D4-rewrite-3})$, $(\ref{equation:D8-rewrite-1})$, $(\ref{equation:base-splits})$  and collect terms.
\end{proof}

\begin{proof}[Proof of Proposition \ref{proposition:D8-deviants}]
The proofs for the five identities are all similar, so we will only do the first two.  Using (\ref{equation:rankdeviant-def}), we have
{\allowdisplaybreaks \begin{align}
D(0,8)&=1+\frac{1}{8}\Big [  -\big [ g(1;q)+g(-1;q)\big ]+\big [g(1;q)-g(-1;q)\big ] \label{equation:D08-g-form}\\
&\ \ \ \ \ \ \ \ \ \ +\big [ g(i;q)+g(-i;q)\big ]+i\big [g(i;q)-g(-i;q) \big ]\notag\\
&\ \ \ \ \ \ \ \ \ \ -i\big [ g(\zeta_8;q)+g(-\zeta_8;q)\big ]+\zeta_8\big [ g(\zeta_8;q)-g(-\zeta_8;q)\big ]\notag\\
&\ \ \ \ \ \ \ \ \ \ +i[g(\zeta_8^{-1};q) +g(-\zeta_8^{-1};q) \big ] +\zeta_8^{-1}\big [ g(\zeta_8^{-1};q) -g(-\zeta_8^{-1};q)\big ]\Big ].\notag
\end{align}}%
Similarly, we have
{\allowdisplaybreaks \begin{align}
D(1,8)&=-1+\frac{1}{8}\Big [ \big [g(1;q) +g(-1;q)\big ] -\big [ g(1;q)-g(-1;q)\big ] \label{equation:D18-g-form} \\
&\ \ \ \ \ \ \ \ \ \ +\big [g(i;q)+g(-i;q) \big ] -i\big [g(i;q)-g(-i;q) \big ] \notag \\
&\ \ \ \ \ \ \ \ \ \ +\big [ g(\zeta_8;q)+g(-\zeta_8;q)\big ] -\zeta_8\big [ g(\zeta_8;q)-g(-\zeta_8;q)\big ] \notag \\
&\ \ \ \ \ \ \ \ \ \ +\big [g(\zeta_8^{-1};q)+ g(-\zeta_8^{-1};q)\big ] -\zeta_8^{-1} \big [g(\zeta_8^{-1};q)-g(-\zeta_8^{-1};q) \big ] \Big ].\notag
\end{align}}%

Applying Corollary \ref{corollary:rootsof1} to (\ref{equation:D08-g-form}) and combining terms produces
{\allowdisplaybreaks \begin{align*}
D(0,8)&=1+\frac{1}{8}\Big [-2+ 4qg(-q;q^4)\\
&\ \ \ \ \ \ \ \ \ \ +\lim_{x\rightarrow 1}2J_2\overline{J}_{1,4}^2\Big [\frac{1}{xj(-q^3x^2;q^4)j(x^2;q^2)}-\frac{1}{j(-qx^2;q^4)j(x^2;q^2)} \Big ] \\
&\ \ \ \ \ +\big [ -2qg(q;q^4)+\frac{2J_2\overline{J}_{1,4}^2}{J_{1,4}\overline{J}_{0,2}}\big ] 
+i\big [ 2i+2iqg(q;q^4)+\frac{2J_2\overline{J}_{1,4}^2}{iJ_{3,4}\overline{J}_{0,2}}\big ] \\
&\ \ \ \ \ -i\big [ -2qg(-iq;q^4)+\frac{2J_2\overline{J}_{1,4}^2}{j(-iq;q^4)j(i;q^2)}\big ] \\
&\ \ \ \ \ \ \ \ \ \ +\zeta_8\big [ -2\zeta_8^{-1}-2\zeta_8qg(iq;q^4)+\zeta_8^{-1}\frac{2J_2\overline{J}_{1,4}^2}{j(-iq^3;q^4)j(i;q^2)}\big ] \\
&\ \ \ \ \ +i\big [ -2qg(iq;q^4)+\frac{2J_2\overline{J}_{1,4}^2}{j(iq;q^4)j(-i;q^2)}\big ] \\
&\ \ \ \ \ \ \ \ \ \ +\zeta_8^{-1}\big [ -2\zeta_8-2\zeta_8^{-1}qg(-iq;q^4)+\zeta_8\frac{2J_2\overline{J}_{1,4}^2}{j(iq^3;q^4)j(-i;q^2)}\big ] \Big ] \\
&=1+\frac{1}{8}\Big [-8 -4qg(q;q^4) + 4qg(-q;q^4)\\
&\ \ \ \ \ +\frac{4J_2\overline{J}_{1,4}^2}{J_{1,4}\overline{J}_{0,2}} 
+\lim_{x\rightarrow 1}2J_2\overline{J}_{1,4}^2
\Big [\frac{j(-qx^2;q^4)-xj(-q^3x^2;q^4)}{xj(-q^3x^2;q^4)j(-qx^2;q^4)j(x^2;q^2)} \Big ] \\ 
&\ \ \ \ \  -4iqg(iq;q^4)+4iqg(-iq;q^4)-i\frac{2J_2\overline{J}_{1,4}^2}{j(-iq;q^4)j(i;q^2)}
+\frac{2J_2\overline{J}_{1,4}^2}{j(-iq^3;q^4)j(i;q^2)} \\
&\ \ \ \ \  +i\frac{2J_2\overline{J}_{1,4}^2}{j(iq;q^4)j(-i;q^2)} 
+\frac{2J_2\overline{J}_{1,4}^2}{j(iq^3;q^4)j(-i;q^2)} \Big ]. 
\end{align*}}%
We rewrite the first quotient and use (\ref{equation:jsplit}) and (\ref{equation:1.12}) to evaluate the expression inside the limit.  We combine pairwise the last four quotients using (\ref{equation:1.8}) and (\ref{equation:1.7}).  This gives
{\allowdisplaybreaks \begin{align*}
D(0,8)
&=\frac{1}{8}\Big [ -4qg(q;q^4) + 4qg(-q;q^4)
  +\frac{2\overline{J}_{1,2}\overline{J}_{1,4}}{J_4} +\frac{J_{1,2}J_{1,4}}{J_4}\\ 
&\ \ \ \ \  -4iqg(iq;q^4)+4iqg(-iq;q^4)+\frac{4J_2\overline{J}_{1,4}^2}{j(-iq;q^4)j(-i;q^2)}
+\frac{4J_2\overline{J}_{1,4}^2}{j(iq;q^4)j(i;q^2)} \Big ]. 
\end{align*}}%
Regrouping terms and combining the last two quotients gives
{\allowdisplaybreaks \begin{align*}
D(0,8)&=\frac{1}{8}\Big [ -4q\big [g(q;q^4) -g(-q;q^4)\big]  -4iq\big [ g(iq;q^4)-g(-iq;q^4)\big ]
 +\frac{2\overline{J}_{1,2}\overline{J}_{1,4}}{J_4}\\
 &\ \ \ \ \  +\frac{J_{1,2}J_{1,4}}{J_4}
   +\frac{4J_2\overline{J}_{1,4}^2}{\overline{J}_{0,8}\overline{J}_{0,4}} \cdot \frac{J_8J_4}{J_4^2J_2^2}
\cdot \big [j(-iq;q^4)j(-i;q^2)+j(iq;q^4)j(i;q^2) \big ] \Big ]. 
\end{align*}}%
Using (\ref{equation:1.10}) and noting that $j(iq^2;q^4)=j(-iq^2;q^4)=J_8^2/J_{16}$, we have
{\allowdisplaybreaks \begin{align*}
D(0,8)
&=\frac{1}{8}\Big [ -4q\big [g(q;q^4) -g(-q;q^4)\big]  -4iq\big [ g(iq;q^4)-g(-iq;q^4)\big ] \\
&\ \ \ \ \ +\frac{2\overline{J}_{1,2}\overline{J}_{1,4}}{J_4} +\frac{J_{1,2}J_{1,4}}{J_4}\\
&\ \ \ \ \  +\frac{4J_2\overline{J}_{1,4}^2}{\overline{J}_{2,8}\overline{J}_{0,4}} \cdot \frac{J_8J_4}{J_4^2J_2^2}
\cdot \frac{J_2}{J_4^2}\cdot \frac{J_8^2}{J_{16}}\cdot \big [j(-iq;q^4)j(-i;q^4)+j(iq;q^4)j(i;q^4) \big ] \Big ] \\
&=\frac{1}{8}\Big [ -4q\big [g(q;q^4) -g(-q;q^4)\big]  -4iq\big [ g(iq;q^4)-g(-iq;q^4)\big ] \\
&\ \ \ \ \ +\frac{2\overline{J}_{1,2}\overline{J}_{1,4}}{J_4} +\frac{J_{1,2}J_{1,4}}{J_4}
   +\frac{4J_2\overline{J}_{1,4}^2}{\overline{J}_{2,8}\overline{J}_{0,4}} \cdot \frac{J_8J_4}{J_4^2J_2^2}
\cdot \frac{J_2}{J_4^2}\cdot \frac{J_8^2}{J_{16}}\cdot2J_{1,8}\overline{J}_{3,8}\Big ], 
\end{align*}}%
where we have used (\ref{equation:H1Thm1.2B}).   Rewriting the last term we have
{\allowdisplaybreaks \begin{align*}
D(0,8)
&=\frac{1}{8}\Big [ -4q\big [g(q;q^4) -g(-q;q^4)\big]  -4iq\big [ g(iq;q^4)-g(-iq;q^4)\big ] \\
&\ \ \ \ \ +\frac{2\overline{J}_{1,2}\overline{J}_{1,4}}{J_4} +\frac{J_{1,2}J_{1,4}}{J_4}
+ \frac{4J_8\overline{J}_{1,2}}{J_{4}^2J_{16}}\cdot J_{1,8}\overline{J}_{3,8}\Big ]. 
\end{align*}}%
Using Corollary \ref{corollary:rootsof1}, rewriting the new theta quotients and collecting terms, we have
{\allowdisplaybreaks \begin{align*}
D(0,8)
&=\frac{1}{8}\Big [ -4q\big [-2q^{-1}+2qg(-q^2;q^{16})+\frac{2J_{8}\overline{J}_{4,16}^2}{q\overline{J}_{14,16}J_{2,8}} \big]\\
&\ \ \ \ \   -4iq\big [ 2iq^{-1}+2iqg(q^2;q^{16})+\frac{2J_{8}\overline{J}_{4,16}^2}{iqJ_{14,16}\overline{J}_{2,8}}\big ] \\
&\ \ \ \ \ +\frac{2\overline{J}_{1,2}\overline{J}_{1,4}}{J_4} +\frac{J_{1,2}J_{1,4}}{J_4}
+\frac{4J_8\overline{J}_{1,2}}{J_{4}^2J_{16}}\cdot J_{1,8}\overline{J}_{3,8}\Big ] \\
&=2+q^2g(q^2;q^{16}) -q^2g(-q^2;q^{16})\\
&\ \ \ \ \ -\frac{\overline{J}_{4,8}\overline{J}_{6,16}}{J_4}
-\frac{\overline{J}_{4,8}J_{6,16}}{J_4}
+\frac{1}{4}\frac{\overline{J}_{1,2}\overline{J}_{1,4}}{J_4}
+\frac{1}{8}\frac{J_{1,2}J_{1,4}}{J_4}
+\frac{1}{2} \frac{J_8\overline{J}_{1,2}}{J_{4}^2J_{16}}\cdot J_{1,8}\overline{J}_{3,8}, 
\end{align*}}%
where we have rewritten products and combined terms.

Applying Corollary \ref{corollary:rootsof1} to (\ref{equation:D18-g-form}) and combining terms produces
{\allowdisplaybreaks \begin{align*}
D(1,8)&=-1+\frac{1}{8}\Big [2- 4qg(-q;q^4)\\
&\ \ \ \ \ \ \ \ \ \ -\lim_{x\rightarrow 1}2J_2\overline{J}_{1,4}^2\Big [\frac{1}{xj(-q^3x^2;q^4)j(x^2;q^2)}-\frac{1}{j(-qx^2;q^4)j(x^2;q^2)} \Big ] \\
&\ \ \ \ \ +\big [ -2qg(q;q^4)+\frac{2J_2\overline{J}_{1,4}^2}{J_{1,4}\overline{J}_{0,2}}\big ] 
-i\big [ 2i+2iqg(q;q^4)+\frac{2J_2\overline{J}_{1,4}^2}{iJ_{3,4}\overline{J}_{0,2}}\big ] \\
&\ \ \ \ \ +\big [ -2qg(-iq;q^4)+\frac{2J_2\overline{J}_{1,4}^2}{j(-iq;q^4)j(i;q^2)}\big ] \\
&\ \ \ \ \ \ \ \ \ \ -\zeta_8\big [ -2\zeta_8^{-1}-2\zeta_8qg(iq;q^4)+\zeta_8^{-1}\frac{2J_2\overline{J}_{1,4}^2}{j(-iq^3;q^4)j(i;q^2)}\big ] \\
&\ \ \ \ \ +\big [ -2qg(iq;q^4)+\frac{2J_2\overline{J}_{1,4}^2}{j(iq;q^4)j(-i;q^2)}\big ] \\
&\ \ \ \ \ \ \ \ \ \ -\zeta_8^{-1}\big [ -2\zeta_8-2\zeta_8^{-1}qg(-iq;q^4)+\zeta_8\frac{2J_2\overline{J}_{1,4}^2}{j(iq^3;q^4)j(-i;q^2)}\big ] \Big ] \\
&=-1+\frac{1}{8}\Big [8-4qg(-q;q^4) \\
&\ \ \ \ \ -2q\big [ g(iq;q^4)+g(-iq;q^4)\big ]+2iq\big [ g(iq;q^4)-g(-iq;q^4)\big] \\
&\ \ \ \ \  -\frac{J_{1,2}J_{1,4}}{J_4} 
 +\frac{2J_2\overline{J}_{1,4}^2}{j(-iq;q^4)j(i;q^2)} -\frac{2J_2\overline{J}_{1,4}^2}{j(-iq^3;q^4)j(i;q^2)} \\
&\ \ \ \ \  +\frac{2J_2\overline{J}_{1,4}^2}{j(iq;q^4)j(-i;q^2)} -\frac{2J_2\overline{J}_{1,4}^2}{j(iq^3;q^4)j(-i;q^2)} \Big ], 
\end{align*}}%
where we have used (\ref{equation:jsplit}) and (\ref{equation:1.12}) to evaluate the limit.  Next we have
{\allowdisplaybreaks \begin{align*}
D(1,8)&=\frac{1}{8}\Big [-4qg(-q;q^4) 
 -2q\big [ g(iq;q^4)+g(-iq;q^4)\big ]+2iq\big [ g(iq;q^4)-g(-iq;q^4)\big] \\
&\ \ \ \ \  -\frac{J_{1,2}J_{1,4}}{J_4} 
 +(1+i)\frac{2J_2\overline{J}_{1,4}^2}{j(-iq;q^4)j(i;q^2)} -(1+i)\frac{2J_2\overline{J}_{1,4}^2}{j(iq;q^4)j(i;q^2)} \Big ], 
\end{align*}}%
where we have combined theta quotients using (\ref{equation:1.8}) and (\ref{equation:1.7}).  Combining fractions and using (\ref{equation:jsplit}) gives
{\allowdisplaybreaks \begin{align*}
D(1,8)
&=\frac{1}{8}\Big [-4qg(-q;q^4)
 -2q\big [ g(iq;q^4)+g(-iq;q^4)\big ]+2iq\big [ g(iq;q^4)-g(-iq;q^4)\big] \\
&\ \ \ \ \  -\frac{J_{1,2}J_{1,4}}{J_4} 
+(1+i)\frac{2J_2\overline{J}_{1,4}^2}{j(-iq;q^4)j(iq;q^4)j(i;q^2)} \big [ j(iq;q^4)-j(-iq;q^4)\big ]\Big ] \\
&=\frac{1}{8}\Big [-4q g(-q;q^4)
 -2q\big [ g(iq;q^4)+g(-iq;q^4)\big ]  -\frac{J_{1,2}J_{1,4}}{J_4} \\
 &\ \ \ \ \ +2iq\big [ g(iq;q^4)-g(-iq;q^4)\big] 
+\frac{1+i}{1-i}\frac{2J_2\overline{J}_{1,4}^2}{\overline{J}_{2,8}} \frac{J_8}{J_4^2} \frac{J_4}{J_2J_8} 
\big [  -2iqJ_{14,16}\big ] \Big ] \\
&=\frac{1}{8}\Big [-4q g(-q;q^4)  -2q\big [ g(iq;q^4)+g(-iq;q^4)\big ]\\
&\ \ \ \ \ +2iq\big [ g(iq;q^4)-g(-iq;q^4)\big] 
 -\frac{J_{1,2}J_{1,4}}{J_4} 
+4q\frac{\overline{J}_{1,2}J_{14,16}}{J_4}\Big ], 
\end{align*}}%
where we have simplified the last quotient.  Proposition \ref{proposition:gsplit} and Corollary \ref{corollary:rootsof1} yield
{\allowdisplaybreaks \begin{align*}
D(1,8)
&=\frac{1}{8}\Big [-4q\big [   q^{-1}-qg(-q^2;q^{16})-q^4g(-q^{6};q^{16})-\frac{J_{8}J_{8,16}^2}{q\overline{J}_{1,4}\overline{J}_{6,8}}   \big ] \\
&\ \ \ \ \  -2q\big [ -2q^4g(q^6;q^{16})+\frac{2J_8\overline{J}_{4,16}^2}{J_{6,16}\overline{J}_{2,8}}\big ]\\
&\ \ \ \ \ +2iq\big [ 2iq^{-1}+2iqg(q^2;q^{16})+\frac{2J_8\overline{J}_{4,16}^2}{iqJ_{14,16}\overline{J}_{2,8}}\big] 
-\frac{J_{1,2}J_{1,4}}{J_4} 
+4q\frac{\overline{J}_{1,2}J_{14,16}}{J_4}\Big ] \\
&=-1-\frac{1}{2}q^2g(q^2;q^{16})+\frac{1}{2}q^2g(-q^2;q^{16})
+\frac{1}{2}q^5g(q^6;q^{16}) +\frac{1}{2}q^5g(-q^{6};q^{16})\\
&\ \ \ \ \   +\frac{1}{2} \frac{\overline{J}_{4,8}J_{1,4}}{J_4}
 -\frac{1}{2} q \frac{\overline{J}_{4,8}J_{2,16}}{J_4}
+\frac{1}{2} \frac{\overline{J}_{4,8}J_{6,16}}{J_4}
-\frac{1}{8}\frac{J_{1,2}J_{1,4}}{J_4} 
+\frac{1}{2} q \frac{\overline{J}_{1,2}J_{14,16}}{J_4},
\end{align*}}%
where we have rewritten products and collected terms.
\end{proof}

\section{Proof of Theorem \ref{theorem:ABCKM-Thm1.7}}\label{section:ABCKM-Thm1.7}

Recalling $(\ref{equation:D08-deviant-final})$ and $(\ref{equation:D48-deviant-final})$, we have 
{\allowdisplaybreaks \begin{align*}
D&(0,8)-D(4,8)\\
&=2+2q^2g(q^2;q^{16})
-\frac{\overline{J}_{4,8}\overline{J}_{28,64}}{J_4}
+q^4  \frac{\overline{J}_{0,8}\overline{J}_{52,64}}{J_4}
-q \frac{\overline{J}_{4,8}\overline{J}_{20,64}}{J_4}
+q \frac{\overline{J}_{0,8}\overline{J}_{28,64}}{J_4} \\
&\ \ \ \ \ - q^2  \frac{\overline{J}_{0,8}\overline{J}_{20,64}}{J_4}
+q^6\frac{\overline{J}_{4,8}\overline{J}_{60,64}}{J_4}
+q^3 \frac{\overline{J}_{4,8}\overline{J}_{52,64}}{J_4}
-q^7 \frac{\overline{J}_{0,8}\overline{J}_{60,64}}{J_4}\\
&=2+2q^2g(q^2;q^{16})
-\frac{\overline{J}_{4,8}(\overline{J}_{28,64}-q^6\overline{J}_{60,64})}{J_4} 
-q^2  \frac{\overline{J}_{0,8}(\overline{J}_{20,64}-q^2\overline{J}_{52,64})}{J_4} \\
&\ \ \ \ \  -q \frac{\overline{J}_{4,8}(\overline{J}_{20,64}-q^2\overline{J}_{52,64})}{J_4} 
+q \frac{\overline{J}_{0,8}(\overline{J}_{28,64}-q^6\overline{J}_{60,64})}{J_4} \\
&=2+2q^2g(q^2;q^{16})
 -\frac{ \overline{J}_{4,8}J_{6,16}}{J_4}
-q^2 \frac{\overline{J}_{0,8}J_{14,16}}{J_4} 
-q \frac{\overline{J}_{4,8}J_{14,16}}{J_4} 
+q \frac{ \overline{J}_{0,8}J_{6,16}}{J_4}\\
&=2+2q^2g(q^2;q^{16})
-\frac{1}{J_{4}} \big [  \overline{J}_{4,8}J_{6,16}
+q^2 \overline{J}_{0,8}J_{14,16}\big ] 
+q\frac{1}{J_4} \big [   \overline{J}_{0,8}J_{6,16}
-\overline{J}_{4,8}J_{14,16}\big ],
\end{align*}}%
where we have regrouped terms, used $(\ref{equation:jsplit})$, and regrouped terms again.  Using $(\ref{equation:hecke-phi-id})$  with $x\mapsto-1$, $q\mapsto -q^2$  and $(\ref{equation:hecke-phi-id-alt})$  with $x\mapsto 1$, $q\mapsto -q^2$  yields 
{\allowdisplaybreaks \begin{align}
D&(0,8)-D(4,8)\notag \\
&=2+2q^2g(q^2;q^{16})-\frac{1}{J_4} \frac{j(-q^2;-q^6)}{J_8} \overline{J}_{6,8}J_{6,16}
+q\cdot \frac{1}{J_4} \frac{j(-q^2;-q^6)}{J_8} \overline{J}_{6,8}J_{14,16}\notag\\
&=2+2q^2g(q^2;q^{16})-\frac{1}{J_4} \frac{j(-q^2;q^{12})j(q^{8};q^{12})}{J_{6,24}J_8} \frac{J_4^2}{J_2}
\Big [ J_{6,16}-qJ_{14,16}\Big ]\notag\\
&=2+2q^2g(q^2;q^{16})-\frac{\overline{J}_{2,4}J_{6,16}}{J_4}  
+q \frac{\overline{J}_{2,4}J_{2,16}}{J_4}  ,\notag
\end{align}}%
where we have used  (\ref{equation:1.11}) and then simplified the product.

Recalling $(\ref{equation:D18-deviant-final})$ and $(\ref{equation:D38-deviant-final})$, we have 
{\allowdisplaybreaks \begin{align*}
D(1,8)-D(3,8)
&=-1- q^2g(q^2;q^{16})+q^5g(q^6;q^{16})  \\
&\ \ \ \ \   + \frac{\overline{J}_{4,8}\overline{J}_{28,64}}{J_4}
-q^4 \frac{\overline{J}_{0,8}\overline{J}_{52,64}}{J_4} 
+q^2 \frac{\overline{J}_{0,8}\overline{J}_{20,64}}{J_4} 
-q^6 \frac{\overline{J}_{4,8}\overline{J}_{60,64}}{J_4}\\
&=-1- q^2g(q^2;q^{16})+q^5g(q^6;q^{16})  \\
&\ \ \ \ \   + \frac{\overline{J}_{4,8}(\overline{J}_{28,64}-q^6\overline{J}_{60,64})}{J_4}
+q^2 \frac{\overline{J}_{0,8}(\overline{J}_{20,64}-q^2\overline{J}_{52,64})}{J_4}\\
&=-1-q^2g(q^2;q^{16})+q^5g(q^6;q^{16})
+\frac{\overline{J}_{4,8}J_{6,16}}{J_4}
+q^2  \frac{\overline{J}_{0,8}J_{14,16}}{J_4}\\
&=-1-q^2g(q^2;q^{16})+q^5g(q^6;q^{16})
+\frac{1}{J_4} \big [ \overline{J}_{4,8}J_{6,16}
+q^2 \overline{J}_{0,8}J_{14,16}\big ],
\end{align*}}%
where we have regrouped terms, used $(\ref{equation:jsplit})$, and regrouped terms again.  Using $(\ref{equation:hecke-phi-id})$  with $x\mapsto-1$, $q\mapsto -q^2$  and simplifying with $(\ref{equation:1.11})$ gives
{\allowdisplaybreaks \begin{align*}
D(1,8)-D(3,8)
&=-1+q^5g(q^6;q^{16})-q^2g(q^2;q^{16})
+\frac{1}{J_4} \frac{j(-q^2;-q^6)}{J_8} \overline{J}_{6,8}J_{6,16}\\
&=-1+q^5g(q^6;q^{16})-q^2g(q^2;q^{16})
+\frac{\overline{J}_{2,4}J_{6,16}}{J_4}.
\end{align*}}%

\section{Proof of Theorem \ref{theorem:DC4-deviants}}\label{section:DC4-deviants}
The three identities have similar proofs, so we only do the first.  Using $(\ref{equation:crankdeviant-def})$, we have
{\allowdisplaybreaks \begin{align}
D_C(0,4)
&=\frac{1}{4}\sum_{j=1}^{3}i^{-0\cdot j}\frac{(q)_{\infty}}{(i^jq)_{\infty}(i^{-j}q)_{\infty}}
=\frac{1}{4} \Big [ \frac{(q)_{\infty}}{(-q^2;q^2)_{\infty}}
+\frac{(q)_{\infty}}{(-q;q)_{\infty}^2}+\frac{(q)_{\infty}}{(-q^2;q^2)_{\infty}}\Big ]\notag \\
&=\frac{1}{4}\Big [2 \frac{J_1J_2}{J_4}+\frac{J_1^3}{J_2^2}\Big ] 
=\frac{1}{2}\frac{J_{1,2}\overline{J}_{1,4}}{J_4}+\frac{1}{4}  \frac{J_{1,2}J_{1,4}}{J_4}.\label{equation:DC04-prefinal}
\end{align}}%
Using $(\ref{equation:jsplit})$ on both theta functions in the numerator and expanding gives
\begin{equation}
\frac{J_{1,2}\overline{J}_{1,4}}{J_4}
=\frac{1}{J_4}\cdot \Big [ \overline{J}_{4,8}\overline{J}_{6,16}-q^2\overline{J}_{0,8}\overline{J}_{14,16}\Big ]
+\frac{q}{J_4}\cdot \Big [ \overline{J}_{4,8}\overline{J}_{14,16}-\overline{J}_{0,8}\overline{J}_{6,16}\Big ].\label{equation:DC-rewrite-1}
\end{equation}
Rewritting $(\ref{equation:DC04-prefinal})$ with $(\ref{equation:DC-rewrite-1})$ and $(\ref{equation:D4-rewrite-1})$  and collecting terms produces $(\ref{equation:DC04-deviant-final})$.

\section{On crank deviations modulo $8$}\label{section:crank-prelims}
We have analogous dissections for crank deviants modulo $8$.

\begin{theorem} \label{theorem:DC8-deviants} We have the following $4$-dissections:
\begin{align}
D_C(0,8)&=\vartheta_8(3,-1,1,-3,-1,3,1,-3)+\vartheta_8^{\prime}(1,-1,-1,1),\label{equation:DC08-deviant-final}\\
D_C(1,8)=D_C(7,8)&=\vartheta_8(-1,-1,1,1,-1,-1,1,1)+\vartheta_8^{\prime}(0,1,0,-1),\label{equation:DC18-deviant-final}\\
D_C(2,8)=D_C(6,8)&=\vartheta_8(-1,3,-3,1,3,-1,-3,1),\label{equation:DC28-deviant-final}\\
D_C(3,8)=D_C(5,8)&=\vartheta_8(-1,-1,1,1,-1,-1,1,1)+\vartheta_8^{\prime}(0,-1,0,1),\label{equation:DC38-deviant-final}\\
D_C(4,8)&=\vartheta_8(3,-1,1,-3,-1,3,1,-3)+\vartheta_8^{\prime}(-1,1,1,-1),\label{equation:DC48-deviant-final}
\end{align}
where
\begin{align}
\vartheta_8&(a_0,a_1,a_2,a_3,a_4,a_5,a_6,a_7)\\
&:=\frac{1}{8J_4}\Big [ a_0\cdot \overline{J}_{4,8}\overline{J}_{28,64}
+a_1\cdot q^4  \overline{J}_{0,8}\overline{J}_{52,64}
+a_2\cdot q \overline{J}_{4,8}\overline{J}_{20,64}
+a_3\cdot q \overline{J}_{0,8}\overline{J}_{28,64}\notag \\
&\ \ \ \ \ +a_4\cdot q^2  \overline{J}_{0,8}\overline{J}_{20,64}
+a_5\cdot q^6 \overline{J}_{4,8}\overline{J}_{60,64}
+a_6\cdot q^3 \overline{J}_{4,8}\overline{J}_{52,64}
+a_7\cdot q^7 \overline{J}_{0,8}\overline{J}_{60,64}\Big ]  ,\notag
\end{align}
and
\begin{equation}
\vartheta_8^{\prime}(c_0,c_1,c_2,c_3):=\frac{1}{2J_4}\Big [ c_0\cdot J_{4,8}\overline{J}_{28,64}
+c_1\cdot q J_{4,8}\overline{J}_{20,64}
+c_2\cdot q^6 J_{4,8}\overline{J}_{60,64}
+c_3\cdot  q^3 J_{4,8}\overline{J}_{52,64}\Big ] .
\end{equation}
\end{theorem}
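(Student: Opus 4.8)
The plan is to mimic the proof of Theorem \ref{theorem:DC4-deviants} in Section \ref{section:DC4-deviants}. Starting from $(\ref{equation:crankdeviant-def})$ with $M=8$ and $\zeta_8$ a primitive eighth root of unity, I pair the index $j$ with $8-j$ in the sum over $j=1,\dots,7$; since the two corresponding products agree and $\zeta_8^{-aj}+\zeta_8^{-a(8-j)}=\zeta_8^{aj}+\zeta_8^{-aj}$, this collapses the sum to
\[
D_C(a,8)=\tfrac{1}{8}\Big[(\zeta_8^{a}+\zeta_8^{-a})P_1+(i^{a}+i^{-a})P_2+(\zeta_8^{3a}+\zeta_8^{-3a})P_3+(-1)^{a}P_4\Big],
\]
where $P_1=\tfrac{(q)_\infty}{(\zeta_8 q)_\infty(\zeta_8^{-1}q)_\infty}$, $P_2=\tfrac{(q)_\infty}{(iq)_\infty(-iq)_\infty}$, $P_3=\tfrac{(q)_\infty}{(\zeta_8^{3}q)_\infty(\zeta_8^{-3}q)_\infty}$, $P_4=\tfrac{(q)_\infty}{(-q;q)_\infty^2}$. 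The products $P_2$ and $P_4$ are the ``old'' ones: exactly as in $(\ref{equation:DC04-prefinal})$ one has $P_2=J_1J_2/J_4=J_{1,2}\overline{J}_{1,4}/J_4$ and $P_4=J_1^3/J_2^2=J_{1,2}J_{1,4}/J_4$. The coefficients of $P_1$ and $P_3$ are always equal in magnitude and opposite in sign, so only the combinations $P_1+P_3$ and $\sqrt2(P_1-P_3)$ ever occur.

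The heart of the argument is evaluating these two combinations as bona fide theta quotients. Using $j(z;q)=(1-z)(q)_\infty(zq;q)_\infty(z^{-1}q;q)_\infty$ I write $P_1=(1-\zeta_8)J_1^2/j(\zeta_8;q)$ and $P_3=(1-\zeta_8^{3})J_1^2/j(\zeta_8^{3};q)$, and put them over the common denominator $j(\zeta_8;q)j(\zeta_8^{3};q)$. Applying $(\ref{equation:H1Thm1.1})$ with $x=\zeta_8$, $y=\zeta_8^{3}$, the term containing $j(-xy;q^2)=j(1;q^2)$ vanishes, leaving $j(\zeta_8;q)j(\zeta_8^{3};q)=-\zeta_8 J_{1,2}\,j(-i;q^2)$; then $(\ref{equation:jsplit})$ and $(\ref{equation:1.7})$ give $j(-i;q^2)=(1+i)J_{2,8}$, so the denominator is $-i\sqrt2\,J_{1,2}J_{2,8}$. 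For the numerators I apply $(\ref{equation:jsplit})$ twice to each of $j(\zeta_8;q)$, $j(\zeta_8^{3};q)$, rewriting them at level $q^{16}$ and tidying with $(\ref{equation:1.8})$, $(\ref{equation:1.7})$; collecting the root-of-unity coefficients via $\zeta_8\pm\zeta_8^{3}\in\{\sqrt2,i\sqrt2\}$ together with the equality $J_{14,16}=J_{2,16}$ (an instance of $(\ref{equation:1.7})$), every surviving factor of $i$ and $\sqrt2$ cancels. After the product rearrangements $J_{2,8}=J_2J_8/J_4$ and $J_{4,8}=J_4^2/J_8$ one is left with
\[
P_1+P_3=\frac{2J_{4,8}(J_{6,16}-qJ_{2,16})}{J_4},\qquad \sqrt2\,(P_1-P_3)=\frac{4qJ_{4,8}J_{2,16}}{J_4}.
\]

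It remains to split to level $q^{64}$ and assemble the answer. Applying $(\ref{equation:base-splits})$ to the two displays above yields $P_1+P_3=4\vartheta_8^{\prime}(1,-1,-1,1)$ and $\sqrt2(P_1-P_3)=8\vartheta_8^{\prime}(0,1,0,-1)$, while for $P_2$ and $P_4$ I first rewrite $J_1J_2/J_4$ and $J_1^3/J_2^2$ at level $q^{16}$ exactly as in $(\ref{equation:DC-rewrite-1})$ and $(\ref{equation:D4-rewrite-1})$ and then split once more by $(\ref{equation:base-splits})$, landing in the $\vartheta_8$ form. Substituting into the formula for $D_C(a,8)$ and collecting terms for $a=0,1,2,3,4$ (the cases $a=5,6,7$ following from $D_C(a,8)=D_C(8-a,8)$) gives precisely $(\ref{equation:DC08-deviant-final})$--$(\ref{equation:DC48-deviant-final})$; note that $a=2$ involves only $P_2,P_4$, so $D_C(2,8)=\tfrac{1}{2} D_C(2,4)$ and its dissection is merely a refinement of the one in Theorem \ref{theorem:DC4-deviants}.

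I expect the main obstacle to be the middle step: the bookkeeping of the eighth roots of unity, and in particular verifying that after the repeated use of $(\ref{equation:jsplit})$ all factors of $i$ and $\sqrt2$ truly cancel so that $P_1+P_3$ and $\sqrt2(P_1-P_3)$ emerge as genuine $q$-series. A useful consistency check is that $P_1$ and $P_3$ are real for real $q$ (being interchanged by complex conjugation, since $\overline{\zeta_8}=\zeta_8^{-1}$), so the final theta quotients must be real; a second check is $P_1P_3=J_1^2J_4/J_8$, computed directly from $\prod_{j\ \mathrm{odd}}(1-\zeta_8^{j}q^n)=1+q^{4n}$. Once $P_1\pm P_3$ are in hand, the remaining $4$-dissection is long but completely routine, of exactly the flavour of the computations already carried out for the $M=4$ cranks in Section \ref{section:DC4-deviants} and for the $M=8$ ranks in Section \ref{section:D8-prelim}.
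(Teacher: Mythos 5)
Your proposal is correct and follows essentially the same route as the paper's proof: reduce $D_C(a,8)$ via $(\ref{equation:crankdeviant-def})$ to the four products, combine the two $\zeta_8$-products over the common denominator $j(\zeta_8;q)j(\zeta_8^3;q)$ (the paper evaluates this with $(\ref{equation:1.12})$ rather than $(\ref{equation:H1Thm1.1})$, an immaterial difference), split the numerators with $(\ref{equation:jsplit})$ so the factors of $i$ and $\sqrt2$ cancel, and then pass to level $q^{64}$ with $(\ref{equation:base-splits})$; your two key evaluations $P_1+P_3=2J_{4,8}(J_{6,16}-qJ_{2,16})/J_4$ and $\sqrt2(P_1-P_3)=4qJ_{4,8}J_{2,16}/J_4$ agree with the paper's $(\ref{equation:DC08-pre-prefinal})$ and the $\vartheta_8^{\prime}$ terms. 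The only blemish is the sentence claiming the coefficients $\zeta_8^{a}+\zeta_8^{-a}$ and $\zeta_8^{3a}+\zeta_8^{-3a}$ are ``always opposite in sign'' (they are equal for $a=0,4$), but your stated conclusion and subsequent formulas already account for both cases, so nothing is affected.
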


\begin{proof}[Proof of Theorem \ref{theorem:DC8-deviants}]
The proofs for each of the five identities are all similar, so we prove only the first and third identities.    Using $(\ref{equation:crankdeviant-def})$ and collecting like terms, we have
{\allowdisplaybreaks \begin{align*}
D_C(0,8)
&=\frac{1}{8}\Big [ 2\frac{(q)_{\infty}}{(\zeta_8q;q)_{\infty}(\zeta_8^{-1}q;q)_{\infty}}
+2\frac{(q)_{\infty}}{(-\zeta_8q;q)_{\infty}(-\zeta_8^{-1}q;q)_{\infty}}\\
&\ \ \ \ \ +2\frac{(q)_{\infty}}{(iq;q)_{\infty}(-iq;q)_{\infty}}
+\frac{(q)_{\infty}}{(-q;q)_{\infty}(-q;q)_{\infty}}\Big ] \\
&=\frac{1}{8}\Big [ 2\frac{(1-\zeta_8)J_1^2}{j(\zeta_8;q)}
+2\frac{(1+\zeta_8)J_1^2}{j(-\zeta_8;q)}
+2\frac{J_{1,2}\overline{J}_{1,4}}{J_4}+\frac{J_{1,2}J_{1,4}}{J_4}\Big ],
\end{align*}}%
where we have rewritten the four terms using our theta function notation.
{\allowdisplaybreaks \begin{align*}
D_C(0,8)
&=\frac{1}{8}\Big [ 2\frac{J_1^2}{j(\zeta_8;q)j(-\zeta_8;q)}
\Big ( (1-\zeta_8){j(-\zeta_8;q)}+(1+\zeta_8)j(\zeta_8;q)\Big )\\
&\ \ \ \ \ +2\frac{J_{1,2}\overline{J}_{1,4}}{J_4}+\frac{J_{1,2}J_{1,4}}{J_4}\Big ] \\
&=\frac{1}{8}\Big [ 2\frac{J_1^2}{j(i;q^2)}\frac{J_2}{J_1^2}
\Big ( (1-\zeta_8)\Big ( j(-qi;q^4) +\zeta_8 j(qi;q^4)\Big )\\
&\ \ \ \ \  +(1+\zeta_8)\Big ( j(-qi;q^4) -\zeta_8 j(qi;q^4)\Big )\Big )
+2\frac{J_{1,2}\overline{J}_{1,4}}{J_4}+\frac{J_{1,2}J_{1,4}}{J_4}\Big ] \\
&=\frac{1}{8}\Big [ \frac{4}{1-i}\frac{J_1^2J_4}{J_2J_8}\frac{J_2}{J_1^2}
\Big (  j(-qi;q^4) - ij(qi;q^4)  \Big )
 +2\frac{J_{1,2}\overline{J}_{1,4}}{J_4}+\frac{J_{1,2}J_{1,4}}{J_4}\Big ], 
\end{align*}}%
where we have used identities $(\ref{equation:1.12})$ and $(\ref{equation:jsplit})$, distributed the products, and used the fact that $j(i;q^2)=(1-i)J_2J_8/J_4$.  Using $(\ref{equation:jsplit})$ again
\begin{align}
D_C(0,8)
&=\frac{1}{8}\Big [ \frac{4}{1-i}\frac{J_1^2J_4}{J_2J_8}\frac{J_2}{J_1^2}
\Big (  J_{6,16}+qiJ_{14,16} - i\Big (J_{6,16}-qiJ_{14,16} \Big ) \Big )
+2\frac{J_{1,2}\overline{J}_{1,4}}{J_4}+\frac{J_{1,2}J_{1,4}}{J_4}\Big ]\notag \\
&=\frac{1}{8}\Big [ \frac{4}{1-i}\frac{J_1^2J_4}{J_2J_8}\frac{J_2}{J_1^2}
\Big (  (1-i)J_{6,16}-(1-i)qJ_{14,16}   \Big )
+2\frac{J_{1,2}\overline{J}_{1,4}}{J_4}+\frac{J_{1,2}J_{1,4}}{J_4}\Big ] \notag \\
&=\frac{1}{2}\frac{J_{4,8}J_{6,16}}{J_4}
-q\frac{1}{2}\frac{J_{4,8}J_{2,16}}{J_4}
+\frac{1}{4}\frac{J_{1,2}\overline{J}_{1,4}}{J_4}+\frac{1}{8}\frac{J_{1,2}J_{1,4}}{J_4}.\label{equation:DC08-pre-prefinal}
\end{align}
Rewritting $(\ref{equation:DC08-pre-prefinal})$ with $(\ref{equation:DC-rewrite-1})$ and $(\ref{equation:D4-rewrite-1})$  and collecting terms produces $(\ref{equation:DC04-deviant-final})$ yields

{\allowdisplaybreaks \begin{align}
D_C(0,8)
&=\frac{3}{8} \frac{\overline{J}_{4,8}\overline{J}_{6,16}}{J_4}
-\frac{1}{8}q^2\frac{\overline{J}_{0,8}\overline{J}_{14,16}}{J_4}
+\frac{1}{8}q \frac{\overline{J}_{4,8}\overline{J}_{14,16}}{J_4}
-\frac{3}{8}q\frac{\overline{J}_{0,8}\overline{J}_{6,16}}{J_4}\notag \\
&\ \ \ \ \ +\frac{1}{2}\frac{J_{4,8}J_{6,16}}{J_4}
-q\frac{1}{2}\frac{J_{4,8}J_{2,16}}{J_4}.\label{equation:DC08-prefinal}
\end{align}}%
Rewriting $(\ref{equation:DC08-prefinal})$ using $(\ref{equation:base-splits})$ and collecting terms finally results in our $4$-dissection $(\ref{equation:DC08-deviant-final})$.

Using $(\ref{equation:crankdeviant-def})$ and noting pairwise cancellation, 
{\allowdisplaybreaks \begin{align}
D_C(2,8)
&=-\frac{1}{4}\frac{(q)_{\infty}}{(iq;q)_{\infty}(-iq;q)_{\infty}}
+\frac{1}{8}\frac{(q)_{\infty}}{(-q;q)_{\infty}^2}
=-\frac{1}{4}\frac{(q)_{\infty}}{(-q^2;q^2)_{\infty}}
+\frac{1}{8}\frac{(q)_{\infty}}{(-q;q)_{\infty}^2}\notag \\
&=-\frac{1}{4}\frac{J_1J_2}{J_4}
+\frac{1}{8}\frac{J_1^3}{J_2^2}
=-\frac{1}{4}\frac{J_{1,2}\overline{J}_{1,4}}{J_4}
+\frac{1}{8}\frac{J_{1,2}J_{1,4}}{J_4}.\label{equation:DC28-pre-final}
\end{align}}%
Rewriting $(\ref{equation:DC28-pre-final})$ with theta-dissections  $(\ref{equation:DC-rewrite-1})$ and $(\ref{equation:D4-rewrite-1})$ and collecting terms produces
\begin{equation}
D_C(2,8)
=-\frac{1}{8} \frac{\overline{J}_{4,8}\overline{J}_{6,16}}{J_4}
+\frac{3}{8}q^2\frac{\overline{J}_{0,8}\overline{J}_{14,16}}{J_4}
-\frac{3}{8}q \frac{\overline{J}_{4,8}\overline{J}_{14,16}}{J_4}
+\frac{1}{8}q\frac{\overline{J}_{0,8}\overline{J}_{6,16}}{J_4}.\label{equation:DC28-2-dissection}
\end{equation}
Rewriting $(\ref{equation:DC28-2-dissection})$ with $(\ref{equation:base-splits})$, we arrive our $4$-dissection  $(\ref{equation:DC28-deviant-final})$.
\end{proof}

\section{On Rank-Crank Identities of Lewis and Santa-Gadea}\label{section:rank-crank}

Theorems \ref{theorem:D8-deviants} and \ref{theorem:DC8-deviants} prove the relations $(\ref{equation:NC-10})$--$(\ref{equation:NC-17})$.  We give some examples.

\subsection{Identities $(\ref{equation:NC-14})$--$(\ref{equation:NC-17})$}
To prove $(\ref{equation:NC-14})$ and $(\ref{equation:NC-15})$, we do not need to compute the entire $4$-dissection for $D(3,8)$.  We only need to determine which terms contribute to $q$-powers $q^n$ where $n\equiv 0,1 \pmod 4$.  We see that the first line of $(\ref{equation:D38-deviant-final})$ does not contribute.  Using Corollary \ref{corollary:rootsof1}, we note that the two expressions
{\allowdisplaybreaks \begin{align}
q^2g(q^2;q^{16})+q^2g(-q^2;q^{16}) &=-2q^{18}g(-q^{20};q^{64})
+2q^2\frac{J_{32}\overline{J}_{16,64}^2}{\overline{J}_{20,64}J_{4,32}},\label{equation:g2-plus}\\
q^5g(q^6;q^{16})-q^5g(-q^6;q^{16}) &= -2q^{-1}+2q^{3}g(-q^{4};q^{64})
+2q^{-1}\frac{J_{32}\overline{J}_{16,64}^2}{\overline{J}_{60,64}J_{12,32}}\label{equation:g6-minus}
\end{align}}%
are supported on $q$-powers $q^n$ where $n\equiv 2,3\pmod 4$ respectively.  Hence contributions can only come from the last two lines in $(\ref{equation:D38-deviant-final})$.    Comparing $q$-powers $q^n$ where $n\equiv 0 \pmod 4$ in $(\ref{equation:DC28-deviant-final})$ and $(\ref{equation:D38-deviant-final})$ proves $(\ref{equation:NC-14})$.  Similiarly, comparing $q$-powers $q^n$ where $n\equiv 1 \pmod 4$ in $(\ref{equation:DC28-deviant-final})$ and $(\ref{equation:D38-deviant-final})$ proves $(\ref{equation:NC-15})$.

For $(\ref{equation:NC-16})$ and $(\ref{equation:NC-17})$, we proceed analogously.  We first  note that the top line of $D(1,8)$ in $(\ref{equation:D18-deviant-final})$ does not contribute.  Using Corollary \ref{corollary:rootsof1}, we see that
\begin{align}
q^2g(q^2;q^{16})-q^2g(-q^2;q^{16}) &= -2+2q^{12}g(-q^{12};q^{64})
+2\frac{J_{32}\overline{J}_{16,64}^2}{\overline{J}_{52,64}J_{4,32}}, \label{equation:g2-minus}\\
q^5g(q^6;q^{16})+q^5g(-q^6;q^{16}) &= -2q^{21}g(-q^{28};q^{64})
+2q^{5}\frac{J_{32}\overline{J}_{16,64}^2}{\overline{J}_{28,64}J_{12,32}}\label{equation:g6-plus}
\end{align}
are supported on $q$-powers $q^n$ where $n\equiv 0,1\pmod 4$ respectively.  Any potential contribuation can only come from the last two lines in $(\ref{equation:D18-deviant-final})$.  Comparing $q$-powers $q^n$ where $n\equiv 2 \pmod 4$ in $(\ref{equation:DC28-deviant-final})$ and $(\ref{equation:D18-deviant-final})$ proves $(\ref{equation:NC-16})$.  Likewise, comparing $q$-powers $q^n$ where $n\equiv 3 \pmod 4$ in $(\ref{equation:DC28-deviant-final})$ and $(\ref{equation:D18-deviant-final})$ proves $(\ref{equation:NC-17})$.

\subsection{Identity $(\ref{equation:NC-11})$}
Recalling $(\ref{equation:DC08-deviant-final})$, $(\ref{equation:DC18-deviant-final})$, $(\ref{equation:DC38-deviant-final})$ and $(\ref{equation:DC48-deviant-final})$, we have
{\allowdisplaybreaks \begin{align}
D_C&(0,8)+D_C(1,8)\notag \\
&=\frac{1}{4}\cdot  \frac{\overline{J}_{4,8}\overline{J}_{28,64}}{J_4}
-\frac{1}{4}\cdot q^4 \frac{\overline{J}_{0,8}\overline{J}_{52,64}}{J_4}
+\frac{1}{4}\cdot q  \frac{\overline{J}_{4,8}\overline{J}_{20,64}}{J_4}
-\frac{1}{4}\cdot q \frac{\overline{J}_{0,8}\overline{J}_{28,64}}{J_4}\label{equation:DC08-DC18-sum}\\
&\ \ \ \ \ -\frac{1}{4}\cdot q^2\frac{\overline{J}_{0,8}\overline{J}_{20,64}}{J_4}
+\frac{1}{4}\cdot  q^6 \frac{\overline{J}_{4,8}\overline{J}_{60,64}}{J_4}
+\frac{1}{4}\cdot q^3  \frac{\overline{J}_{4,8}\overline{J}_{52,64}}{J_4}
-\frac{1}{4}\cdot q^7 \frac{\overline{J}_{0,8}\overline{J}_{60,64}}{J_4}\notag \\
&\ \ \ \ \ +\frac{1}{2}\cdot \frac{J_{4,8}\overline{J}_{28,64}}{J_4}
-\frac{1}{2}\cdot q^6 \frac{J_{4,8}\overline{J}_{60,64}}{J_4},\notag \\
D_C&(3,8)+D_C(4,8)\notag \\
&= \frac{1}{4} \cdot \frac{ \overline{J}_{4,8}\overline{J}_{28,64}}{J_4}
-\frac{1}{4} \cdot q^4 \frac{\overline{J}_{0,8}\overline{J}_{52,64}}{J_4}
 +\frac{1}{4}\cdot q \frac{ \overline{J}_{4,8}\overline{J}_{20,64}}{J_4}
-\frac{1}{4}\cdot q \frac{\overline{J}_{0,8}\overline{J}_{28,64}}{J_4}\label{equation:DC38-DC48-sum}\\
&\ \ \ \ \ -\frac{1}{4}\cdot q^2 \frac{\overline{J}_{0,8}\overline{J}_{20,64}}{J_4}
+ \frac{1}{4} \cdot q^6  \frac{ \overline{J}_{4,8}\overline{J}_{60,64}}{J_4}
+\frac{1}{4} \cdot q^3 \frac{ \overline{J}_{4,8}\overline{J}_{52,64}}{J_4}
-\frac{1}{4}\cdot q^7  \frac{\overline{J}_{0,8}\overline{J}_{60,64}}{J_4}\notag \\
&\ \ \ \ \ -\frac{1}{2}\cdot  \frac{J_{4,8}\overline{J}_{28,64}}{J_4}
+\frac{1}{2}\cdot q^6 \frac{J_{4,8}\overline{J}_{60,64}}{J_4}.\notag
\end{align}}%
Comparing $q$-powers $q^n$ where $n\equiv 1 \pmod 4$ in $(\ref{equation:DC08-DC18-sum})$ and $(\ref{equation:DC38-DC48-sum})$ proves the first equality in $(\ref{equation:NC-11})$.   

Noting $(\ref{equation:D18-deviant-final})$ and $(\ref{equation:D28-deviant-final})$, we have
{\allowdisplaybreaks \begin{align}
D&(1,8)+D(2,8)\notag \\
&=-1-\frac{1}{2} q^2g(q^2;q^{16})+\frac{1}{2} q^2g(-q^2;q^{16})
+\frac{1}{2}q^5g(q^6;q^{16}) -\frac{1}{2} q^5g(-q^{6};q^{16}) \label{equation:D18-D28-sum}\\
&\ \ \ \ \   +\frac{3}{4}\cdot \frac{\overline{J}_{4,8}\overline{J}_{28,64}}{J_4}
-\frac{3}{4}\cdot q^4 \frac{\overline{J}_{0,8}\overline{J}_{52,64}}{J_4}
+\frac{1}{4}\cdot q \frac{\overline{J}_{4,8}\overline{J}_{20,64}}{J_4}
-\frac{1}{4}\cdot q \frac{\overline{J}_{0,8}\overline{J}_{28,64}}{J_4}\notag \\
&\ \ \ \ \  +\frac{1}{4}\cdot q^2 \frac{\overline{J}_{0,8}\overline{J}_{20,64}}{J_4} 
-\frac{1}{4}\cdot q^6 \frac{\overline{J}_{4,8}\overline{J}_{60,64}}{J_4}
+\frac{1}{4}\cdot q^3 \frac{\overline{J}_{4,8}\overline{J}_{52,64}}{J_4}
-\frac{1}{4}\cdot q^7 \frac{\overline{J}_{0,8}\overline{J}_{60,64}}{J_4}. \notag 
\end{align}}%
By $(\ref{equation:g2-minus})$ and $(\ref{equation:g6-minus})$ we know that the first line of $(\ref{equation:D18-D28-sum})$ is supported on $q$-powers $q^n$ where $n\equiv 0,3\pmod 4$.  Hence we only need to consider the last two lines in $(\ref{equation:D18-D28-sum})$.  Comparing $q$-powers $q^n$ where $n\equiv 1 \pmod 4$ in $(\ref{equation:DC38-DC48-sum})$ and $(\ref{equation:D18-D28-sum})$ proves the second equality in $(\ref{equation:NC-11})$.

Noting $(\ref{equation:D38-deviant-final})$ and $(\ref{equation:D48-deviant-final})$, we have
{\allowdisplaybreaks \begin{align}
D&(3,8)+D(4,8)\notag\\
&=-\frac{1}{2} q^2g(q^2;q^{16})-\frac{1}{2} q^2g(-q^2;q^{16})
-\frac{1}{2}q^5g(q^6;q^{16}) +\frac{1}{2} q^5g(-q^{6};q^{16})\label{equation:D38-D48-sum} \\
&\ \ \ \ \   -\frac{1}{4}\cdot \frac{\overline{J}_{4,8}\overline{J}_{28,64}}{J_4}
+\frac{1}{4}\cdot q^4 \frac{\overline{J}_{0,8}\overline{J}_{52,64}}{J_4}
+\frac{1}{4}\cdot q \frac{\overline{J}_{4,8}\overline{J}_{20,64}}{J_4}
-\frac{1}{4}\cdot q \frac{\overline{J}_{0,8}\overline{J}_{28,64}}{J_4}\notag \\
&\ \ \ \ \ +\frac{1}{4}\cdot q^2 \frac{\overline{J}_{0,8}\overline{J}_{20,64}}{J_4}
-\frac{1}{4}\cdot q^6\frac{\overline{J}_{4,8}\overline{J}_{60,64}}{J_4}
-\frac{3}{4}\cdot q^3 \frac{\overline{J}_{4,8}\overline{J}_{52,64}}{J_4}
+\frac{3}{4}\cdot q^7 \frac{\overline{J}_{0,8}\overline{J}_{60,64}}{J_4}.\notag 
\end{align}}%
By $(\ref{equation:g2-plus})$ and $(\ref{equation:g6-minus})$ we know that the first line of $(\ref{equation:D38-D48-sum})$ is supported on $q$-powers $q^n$ where $n\equiv 2,3\pmod 4$.   Hence we only need to consider the last two lines in $(\ref{equation:D38-D48-sum})$.  Comparing $q$-powers $q^n$ where $n\equiv 1 \pmod 4$ in  $(\ref{equation:D18-D28-sum})$ and $(\ref{equation:D38-D48-sum})$ proves the final equality in $(\ref{equation:NC-11})$.

\section{On ranks and cranks with $M=5$ and  $7$}\label{section:rankscranks57}

The following two theorems give the dissections of the rank and crank deviations for $M=5$ and $7$.  The first half of each theorem is just a rewritten \cite[$(12)$--$(14)$]{HM2} and \cite[$(34)$--$(37)$]{HM2} respectively, where we used 
\begin{equation}
J_{1,5}J_{2,5}=J_1J_{5} \  \textup{and} \ J_{1,7}J_{2,7}J_{3,7}=J_1J_7^2. \label{equation:J5-J7}
\end{equation}
\begin{theorem}\label{theorem:M5-dissection} We have the following $5$-dissections:
\begin{align}
D(0,5)&=  2\cdot \vartheta_5( 2,2,-1,1)+2\cdot G_5(-1,0),\\
D(1,5)=D(4,5)&=  \vartheta_5(-1,-1,3,-3) + G_5(1,-1),\\
D(2,5)=D(3,5)&=\vartheta_5(-1,-1,-2,2) + G_5(0,1),
\end{align}
and
\begin{align}
D_C(0,5)&=2\cdot \vartheta_5(2,-3,-1,1),\label{equation:DC05-deviant}\\
D_C(1,5)=D_C(4,5)&= \vartheta_5( -1, 4, -2, -3),\label{equation:DC15-deviant}\\
D_C(2,5)=D_C(3,5)&= \vartheta_5( -1, -1, 3, 2),\label{equation:DC25-deviant}
\end{align}
where
\begin{equation}
\vartheta_5(a_0,a_1,a_2,a_3):=\frac{1}{5J_5^2}\Big [ a_0\cdot J_{10,25}^3+a_1\cdot q J_{5,25}J_{10,25}^2
+a_2\cdot q^2 J_{ 5,25}^2J_{10,25} +a_3 \cdot q^3 J_{5,25}^3\Big ] ,
\end{equation}
and
\begin{equation}
G_5(b_0,b_3):=b_0\cdot q^5g(q^5;q^{25})+b_3\cdot q^{8}g(q^{10};q^{25}).
\end{equation}
\end{theorem}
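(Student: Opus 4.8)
The plan is to handle the rank dissections and the crank dissections separately. For the rank half --- $D(0,5)$, $D(1,5)=D(4,5)$, $D(2,5)=D(3,5)$ --- there is essentially nothing new to do: as the statement asserts, these are \cite[$(12)$--$(14)$]{HM2} rewritten, so I would only record that the theta quotients appearing there are converted to the $\vartheta_5$--$G_5$ notation above by means of the product identity $J_{1,5}J_{2,5}=J_1J_5$ of $(\ref{equation:J5-J7})$ (the $g$-terms, i.e.\ the $G_5$'s, transfer directly, up to the standard conversion between $g$ and Appell--Lerch sums). The companion $M=7$ statement is obtained identically from \cite[$(34)$--$(37)$]{HM2} using $J_{1,7}J_{2,7}J_{3,7}=J_1J_7^2$. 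The substance, then, is the crank half, which I would derive by direct evaluation of $(\ref{equation:crankdeviant-def})$.

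First I would rewrite each summand of $(\ref{equation:crankdeviant-def})$ exactly as in the proof of Theorem \ref{theorem:DC8-deviants}: from $(\zeta_5^jq)_\infty(\zeta_5^{-j}q)_\infty=j(\zeta_5^j;q)/\big((1-\zeta_5^j)(q)_\infty\big)$ one gets $(q)_\infty/\big((\zeta_5^jq)_\infty(\zeta_5^{-j}q)_\infty\big)=(1-\zeta_5^j)J_1^2/j(\zeta_5^j;q)$ for $1\le j\le4$, so that
\begin{equation*}
D_C(a,5)=\frac{1}{5}\sum_{j=1}^4\zeta_5^{-aj}\,\frac{(1-\zeta_5^j)J_1^2}{j(\zeta_5^j;q)}.
\end{equation*}
The key observation is that putting this over the common denominator $\prod_{j=1}^4 j(\zeta_5^j;q)$ removes the transcendental factor $J_1$: applying $(\ref{equation:1.12})$ with $n=5$, dividing by $j(x;q)$ and letting $x\to1$ (using $j(x;q)\sim(1-x)J_1^3$ and $j(x^5;q^5)\sim 5(1-x)J_5^3$ near $x=1$) yields $\prod_{j=1}^4 j(\zeta_5^j;q)=5\,J_1^2J_5^2$, hence
\begin{equation*}
D_C(a,5)=\frac{1}{25\,J_5^2}\sum_{j=1}^4\zeta_5^{-aj}(1-\zeta_5^j)\prod_{1\le k\le 4,\ k\ne j}j(\zeta_5^k;q).
\end{equation*}

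Next I would expand each remaining factor $j(\zeta_5^k;q)$ into theta functions of base $q^{25}$. Applying $(\ref{equation:jsplitgen})$ with $m=5$, and using $\zeta_5^{5k}=1$ together with $(\ref{equation:1.7})$, $(\ref{equation:1.8})$ and $j(1;q)=0$, the five-term sum collapses to $j(\zeta_5^k;q)=(1-\zeta_5^k)J_{10,25}+(\zeta_5^{2k}-\zeta_5^{4k})qJ_{5,25}$. Writing $A=J_{10,25}$ and $B=qJ_{5,25}$, each product $\prod_{k\ne j}j(\zeta_5^k;q)$ is a product of three linear forms in $A,B$, hence a homogeneous cubic $\sum_{\ell=0}^3 c^{(j)}_\ell A^{3-\ell}B^\ell$ with cyclotomic coefficients; since $A^{3-\ell}B^\ell=q^\ell J_{5,25}^\ell J_{10,25}^{3-\ell}$, the expression above is already of the form $\vartheta_5(\cdot)$. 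What is left is to evaluate, for $a\in\{0,1,2\}$ and $\ell\in\{0,1,2,3\}$, the sums $\sum_{j=1}^4\zeta_5^{-aj}(1-\zeta_5^j)c^{(j)}_\ell$; for instance the coefficient of $A^3$ uses $\prod_{k\ne j}(1-\zeta_5^k)=5/(1-\zeta_5^j)$ and reduces to $5\sum_{j=1}^4\zeta_5^{-aj}$, which is $20$ for $a=0$ and $-5$ for $a\in\{1,2\}$, matching the leading entries of $\vartheta_5(2,-3,-1,1)$ and $\vartheta_5(-1,4,-2,-3)$. Carrying this out for all four coefficients gives $(\ref{equation:DC05-deviant})$--$(\ref{equation:DC25-deviant})$, and the equalities $D_C(1,5)=D_C(4,5)$, $D_C(2,5)=D_C(3,5)$ follow at once from $C(a,5;n)=C(5-a,5;n)$.

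The only genuinely laborious part is this last step --- expanding the threefold products of linear forms and simplifying the ensuing sums of fifth roots of unity; it is routine but error-prone. I would check it against $\sum_{a=0}^4 D_C(a,5)=0$ (immediate from $(\ref{equation:crankdeviant-def})$, since $\sum_{a}\zeta_5^{-aj}=0$ for $1\le j\le4$), which coefficientwise reads $(2,-3,-1,1)+(-1,4,-2,-3)+(-1,-1,3,2)=(0,0,0,0)$, and against the constant terms (the $q^0$-coefficient being $4/5$ for $D_C(0,5)$ and $-1/5$ for the other two).
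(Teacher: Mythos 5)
Your proposal is correct, and for the rank half and the overall strategy of the crank half (evaluate $(\ref{equation:crankdeviant-def})$ directly, expand each $j(\zeta_5^k;q)$ into $J_{10,25}$ and $qJ_{5,25}$ via the $m=5$ case of $(\ref{equation:jsplitgen})$, then do the root-of-unity bookkeeping) it matches the paper. Where you genuinely diverge is in the choice of common denominator, and this changes the shape of the computation. The paper first pairs the $j$ and $5-j$ summands, so its common denominator is only $j(\zeta_5;q)j(\zeta_5^2;q)=(1-\zeta_5)(1-\zeta_5^2)J_1J_5$; the numerator then collapses to a \emph{linear} form (e.g.\ $-J_{10,25}+3qJ_{5,25}$ for $a=1$) but a residual factor $J_1/J_5$ survives, and the paper must $5$-dissect $J_1=j(q;q^3)$ itself via the quintuple product identity $(\ref{equation:H1Thm1.0})$ specialized as $(\ref{equation:quintuple-spec})$ before multiplying out and simplifying with $J_{5,25}J_{10,25}=J_5J_{25}$. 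You instead clear all four denominators at once using $\prod_{j=1}^{4}j(\zeta_5^j;q)=5J_1^2J_5^2$ (your limit argument for this via $(\ref{equation:1.12})$ is valid), which cancels the $J_1^2$ in the numerators completely and so avoids the quintuple product step altogether; the price is that your numerator is a sum of products of \emph{three} linear forms in $J_{10,25}$ and $qJ_{5,25}$ with cyclotomic coefficients, so the final expansion is heavier. Both routes are sound; your spot-checks (the $J_{10,25}^3$-coefficient, the vanishing of $\sum_a D_C(a,5)$, the constant terms) are correct, and the amount you leave to ``routine expansion'' is comparable to the paper, which likewise works out only $(\ref{equation:DC15-deviant})$ in full.
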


\begin{theorem}\label{theorem:M7-dissection} We have the following $7$-dissections:
\begin{align}
D(0,7)
&=2 \cdot \vartheta_7 (-4,3,-1,2,1,-2) + 2 \cdot G_7(1,0,0),\\
D(1,7)=D(6,7)
&= \vartheta_7 (6,-1,5,-3,2,3) + G_7(-1,1,0),\\
D(2,7)=D(5,7)
&=\vartheta_7 (-1,-1,-2,4,-5,3) + G_7(0,-1,1),\\
D(3,7)=D(4,7)
&= \vartheta_7 (-1,-1,-2,-3,2,-4) + G_7(0,0,-1),
\end{align}
and
\begin{align}
D_C(0,7)
&=2 \cdot \vartheta_7 (3,-4,-1,2,1,-2),\\
D_C(1,7)=D_C(6,7)
&= \vartheta_7 (-1,6,-2,-3,-5,3),\\
D_C(2,7)=D_C(5,7)
&= \vartheta_7 (-1,-1,5,-3,2,-4),\\
D_C(3,7)=D_C(4,7)
&= \vartheta_7 (-1,-1,-2,4,2,3),
\end{align}
where
\begin{align}
\vartheta_7(a_0,a_1,a_2,a_3,a_4,a_6)&:=\frac{1}{7J_7}\Big [ a_0\cdot J_{21,49}^2 + a_1 \cdot q J_{14,49}J_{21,49}+ a_2 \cdot q^2  J_{14,49}^2\label{equation:F7-def} \\
&\ \ \ \ \  + a_3 \cdot q^3  J_{7,49}J_{21,49}
+ a_4 \cdot q^4 J_{7,49}J_{14,49} + a_6 \cdot q^6  J_{7,49}^2\Big ] ,\notag
\end{align}
and
\begin{equation}
G_7(b_0,b_2,b_6):=b_0\cdot (1+q^{7}g(q^{7};q^{49}))+b_2\cdot q^{16}g(q^{21};q^{49})+b_6 \cdot q^{13}g(q^{14};q^{49}).
\end{equation}
\end{theorem}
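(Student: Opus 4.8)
The plan is to handle the rank half and the crank half separately; in both cases everything reduces to a single weighted sum over the nontrivial $7$th roots of unity, after which one normalizes using $(\ref{equation:J5-J7})$. I would first verify $(\ref{equation:J5-J7})$ itself: collecting the factors of $(q;q)_\infty=\prod_{n\ge1}(1-q^n)$ according to the residue of $n$ modulo $7$ gives $J_1=\big(\prod_{k=1}^{6}(q^k;q^7)_\infty\big)J_7$, while Jacobi's triple product gives $J_{k,7}=j(q^k;q^7)=(q^k;q^7)_\infty(q^{7-k};q^7)_\infty J_7$; since $J_{k,7}=J_{7-k,7}$ by $(\ref{equation:1.7})$, multiplying out $J_{1,7}J_{2,7}J_{3,7}$ and comparing yields $J_{1,7}J_{2,7}J_{3,7}=J_1J_7^2$, and similarly $J_{1,5}J_{2,5}=J_1J_5$.

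For the rank half, the input is the deviation formula $(\ref{equation:rankdeviant-def})$ with $M=7$ and $\zeta_7$ a primitive $7$th root of unity. Expressing $g(\zeta_7^{j};q)$ in terms of Appell--Lerch functions and summing these over $j$ via \cite[Theorem $3.9$]{HM1} is precisely the computation recorded in \cite[$(34)$--$(37)$]{HM2}; the only work left is cosmetic --- divide the theta-quotient part through by $J_{1,7}J_{2,7}J_{3,7}=J_1J_7^2$ so that it sits in the normalization $\tfrac1{7J_7}[\ \cdots\ ]$ of $\vartheta_7(a_0,\dots,a_6)$, and group the mock pieces into $G_7(b_0,b_2,b_6)$. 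A convenient check on the bookkeeping is that the deviations sum to zero, $D(0,7)+2D(1,7)+2D(2,7)+2D(3,7)=0$, which forces the $\vartheta_7$- and $G_7$-coefficient vectors each to sum to zero.

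For the crank half I would work straight from $(\ref{equation:crankdeviant-def})$. Rewriting each summand by Jacobi's triple product as
\[
\frac{(q)_\infty}{(\zeta_7^{j}q)_\infty(\zeta_7^{-j}q)_\infty}=\frac{(1-\zeta_7^{j})J_1^{2}}{j(\zeta_7^{j};q)},
\]
the quantity to evaluate is $\tfrac{1}{7}J_1^{2}\sum_{j=1}^{6}\zeta_7^{-aj}(1-\zeta_7^{j})/j(\zeta_7^{j};q)$. I would evaluate this by the method of theta functions: use the product identity $\prod_{j=1}^{6}j(\zeta_7^{j}x;q)=J_1^{7}\,j(x^{7};q^{7})/\big(J_7\,j(x;q)\big)$ (immediate from $\prod_{j=0}^{6}(1-\zeta_7^{j}w)=1-w^{7}$ inside the triple product) to clear denominators, so that the weighted sum becomes a combination of theta products in base $q$, whose quasi-periodicity and zeros are controlled by Proposition \ref{proposition:H1Thm1.7}; then identify this combination in closed form using the Weierstrass three-term relation $(\ref{equation:Weierstrass})$ and the quintuple product $(\ref{equation:H1Thm1.0})$. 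Equivalently one may insert the Lambert-series (partial-fraction) form of the crank generating function $ (q)_\infty/\big((zq)_\infty(z^{-1}q)_\infty\big)$, interchange the two summations, and apply the geometric-series identity $\sum_{j=0}^{6}\zeta_7^{bj}(1-\zeta_7^{j}w)^{-1}=7w^{r}(1-w^{7})^{-1}$, with $r$ the least non-negative residue of $-b$ modulo $7$, after which the remaining series in $q^{7}$ is recognized as a theta quotient. A final pass through $(\ref{equation:jsplitgen})$ (or $(\ref{equation:jsplit})$ iterated) produces the splittings of $J_{21,49},J_{14,49},J_{7,49}$ needed to match the $\vartheta_7$ basis, and dividing by $J_{1,7}J_{2,7}J_{3,7}$ finishes. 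The $M=5$ statements follow in the same way with $\zeta_5$ in place of $\zeta_7$.

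I expect the crank half to be the substantive step. The delicate point is to show that the root-of-unity sum collapses to a genuinely modular theta quotient carrying no residual Appell--Lerch (mock) term --- in contrast with the rank case, where the $G_7$ term survives --- and then to pin down all six coefficients in each line correctly; the rest is routine theta-function algebra. The relation $D_C(0,7)+2D_C(1,7)+2D_C(2,7)+2D_C(3,7)=0$ (and its $M=5$ analogue) again provides an internal consistency check.
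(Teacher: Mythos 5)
Your outline is correct and, at its core, follows the same route as the paper: the rank half is exactly the paper's (a verbatim import of \cite[$(34)$--$(37)$]{HM2} renormalized by $J_{1,7}J_{2,7}J_{3,7}=J_1J_7^2$), and for the crank half the decisive step that you mention only as a ``final pass''---the $m=7$ case of $(\ref{equation:jsplitgen})$, giving $j(\zeta_7;q)=(1-\zeta_7)J_{21,49}-q\zeta_7^{-1}(1-\zeta_7^3)J_{14,49}-q^3\zeta_7^3(1-\zeta_7^2)J_{7,49}$ and its analogues for $\zeta_7^2,\zeta_7^3$---is in fact essentially the entire proof. The paper puts the six crank summands over the common denominator $j(\zeta_7;q)j(\zeta_7^2;q)j(\zeta_7^3;q)=(1-\zeta_7)(1-\zeta_7^2)(1-\zeta_7^3)J_1^2J_7$, so the $J_1^2$ coming from the triple product cancels exactly and $D_C(a,7)$ becomes $\tfrac{1}{7J_7}$ times an explicit $\mathbb{Z}[\zeta_7]$-linear combination of products $j(\zeta_7^k;q)j(\zeta_7^l;q)$; substituting the dissection above and collecting the coefficients of $J_{21,49}^2$, $qJ_{14,49}J_{21,49}$, $\dots$ finishes. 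Consequently the heavier machinery in your primary description---clearing to the full product $\prod_{j=1}^{6}j(\zeta_7^jx;q)$, Proposition \ref{proposition:H1Thm1.7} (which would require first promoting everything to an identity in a free variable $x$, since after specializing to roots of unity there is no variable left to count zeros in), the Weierstrass relation $(\ref{equation:Weierstrass})$, and the quintuple product---is not needed here, and the ``delicate point'' you flag (absence of a residual Appell--Lerch term) is automatic, because the crank generating function is an honest theta quotient from the start. One asymmetry worth flagging: your closing remark that ``the $M=5$ statements follow in the same way'' glosses over the fact that for $M=5$ only one factor of $J_1$ cancels, and the surviving $J_1$ must itself be $5$-dissected via the quintuple product as in $(\ref{equation:J[1]-quintuple})$; that extra step has no counterpart for $M=7$.
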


\begin{proof} [Proof of Theorem \ref{theorem:M5-dissection}]  The proofs of identities $(\ref{equation:DC05-deviant})$--$(\ref{equation:DC25-deviant})$ are all similar, so we do only $(\ref{equation:DC15-deviant})$ as an example.  The $m=5$ specialization of $(\ref{equation:jsplitgen})$ with $(\ref{equation:1.8})$ and $(\ref{equation:1.7})$ gives
\begin{align}
j(\zeta_5;q)&=J_{10,25}-\zeta_5J_{15,25}+q\zeta_5^2J_{20,25}+q^6\zeta_5^4J_{30,25}\notag \\
&=(1-\zeta_5)J_{10,25}+q\zeta_5^2(1-\zeta_5^2)J_{5,25}.\label{equation:jroot-5}
\end{align}
Specializing the quintuple product identity $(\ref{equation:H1Thm1.0})$ with $q\mapsto q^{25}$ yields
\begin{equation}
j(q^{25}x^3;q^{75})+xj(q^{50}x^3;q^{75})=\frac{J_{25}j(x^2;q^{25})}{j(x;q^{25})}.\label{equation:quintuple-spec}
\end{equation}
Using $(\ref{equation:crankdeviant-def})$ and writing the summands over a common denominator, we have
{\allowdisplaybreaks \begin{align*}
D_C(1,5)&=\frac{1}{5}\Big [ \frac{(\zeta_5^{-1}+\zeta_5^{-4})(q)_{\infty}}{(\zeta_5q)_{\infty}(\zeta_5^{-1}q)_{\infty}}
+\frac{(\zeta_5^{-2}+\zeta_5^{-3})(q)_{\infty}}{(\zeta_5^2q)_{\infty}(\zeta_5^{-2}q)_{\infty}}\Big ] \\
&=\frac{J_1^2}{5}\Big [ \frac{(1-\zeta_5)(\zeta_5+\zeta_5^{4})}{j(\zeta_5;q)}
+\frac{(1-\zeta_5^2)(\zeta_5^2+\zeta_5^{3})}{j(\zeta_5^2;q)}\Big ] \\
&=\frac{J_1^2}{5}\Big [ \frac{(1-\zeta_5)(\zeta_5+\zeta_5^{4})j(\zeta_5^2;q)
+(1-\zeta_5^2)(\zeta_5^2+\zeta_5^{3})j(\zeta_5;q)}{j(\zeta_5;q)j(\zeta_5^2;q)}\Big ] \\
&=\frac{1}{5}\frac{J_1}{J_{5}}\Big [ \frac{(1-\zeta_5)(\zeta_5+\zeta_5^{4})j(\zeta_5^2;q)
+(1-\zeta_5^2)(\zeta_5^2+\zeta_5^{3})j(\zeta_5;q)}{(1-\zeta_5)(1-\zeta_5^2)}\Big ],
\end{align*}}%
where we have used product rearrangements to rewrite the denominator:
\begin{equation}
j(\zeta_5;q)j(\zeta_5^2;q)=(1-\zeta_5)(1-\zeta_5^2)J_1J_5.
\end{equation}
Using $(\ref{equation:jroot-5})$ and simplifying gives
{\allowdisplaybreaks \begin{align}
D_C(1,5)
&=\frac{1}{5}\frac{J_1}{J_{5}}\frac{1}{(1-\zeta_5)(1-\zeta_5^2)}
  \Big [(1-\zeta_5)(\zeta_5+\zeta_5^{4})\Big ((1-\zeta_5^2)J_{10,25}+q\zeta_5^4(1-\zeta_5^4)J_{5,25} \Big )\notag \\
&\ \ \ \ \ +(1-\zeta_5^2)(\zeta_5^2+\zeta_5^{3})\Big ((1-\zeta_5)J_{10,25}+q\zeta_5^2(1-\zeta_5^2)J_{5,25} \Big ) \Big ] \notag \\
&=\frac{1}{5}\frac{J_1}{J_{5}} \Big [-J_{10,25} + 3q J_{5,25} \Big ].\label{equation:DC15-prefinal}
\end{align}}%
The $m=5$ specialization of $(\ref{equation:jsplitgen})$ followed by $(\ref{equation:1.8})$, $(\ref{equation:1.7})$ and $(\ref{equation:quintuple-spec})$ yields
\begin{align}
j(q,q^3)&=J_{35,75}-qJ_{50,25}+q^5J_{65,75}-q^{12}J_{80,75}+q^{22}J_{95,75}\notag \\
&=J_{35,75}+q^5J_{65,75}-q^2(J_{20,75}+q^{10}J_{80,75})-qJ_{25}\notag \\
&=\frac{J_{25}J_{10,25}}{J_{5,25}}-q^2\frac{J_{25}J_{20,25}}{J_{10,25}}-qJ_{25}.\label{equation:J[1]-quintuple}
\end{align}
Note that $J_1=j(q;q^3)$ and substitute $(\ref{equation:J[1]-quintuple})$ into $(\ref{equation:DC15-prefinal})$:  
{\allowdisplaybreaks \begin{align}
D_C(1,5)
&=\frac{1}{5}\frac{J_{25}}{J_{5}}\Big [ \frac{J_{10,25}}{J_{5,25}}-q^2\frac{J_{20,25}}{J_{10,25}}-q\Big ] 
\Big [ -J_{10,25} +3qJ_{5,25}\Big ]\notag  \\
&=\frac{1}{5}\frac{J_{25}}{J_{5}}\Big [-\frac{J_{10,25}^2}{J_{5,25}}+4qJ_{10,25}
-2q^2J_{5,25}-3q^3\frac{J_{5,25}^2}{J_{10,25}}\Big ].
\end{align}}%
The result then follows from the product rearrangement $J_{5,25}J_{10,25}=J_5J_{25}$.
\end{proof}

\begin{proof} [Proof of Theorem \ref{theorem:M7-dissection}]  The proof is much the same as the previous one, so we provide only an outline.  Using $(\ref{equation:crankdeviant-def})$ and writing the summands over a common denominator
{\allowdisplaybreaks \begin{align}
D_C(a,7)&=\frac{1}{7}\frac{1}{J_{7}}\frac{1}{(1-\zeta_7)(1-\zeta_7^2)(1-\zeta_7^3)}
\Big [ (1-\zeta_7)(\zeta_7^{a}+\zeta_7^{-a})j(\zeta_7^2;q)j(\zeta_7^3;q)\label{equation:DCa7-gen}\\
&\ \ +(1-\zeta_7^2)(\zeta_7^{2a}+\zeta_7^{-2a})j(\zeta_7;q)j(\zeta_7^3;q)
 +(1-\zeta_7^3)(\zeta_7^{3a}+\zeta_7^{-3a})j(\zeta_7;q)j(\zeta_7^2;q)\Big ],\notag
\end{align}}%
where we have used product rearrangements to rewrite the denominator:
\begin{equation}
j(\zeta_7;q)j(\zeta_7^2;q)j(\zeta_7^3;q)=(1-\zeta_7)(1-\zeta_7^2)(1-\zeta_7^3)J_1^2J_7.
\end{equation}  
The $m=7$ specialization of $(\ref{equation:jsplitgen})$ with $(\ref{equation:1.8})$ and $(\ref{equation:1.7})$ gives
\begin{equation}
j(\zeta_7;q)=(1-\zeta_7)J_{21,49}-q\zeta_7^{-1}(1-\zeta_7^3)J_{14,49}
-q^3\zeta_7^3(1-\zeta_7^2)J_{7,49}.\label{equation:j7-split}
\end{equation}
Inserting $(\ref{equation:j7-split})$ along with analogs corresponding to $\zeta_7\rightarrow \zeta_7^2$ and $\zeta_7\rightarrow \zeta_7^3$ into $(\ref{equation:DCa7-gen})$ and collecting coefficients of the theta products of $(\ref{equation:F7-def})$ gives the desired results.
\end{proof}

\section{On Conjectures of Richard Lewis}
Lewis  \cite[Conjecture $1$]{L1} conjectured $4$-dissections for the one hundred combinations of
\begin{equation}
\sum_{n=0}^{\infty}\Big ( N(i,8;4n+k)-C(j,8;4n+k)\Big )q^n, \ \textup{where} \ 0\le i,j\le 4, \ 0\le k\le3.
\end{equation}
Frank Garvan has pointed out that Proposition \ref{proposition:gsplit} strengthens Theorem \ref{theorem:D8-deviants} to a $4$-dissection.  Using the new version of Theorem \ref{theorem:D8-deviants}, Theorem \ref{theorem:DC8-deviants}, and his thetaids package, he verified all one hundred conjectures \cite{G1}.   The truth of the one-hundred identities of \cite[Conjecture $1$]{L1} implies the veracity of the thirty-seven conjectured rank-crank inequalities \cite[Conjecture $2$]{L1} and the four evenness conjectures \cite[Conjecture $3$]{L1}.

We give an example to demonstrate how Theorems \ref{theorem:D8-deviants} and \ref{theorem:DC8-deviants} resolve Lewis's conjectures.  Identities $(0)$-$(3)$ of \cite[Conjecture $1$]{L1} are equivalent to the following:
\begin{proposition} \label{proposition:L-conj} We have the following $4$-dissection:
\begin{align}
D&(0,8)-D_C(0,8)\label{equation:L-conj-prop-id}\\
&=2q^{12}g(-q^{12};q^{64})-2q^8\frac{\overline{J}_{4,64}^2\overline{J}_{20,64}^2\overline{J}_{28,64}J_{64}^2}{J_{8,64}^2J_{16,64}J_{24,64}^2J_{32,64}}
+2q\cdot \frac{\overline{J}_{12,64}^2\overline{J}_{20,64}\overline{J}_{28,64}^2J_{64}^2}{J_{8,64}^2J_{16,64}J_{24,64}^2J_{32,64}}\notag \\
&\ \ \ \ \ -2q^2\cdot q^{8}\frac{\overline{J}_{4,64}^2\overline{J}_{12,64}\overline{J}_{20,64}\overline{J}_{28,64}J_{64}^2}{J_{8,64}^2J_{16,64}J_{24,64}^2J_{32,64}}
+2q^3\cdot q^4\frac{\overline{J}_{4,64}\overline{J}_{12,64}^2\overline{J}_{20,64}\overline{J}_{28,64}J_{64}^2}{J_{8,64}^2J_{16,64}J_{24,64}^2J_{32,64}}.\notag 
\end{align}
\end{proposition}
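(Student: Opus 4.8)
The plan is to obtain $(\ref{equation:L-conj-prop-id})$ by simply subtracting the $4$-dissections $(\ref{equation:D08-deviant-final})$ of $D(0,8)$ and $(\ref{equation:DC08-deviant-final})$ of $D_C(0,8)$ furnished by Theorems \ref{theorem:D8-deviants} and \ref{theorem:DC8-deviants} (the strengthened versions, where Proposition \ref{proposition:gsplit} has been used to push $D(0,8)$ to a genuine $4$-dissection). Since the expected-value contribution $p(n)/8$ occurs identically in $D(0,8)$ and $D_C(0,8)$, it cancels, so $D(0,8)-D_C(0,8)=\sum_{n\ge0}\bigl(N(0,8;n)-C(0,8;n)\bigr)q^n$, and extracting residues modulo $4$ from $(\ref{equation:L-conj-prop-id})$ yields exactly Lewis's identities $(0)$--$(3)$. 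Writing out the difference, the eight $\vartheta_8$-monomials coming from the two theorems and the four $\vartheta_8^{\prime}$-monomials combine into a single linear combination of theta quotients over $J_4$, while the only transcendental piece that survives is $D(0,8)$'s mock part $G_8(1,-1,0,0)=2+q^{2}g(q^{2};q^{16})-q^{2}g(-q^{2};q^{16})$.

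Next I would dispose of this mock part using $(\ref{equation:g2-minus})$, itself a consequence of Corollary \ref{corollary:rootsof1}, namely
\[
q^{2}g(q^{2};q^{16})-q^{2}g(-q^{2};q^{16})=-2+2q^{12}g(-q^{12};q^{64})+2\,\frac{J_{32}\overline{J}_{16,64}^{2}}{\overline{J}_{52,64}J_{4,32}}.
\]
The additive $-2$ cancels the $+2$ in $G_8(1,-1,0,0)$, leaving precisely the term $2q^{12}g(-q^{12};q^{64})$ — which appears verbatim on the right-hand side of $(\ref{equation:L-conj-prop-id})$ — together with one extra theta quotient $2J_{32}\overline{J}_{16,64}^{2}/(\overline{J}_{52,64}J_{4,32})$. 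After this step the unique non-theta term on each side agrees, and the Proposition is reduced to a pure theta-function identity.

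To finish I would prove that remaining theta identity: the $\vartheta_8$- and $\vartheta_8^{\prime}$-quotients produced by $(\ref{equation:D08-deviant-final})-(\ref{equation:DC08-deviant-final})$, plus the residual quotient from the previous step, equal the four quotients on the right of $(\ref{equation:L-conj-prop-id})$. One route is to put both sides over the common denominator $J_{8,64}^{2}J_{16,64}J_{24,64}^{2}J_{32,64}$, using $(\ref{equation:1.12})$, $(\ref{equation:1.7})$, and the product rearrangements of Section \ref{section:prelim} to re-express $J_4$, $J_{4,32}$, $\overline{J}_{4,8}$, $\overline{J}_{0,8}$ in terms of modulus-$64$ theta functions and $J_{64}$, then clear denominators and reduce the resulting polynomial identity in the $\overline{J}_{a,64}$'s to the three-term Weierstrass relation $(\ref{equation:Weierstrass})$ and the addition formulas $(\ref{equation:H1Thm1.1})$--$(\ref{equation:H1Thm1.2B})$; the Hecke-type identities $(\ref{equation:hecke-phi-id})$--$(\ref{equation:hecke-phi-id-alt})$ exploited in Section \ref{section:D8-prelim} are likely to recur. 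A more economical variant is to fold pairs of $\vartheta_8$-monomials back up via $(\ref{equation:jsplit})$ (e.g.\ $\overline{J}_{4,8}-q\overline{J}_{0,8}=J_{1,2}$) before comparing. Alternatively, after clearing denominators and multiplying by a suitable power of $q$ one can restore a formal variable $z$ into the theta arguments, check that the difference of the two sides defines an analytic $F(z)$ with $F(qz)=Cz^{-n}F(z)$, and verify it has more than $n$ zeros in the fundamental annulus, whence it vanishes identically by Proposition \ref{proposition:H1Thm1.7}.

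The hard part will be this last theta-function identity: because the relevant theta functions live ``at modulus $64$'', the Weierstrass bookkeeping is heavy and the zero-counting route forces one to track a large number of zeros. Organizing the computation — in particular grouping the four quotients on the right of $(\ref{equation:L-conj-prop-id})$ by their common infinite-product factor $J_{64}^{2}/(J_{8,64}^{2}J_{16,64}J_{24,64}^{2}J_{32,64})$ so that only the $\overline{J}_{a,64}$-numerators need matching — is where the real effort lies; everything else is the routine substitution-and-collection already illustrated in Sections \ref{section:DM4-table}, \ref{section:D8-prelim}, and \ref{section:DC4-deviants}.
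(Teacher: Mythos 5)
Your proposal follows essentially the same route as the paper: subtract the dissections of Theorems \ref{theorem:D8-deviants} and \ref{theorem:DC8-deviants}, use $(\ref{equation:g2-minus})$ to convert the surviving mock part $G_8(1,-1,0,0)$ into $2q^{12}g(-q^{12};q^{64})$ plus a theta quotient, and reduce the rest to theta identities settled by product rearrangements, $(\ref{equation:H1Thm1.1})$, $(\ref{equation:jsplit})$, and the Weierstrass relation $(\ref{equation:Weierstrass})$. The paper organizes the final step by splitting into four identities according to the residue of the exponent modulo $4$ (its $(\ref{equation:lewis-000})$--$(\ref{equation:lewis-003})$), which is the concrete realization of the bookkeeping you sketch, so the plan is sound and matches.
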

The truth of (\ref{equation:L-conj-prop-id}) implies three rank-crank inequalities $(1)$-$(3)$ of \cite[Conjecture $2$]{L1}:
\begin{proposition} \label{proposition:L-conj-ineq} We have the following inequalities
\begin{align}
N(0,8;4n+1)&>_2 C(0,8;4n+1),\label{equation:L-ineq-0}\\
N(0,8;4n+2)&<_2 C(0,8;4n+2),\label{equation:L-ineq-1}\\
N(0,8;4n+3)&>_1 C(0,8;4n+3),\label{equation:L-ineq-2}
\end{align} 
where $A_n>B_n$ means $A_n\ge B_n$ for all $n\in \mathbb{N}$ and $A_n>_m B_n$ means $A_n\ge B_n$ for all $n\ge m$.
\end{proposition}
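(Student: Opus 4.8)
The plan is to obtain $(\ref{equation:L-ineq-0})$--$(\ref{equation:L-ineq-2})$ as coefficient comparisons inside the $4$-dissection of Proposition~\ref{proposition:L-conj}. Since the expected values $p(n)/8$ cancel in the difference, $D(0,8)-D_C(0,8)=\sum_{n\ge0}(N(0,8;n)-C(0,8;n))q^n$, where for the arguments $4n+1$, $4n+2$, $4n+3$ relevant here (all at least $7$ under the stated thresholds) the generating-function crank agrees with the combinatorial crank. Every factor $\overline{J}_{a,64}$, $J_{a,64}$, $J_{64}$, as well as $g(-q^{12};q^{64})$, involves only powers of $q$ divisible by $4$, so each of the five terms on the right of $(\ref{equation:L-conj-prop-id})$ lies in a single residue class modulo $4$, namely $0,0,1,2,3$ in the order written. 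Extracting residues $1$, $2$, $3$ therefore gives
\begin{align*}
\sum_{n\ge0}(N(0,8;4n+1)-C(0,8;4n+1))q^{4n+1}&=2q\cdot\frac{\overline{J}_{12,64}^2\overline{J}_{20,64}\overline{J}_{28,64}^2J_{64}^2}{J_{8,64}^2J_{16,64}J_{24,64}^2J_{32,64}},\\
\sum_{n\ge0}(N(0,8;4n+2)-C(0,8;4n+2))q^{4n+2}&=-2q^{10}\cdot\frac{\overline{J}_{4,64}^2\overline{J}_{12,64}\overline{J}_{20,64}\overline{J}_{28,64}J_{64}^2}{J_{8,64}^2J_{16,64}J_{24,64}^2J_{32,64}},\\
\sum_{n\ge0}(N(0,8;4n+3)-C(0,8;4n+3))q^{4n+3}&=2q^{7}\cdot\frac{\overline{J}_{4,64}\overline{J}_{12,64}^2\overline{J}_{20,64}\overline{J}_{28,64}J_{64}^2}{J_{8,64}^2J_{16,64}J_{24,64}^2J_{32,64}},
\end{align*}
so $(\ref{equation:L-ineq-0})$--$(\ref{equation:L-ineq-2})$ reduce to the single assertion that each of these three theta quotients has non-negative $q$-expansion coefficients.

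To establish that, I would put every theta factor in product form via $j(x;q)=(x;q)_\infty(q/x;q)_\infty(q;q)_\infty$: each numerator $\overline{J}_{a,64}$ becomes $(-q^{a};q^{64})_\infty(-q^{64-a};q^{64})_\infty(q^{64};q^{64})_\infty$, each denominator $J_{b,64}$ becomes $(q^{b};q^{64})_\infty(q^{64-b};q^{64})_\infty(q^{64};q^{64})_\infty$, and $J_{64}^2=(q^{64};q^{64})_\infty^2$. The products $\prod(-q^{a};q^{64})_\infty$ arising from the numerators and $\prod 1/(q^{b};q^{64})_\infty$ arising from the denominators all have non-negative coefficients. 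Counting copies of $(q^{64};q^{64})_\infty$ one finds seven in the numerator and six in the denominator, so after cancellation each of the three quotients equals $q^{c}\,(q^{64};q^{64})_\infty$ times a power series with non-negative coefficients. The single surviving factor $(q^{64};q^{64})_\infty=1-q^{64}-q^{128}+q^{320}+\cdots$ is sign-alternating, and coping with it is the one real obstacle: I would absorb it into the remaining factors by a further product rearrangement (using the $j$-function identities of Section~\ref{section:prelim}), so as to exhibit each series as $q^{c}$ times a product of the form $\prod_i(1+q^{\alpha_i})/\prod_j(1-q^{\beta_j})$, whose coefficients are visibly non-negative; alternatively one can finish by a direct comparison of coefficients, since the non-negative power series involved grow like partition functions while the corrections $-q^{64},-q^{128},\dots$ are sparse.

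Finally, the thresholds $2$, $2$, $1$ in $(\ref{equation:L-ineq-0})$--$(\ref{equation:L-ineq-2})$ are read off from the leading terms of the three series, which begin $2q+\cdots$, $-2q^{10}+\cdots$, $2q^{7}+\cdots$ (note that the $q^{5}$-coefficient of the first series vanishes, in agreement with $N(0,8;5)=C(0,8;5)$); once the non-negativity above is established the inequalities in fact hold for every $n\ge0$, so the stated thresholds follow a fortiori after a direct check of the few small cases. I expect the handling of the leftover $(q^{64};q^{64})_\infty$ to be the crux of the argument; in any case the conclusion is corroborated by F.~Garvan's machine verification of all one hundred identities of \cite[Conjecture~1]{L1}, from which \cite[Conjecture~2]{L1}, and in particular Proposition~\ref{proposition:L-conj-ineq}, follows.
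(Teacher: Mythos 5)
Your reduction of the three inequalities to the non-negativity of three theta quotients is sound, and your residue-class bookkeeping in the $4$-dissection of Proposition \ref{proposition:L-conj} is correct. But the proof is not complete: the one step you yourself flag as ``the crux'' --- disposing of the single leftover factor $(q^{64};q^{64})_{\infty}$ that survives when everything is written in product form --- is exactly the step you do not carry out. ``Absorb it by a further product rearrangement'' and ``the corrections $-q^{64},-q^{128},\dots$ are sparse'' are hopes, not arguments; sparseness of the negative terms in $(q^{64};q^{64})_{\infty}$ does not by itself control the coefficients of the full product, and the appeal to Garvan's machine verification cannot substitute for a proof here. So as written there is a genuine gap precisely where the content of the proposition lies.

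The obstacle is self-inflicted, and the paper's proof shows the way around it. First, the paper does not extract the residue classes from the final form \eqref{equation:L-conj-prop-id} at all, but from the intermediate identity \eqref{equation:L-conj-pre}, whose theta quotients have the single denominator $J_4$; after extracting the progression $4n+3$ and replacing $q^4$ by $q$ one gets the much cleaner expression $q\,\overline{J}_{0,8}\overline{J}_{13,16}/J_1$. Second --- and this is the key move you are missing --- the numerator theta functions are left in \emph{sum} form, $\overline{J}_{a,m}=\sum_k q^{m\binom{k}{2}+ak}$, where their coefficients are manifestly non-negative, while only the denominator is expanded as a product, so that $1/J_1=\prod(1-q^n)^{-1}$ also has non-negative coefficients; the factor $(1-q)^{-1}$ then even gives strict positivity from $q^1$ on. The same device rescues your version: keep the five numerator factors $\overline{J}_{a,64}$ as non-negative sums, cancel $J_{64}^2$ against two of the six copies of $(q^{64};q^{64})_{\infty}$ hiding in the denominator $J_{8,64}^2J_{16,64}J_{24,64}^2J_{32,64}$, and what remains downstairs is $1/(q^{64};q^{64})_{\infty}^{4}$ times reciprocals of $(q^{b};q^{64})_{\infty}$-type products, all with non-negative coefficients. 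Writing all seven numerator copies of $(q^{64};q^{64})_{\infty}$ out explicitly, as you do, is what manufactures the alternating-sign factor you then cannot remove.
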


\begin{proof} [Proof of Proposition \ref{proposition:L-conj}] We recall Theorems \ref{theorem:D8-deviants} and \ref{theorem:DC8-deviants} and identity (\ref{equation:g2-minus}).  Expanding $\overline{J}_{4,8}$, $\overline{J}_{0,8}$, and $J_{4,8}$ with (\ref{equation:jsplit}) gives
\begin{align}
D&(0,8)-D_C(0,8)\label{equation:L-conj-pre}\\
&=2q^{12}g(-q^{12};q^{64})+2\frac{J_{32}\overline{J}_{16,64}^2}{\overline{J}_{52,64}J_{4,32}}
- 2\frac{\overline{J}_{16,32}\overline{J}_{28,64}}{J_4}-q^4\frac{\overline{J}_{0,32}\overline{J}_{28,64}}{J_4}
+ 2 q^4 \frac{ \overline{J}_{8,32}\overline{J}_{52,64}}{J_4}\notag \\
&\ \ \ \ \ +2 q\frac{ \overline{J}_{8,32}\overline{J}_{28,64}}{J_4}
- q^5\frac{ \overline{J}_{0,32}\overline{J}_{20,64}}{J_4} 
- q^{10} \frac{\overline{J}_{0,32}\overline{J}_{60,64}}{J_4} 
+ q^{7} \frac{\overline{J}_{0,32}\overline{J}_{52,64}}{J_4}.\notag
\end{align}
Proving Proposition \ref{proposition:L-conj} is then reduced to proving the four identities:
\begin{subequations}
\begin{equation}
2\frac{J_{32}\overline{J}_{16,64}^2}{\overline{J}_{52,64}J_{4,32}}
- 2\frac{\overline{J}_{16,32}\overline{J}_{28,64}}{J_4}-q^4\frac{\overline{J}_{0,32}\overline{J}_{28,64}}{J_4}
+ 2 q^4 \frac{ \overline{J}_{8,32}\overline{J}_{52,64}}{J_4}
=-2q^8\frac{\overline{J}_{4,64}^2\overline{J}_{20,64}^2\overline{J}_{28,64}J_{64}^2}{J_{8,64}^2J_{16,64}J_{24,64}^2J_{32,64}},
\label{equation:lewis-000}
\end{equation}
\begin{equation}
2 \frac{\overline{J}_{8,32}\overline{J}_{28,64}}{J_4}
- q^4\frac{ \overline{J}_{0,32}\overline{J}_{20,64}}{J_4}
=  2\frac{\overline{J}_{12,64}^2\overline{J}_{20,64}\overline{J}_{28,64}^2J_{64}^2}{J_{8,64}^2J_{16,64}J_{24,64}^2J_{32,64}},
\label{equation:lewis-001}
\end{equation}
\begin{equation}
q^{8} \frac{\overline{J}_{0,32}\overline{J}_{60,64}}{J_4} 
= 2q^8\frac{\overline{J}_{4,64}^2\overline{J}_{12,64}\overline{J}_{20,64}\overline{J}_{28,64}J_{64}^2}{J_{8,64}^2J_{16,64}J_{24,64}^2J_{32,64}},
\label{equation:lewis-002}
\end{equation}
\begin{equation}
q^{4} \frac{\overline{J}_{0,32}\overline{J}_{52,64}}{J_4}
=2q^4\frac{\overline{J}_{4,64}\overline{J}_{12,64}^2\overline{J}_{20,64}\overline{J}_{28,64}J_{64}^2}{J_{8,64}^2J_{16,64}J_{24,64}^2J_{32,64}}.
\label{equation:lewis-003}
\end{equation}
\end{subequations}

We proceed in the order of difficulty.  Identities (\ref{equation:lewis-002}) and (\ref{equation:lewis-003}) are the easiest and are just elementary product rearrangements.

We prove identity (\ref{equation:lewis-001}).  Straightforward product rearrangements yield
\begin{equation}
J_{4}=J_{4,12}
=\frac{J_{8,64}^2J_{16,64}J_{24,64}^2J_{32,64}\overline{J}_{32,128}}
{\overline{J}_{4,64}\overline{J}_{12,64}\overline{J}_{20,64}\overline{J}_{28,64}J_{64}^2}.\label{equation:J4-expansion}
\end{equation}
Using (\ref{equation:J4-expansion}) and simplifying, Identity (\ref{equation:lewis-001}) is then seen to be equivalent to
\begin{equation}
2 \overline{J}_{8,32}\overline{J}_{28,64}
- q^4\overline{J}_{0,32}\overline{J}_{20,64}
=  2\frac{\overline{J}_{12,64}\overline{J}_{28,64}\overline{J}_{32,128}}{\overline{J}_{4,64}},
\end{equation}
which is equivalent to 
\begin{equation}
\overline{J}_{4,64} \overline{J}_{8,32}\overline{J}_{28,64}
- q^4\overline{J}_{4,64}\overline{J}_{20,64}\overline{J}_{32,128}
=  \overline{J}_{12,64}\overline{J}_{28,64}\overline{J}_{32,128}.\label{equation:lewis-001-prefinal}
\end{equation}
Identity (\ref{equation:lewis-001-prefinal}) is easily verified with (\ref{equation:H1Thm1.1}) and product rearrangements:
\begin{align}
\overline{J}_{4,64} \overline{J}_{8,32}\overline{J}_{28,64}
&= \overline{J}_{12,64}\overline{J}_{28,64}\overline{J}_{32,128}
+ q^4\overline{J}_{4,64}\overline{J}_{20,64}\overline{J}_{32,128}\\
&=\overline{J}_{32,128}( \overline{J}_{12,64}\overline{J}_{28,64}
+ q^4\overline{J}_{4,64}\overline{J}_{20,64})
=\overline{J}_{32,128}\overline{J}_{4,32}\overline{J}_{8,32}.\notag 
\end{align}

We prove identity (\ref{equation:lewis-000}).  Using (\ref{equation:J4-expansion}) and simplifying, (\ref{equation:lewis-001}) is then equivalent to 
{\allowdisplaybreaks \begin{align}
&\frac{\overline{J}_{16,32}}{J_{4,32}}\frac{J_{4,16}J_{8,32}\overline{J}_{32,128}}{\overline{J}_{12,64}}
- \overline{J}_{16,32}\overline{J}_{28,64}-q^4\overline{J}_{32,128}\overline{J}_{28,64}
+  q^4 \overline{J}_{8,32}\overline{J}_{52,64}\\
&\ \ \ \ \ =-q^8\frac{\overline{J}_{4,64}\overline{J}_{20,64}\overline{J}_{32,128}}
{\overline{J}_{12,64}},\notag
\end{align}}%
which is equivalent to 
\begin{align}
&\overline{J}_{16,32}J_{4,16}J_{8,32}\overline{J}_{32,128}
- \overline{J}_{16,32}\overline{J}_{28,64}J_{4,32}\overline{J}_{12,64}
-q^4\overline{J}_{32,128}\overline{J}_{28,64}J_{4,32}\overline{J}_{12,64}\label{equation:lewis-000-prefinal-id1}\\
&\ \ \ \ \ +  q^4 \overline{J}_{8,32}\overline{J}_{52,64}J_{4,32}\overline{J}_{12,64}
=-q^8\overline{J}_{4,64}\overline{J}_{20,64}\overline{J}_{32,128} J_{4,32}.\notag
\end{align}

Let us consider the pieces of (\ref{equation:lewis-000-prefinal-id1}).  Using (\ref{equation:H1Thm1.1}) and product rearrangements, we have
\begin{align}
q^4&\overline{J}_{32,128}\overline{J}_{28,64}J_{4,32}\overline{J}_{12,64}
-q^8\overline{J}_{4,64}\overline{J}_{20,64}\overline{J}_{32,128}J_{4,32}
\label{equation:lewis-000-prefinal-piece1}\\
&\ \ \ \ \ =q^4\overline{J}_{32,128}J_{4,32}(\overline{J}_{12,64}\overline{J}_{28,64}
-q^4\overline{J}_{4,64}\overline{J}_{20,64})
=q^4\overline{J}_{32,128}J_{4,32}J_{4,32}J_{8,32}.\notag 
\end{align}
Using product rearrangements, (\ref{equation:1.11}), and (\ref{equation:jsplit}), we obtain
{\allowdisplaybreaks \begin{align}
\overline{J}_{16,32}&J_{4,16}J_{8,32}\overline{J}_{32,128}-q^4\overline{J}_{32,128}J_{4,32}J_{4,32}J_{8,32}\notag \\
&=\overline{J}_{32,128}J_{8,32}
(\overline{J}_{16,32}J_{4,16}-q^4J_{4,32}^2)\notag \\
&=J_{8,64}J_{40,64}
(\overline{J}_{16,32}J_{4,16}-q^4J_{4,32}^2)\notag \\
&=J_{8,64}J_{40,64}
\Big (\frac{J_{4,32}^2}{J_{8,64}}\frac{\overline{J}_{4,32}J_{20,32}}{J_{16,64}}-q^4J_{4,32}^2\Big )\notag \\
&=J_{8,64}J_{40,64}
\Big (\frac{J_{4,32}^2}{J_{8,64}}j(-q^4;-q^{16})-q^4J_{4,32}^2\Big )
=J_{4,32}^2J_{24,64}^2.\label{equation:lewis-000-prefinal-piece2}
\end{align}}%
Using (\ref{equation:lewis-000-prefinal-piece1}), (\ref{equation:lewis-000-prefinal-piece2}), and factoring out a $J_{4,32}$, identity (\ref{equation:lewis-000-prefinal-id1}) is then equivalent to 
\begin{equation}
J_{4,32}J_{24,64}^2- \overline{J}_{16,32}\overline{J}_{28,64}\overline{J}_{12,64}+  q^4 \overline{J}_{8,32}\overline{J}_{52,64}\overline{J}_{12,64}=0.
\end{equation}
which is equivalent to
\begin{equation}
J_{24,64}^2J_{4,64}J_{36,64}
+  q^4 \overline{J}_{12,64}^2\overline{J}_{8,64}\overline{J}_{40,64}
=\overline{J}_{12,64}\overline{J}_{28,64}\overline{J}_{16,64}\overline{J}_{48,64},\label{equation:last-stop}
\end{equation}
where we have used (\ref{equation:1.10}) and factored out common terms.   Identity (\ref{equation:last-stop}) follows from (\ref{equation:Weierstrass}) with $q\mapsto q^{64}$ $a\mapsto -q^{32}$, $b\mapsto q^{20}$, $c\mapsto q^{16}$, $d\mapsto -q^{8}$.
\end{proof}

\begin{proof}[Proof of Proposition \ref{proposition:L-conj-ineq}]
The proofs for the three inequalities are the same, so we will only do the third one.  Instead of using Proposition \ref{proposition:L-conj}, we use the equivalent form identity (\ref{equation:L-conj-pre}), which gives
\begin{equation}
\sum_{n=0}^{\infty}\Big ( N(0,8;4n+3)-C(0,8;4n+3)\Big )q^n=q\cdot \frac{\overline{J}_{0,8}\overline{J}_{13,16}}{J_1}
=:\sum_{k=1}^{\infty}c(n)q^{n}.\label{equation:the-end}
\end{equation}
In sum form, the two theta functions in the numerator can be written
\begin{equation}
\overline{J}_{a,m}:=j(-q^a;q^m)=\sum_{n=-\infty}^{\infty}(-1)^nq^{m\binom{n}{2}}(-q^{a})^n
=\sum_{k=-\infty}^{\infty}q^{m\binom{k}{2}}q^{ak},
\end{equation}
which has only positive Fourier cofficients.  In product form, the denominator reads
\begin{equation}
J_1=J_{1,3}=\prod_{n=1}^{\infty}(1-q^n).
\end{equation}
By the geometric series $(1-q^n)^{-1}=\sum_{k\ge0}q^{nk}$, the denominator contributes only positive Fourier coefficients.  Given the lead factor $(1-q)^{-1}$, we know that every Fourier coefficient $c(n)$, $n\ge1$, of (\ref{equation:the-end}) will be strictly positive.
\end{proof}

\section*{Acknowledgements}
We would like to thank George Andrews for pointing out references \cite{LSG, NSG2}, Frank Garvan for pointing out reference \cite{L1}, and Bruce Berndt for helpful comments.

\end{document}